\newcommand{\C}{\mathbb C}
\newcommand{\E}{\mathbb{E}}
\newcommand{\N}{\mathbb{N}}
\newcommand{\Z}{\mathbb{Z}}
\newcommand{\R}{\mathbb{R}}
\newcommand{\1}{\mathbbm{1}}
\renewcommand{\Im}{\operatorname{Im}}
\renewcommand{\Re}{\operatorname{Re}}
\renewcommand{\i}{\mathrm{i}}
\newcommand{\erf}{\operatorname{erf}}
\newcommand{\erfc}{\operatorname{erfc}}
\newcommand{\refr}{\mathrm{refr}}
\newcommand{\loc}{\mathrm{loc}}
\numberwithin{equation}{section}
\newtheorem{corollary}{Corollary}[section]
\newtheorem{lemma}[corollary]{Lemma}
\newtheorem{proposition}[corollary]{Proposition}
\newtheorem{theorem}[corollary]{Theorem}
\newenvironment{assumption}[1]
{\taggedassumptionx}
{\endtaggedassumptionx}
\theoremstyle{definition}
\theoremstyle{remark}
\newtheorem{example}[corollary]{Example}
\newtheorem{remark}[corollary]{Remark}
\begin{document}
\begin{frontmatter}

% "Title of the Paper"
\title{Spectral analysis of the zigzag process}
%\atltitle{}

\runtitle{Spectral analysis of the zigzag process}

\begin{aug}
% indicate corresponding author with \corref{}
% \author{\fnms{John} \snm{Smith}\thanksref{a}\corref{}\ead[label=e1]{smith@foo.com}\ead[label=e2,url]{www.foo.com}}
% \address[a]{\printead{e1};\printead{e2}}

\author{\fnms{Joris} \snm{Bierkens}\thanksref{a}
\corref{}
\ead[label=e1]{joris.bierkens@tudelft.nl}
}
\and
\author{\fnms{Sjoerd M.} \snm{Verduyn Lunel}\thanksref{b}\ead[label=e2]{S.M.VerduynLunel@uu.nl}}

\address[a]{Delft University of Technology \\ Delft Institute of Applied Mathematics \\ \printead{e1}}

\address[b]{Utrecht University \\ Department of Mathematics \\ \printead{e2}}

\thankstext{a}{Joris Bierkens acknowledges funding by the research programme `Zig-zagging through computational barriers' with project number 016.Vidi.189.043, which is financed by the Netherlands Organisation for Scientific Research (NWO).}

\runauthor{J. Bierkens, S. M. Verduyn Lunel}

\affiliation{Delft Institute of Applied Mathematics and Utrecht University}

\end{aug}

\begin{abstract}
		The zigzag process is a variant of the telegraph process with position dependent switching intensities. A characterization of the $L^2$-spectrum for the generator of the one-dimensional zigzag process is obtained in the case where the marginal stationary distribution on $\R$ is unimodal and the refreshment intensity is zero. Sufficient conditions are obtained for a spectral mapping theorem, mapping the spectrum of the generator to the spectrum of the corresponding Markov semigroup. 
	Furthermore results are obtained for symmetric stationary distributions and for perturbations of the spectrum, in particular for the case of a non-zero refreshment intensity. In the examples we consider (including a Gaussian target distribution) a slight increase of the refreshment intensity above zero results in a larger $L^2$-spectral gap, corresponding to an improved convergence in $L^2$.
\end{abstract}

\begin{abstract}[language=french]
Le processus de ZigZag est une variante du processus du t\'{e}l\'{e}graphe avec des intensit\'{e}s de retournement d\'{e}pendant de la position.  Nous obtenons une caract\'{e}risation du spectre en norme $L^2$ du g\'{e}n\'{e}rateur du processus de zigzag unidimensionnelle, dans le cas o\`{u} la distribution marginale stationnaire sur ${\mathbb R}$ est unimodale, avec une fr\'{e}quence de r\'{e}\'{e}chantillonage nulle. Nous obtenons des conditions suffisantes pour un th\'{e}or\`{e}me d'isomorphisme spectral, identifiant le spectre du g\'{e}n\'{e}rateur \`{a} celui du semi-groupe de Markov correspondant. Par ailleurs, nous obtenons des r\'{e}sultats pour les distributions stationnaires sym\'{e}triques ainsi que pour les perturbations du spectre dans le cas particulier d'une fr\'{e}quence de r\'{e}\'{e}chantillonage non nulle. Enfin, nous consid\'{e}rons dans les exemples (avec une distribution cible gaussienne) un faible taux de rafraichissement positif par rapport aux r\'{e}sultats du taux nul, ce qui induit une plus grande bande dans le spectre $L^2$, correspondant \`{a} une convergence am\'{e}lior\'{e}e en norme $L^2$.
\end{abstract}

\begin{keyword}[class=MSC2020]
\kwd[Primary ]{37A30}
\kwd{47A10} \kwd[; secondary ]{60J25}
% 60J25 Markov processes on general state spaces
% 47A10 Spectrum, resolvent
% 47D07 Markov semigroups and applications to diffusion processes
% 37A30 Ergodic theory, spectral theory
% 65C05
\end{keyword}

\begin{keyword}
\kwd{spectral theory}
\kwd{non-reversible Markov process}
\kwd{Markov semigroup}
\kwd{zigzag process}
\kwd{exponential ergodicity}
\kwd{perturbation theory}
\kwd{telegraph process}
\kwd{piecewise deterministic Markov process}
\end{keyword}

% history:
% \received{\smonth{1} \syear{0000}}

%\tableofcontents

\end{frontmatter}

\section{Introduction}

In recent years piecewise deterministic Markov processes have emerged as a useful computational tool in stochastic simulation, for example for Bayesian statistics or statistical physics. A particular instance of such a process is the \emph{zigzag process}, described already in e.g. \cite{Fontbona2012,Monmarche2016} and given its name in \cite{BierkensRoberts2017}. The zigzag process is a variant of, and extends the \emph{telegraph process} \cite{Kac1951} and is intimately related to the \emph{Bouncy Particle Sampler} \cite{PetersDeWith2012,BouchardCoteVollmerDoucet2017}.

The zigzag process can be defined in multiple dimensions \cite{BierkensFearnheadRoberts2016} but here we consider only one spatial dimension. In this setting the zigzag process is a Markov process $(X(t), \Theta(t))$ in the state space $E := \R\times \{-1,+1\}$. Conditional on the \emph{velocity process} $(\Theta(t))_{t \geq 0}$, the \emph{position process} $(X(t))_{t \geq 0}$ in $\R$ is completely determined by the relation $X(t) = X(0) + \int_0^t \Theta(s) \, ds$. The velocity process $(\Theta(t))_{t \geq 0}$ in $\{-1,+1\}$ switches sign at inhomogeneous Poisson rate $\lambda(X(t),\Theta(t))$, where $\lambda : E \rightarrow [0,\infty)$ is a continuous function. Thus the \emph{switching intensity} function $\lambda$ has a crucial impact on the dynamics exhibited by the zigzag process.

For a given absolutely continuous \emph{potential function} $U : \R\rightarrow \R$, the switching intensity $\lambda$ can be chosen as $\lambda(x,\theta) = (\theta U'(x) \vee 0) + \lambda_{\refr}(x)$ for some non-negative \emph{refreshment intensity} function $\lambda_{\refr}:\R\rightarrow [0,\infty)$. This condition results in the zigzag process having stationary distribution $\mu$ on $E$ with marginal density on $\R$ proportional to $e^{-U}$. This observation makes the zigzag process into a useful computational device for the stochastic simulation of a given target density proportional to $e^{-U}$.

In the design of stochastic simulation algorithms it is vital to understand the convergence to stationarity, i.e., the quality of the approximation of the probability density proportional to $e^{-U}$ by the law of $X(t)$. For the zigzag process this question has already been addressed in several ways. In \cite{Fontbona2015,BierkensRoberts2017} a Lyapunov function argument has been used to show exponentially fast convergence to stationarity in total variation norm. In \cite{BierkensDuncan2016} a central limit theorem for $\frac 1 {\sqrt{T}} \int_0^T (f(X(s), \Theta(s))  - \pi(f)) \, ds$, for $T \rightarrow \infty$, was obtained under suitable conditions. Here $\pi$ denotes the normalization of $\mu$ into a probability distribution. The hypocoercivity approach of \cite{Dolbeault2015} is applied in \cite{Andrieu2018} to the zigzag process, amongst other piecewise deterministic processes, in order to obtain exponential convergence in $L^2(\mu)$. Recently \cite{Lu2020} provided estimates on the exponential convergence of PDMPs using a time-dependent Poincar\'e inequality, that seem to result in a good understanding of dependence on, e.g., dimensionality, anisotropy and convexity. However, typically these results either do not obtain a tractable  quantitative bound on the rate of convergence, and/or provide only a one-sided estimate on the rate of convergence.

In order to obtain quantitative result on the rate of convergence to stationarity, it is natural to investigate the spectral properties of the Markov semigroup $(P(t))_{t \geq 0}$ connected to the zigzag process, and this is the aim of this work. The probabilistic interpretation of $P(t)$ is that
\[ P(t) f(x,\theta) = \E_{x,\theta} f(X(t),\Theta(t)), \quad t \geq 0,\]
where $\E_{x,\theta}$ denotes the expectation over trajectories of the Markov process $(X(t),\Theta(t))$ with initial position $(X(0), \Theta(0)) = (x,\theta)$. In this paper we take a fully analytical approach, and thus construct and analyse the semigroup $(P(t))_{t \geq 0}$ using functional analytical arguments with no reliance on probabilistic results.

In a similar vein in \cite{MicloMonmarche2013} a rather complete understanding of the spectral properties in $L^2(\mu)$ is obtained for a periodic variant of the telegraph process, with uniform marginal stationary distribution on the position space. As in \cite{Andrieu2018, MicloMonmarche2013} we consider the spectrum of the zigzag semigroup in the Hilbert space $L^2(\mu)$. Although the zigzag semigroup is not selfadjoint, it still has remarkable symmetry properties as a semigroup in $L^2(\mu)$. In particular we obtain an  elegant characterization of the adjoint of the generator (see Lemma~\ref{lem:adjoint}), and a decomposition of the spectrum in the case of a symmetric stationary distribution (Section~\ref{sec:symmetric}).

Our results rely for a significant part on the explicit solution of the eigenvalue and resolvent equations, which is possible if $\lambda_{\refr} = 0$. We anticipate that the results can be extended to $\lambda_{\refr} \neq 0$ and to a wider variety of function spaces, using an approach based upon characteristic matrices \cite{Kaashoek1992}, which will be the focus of further research.

The structure of this work is as follows. First, in Section~\ref{sec:notation-assumptions} we introduce notation and our main assumptions which are used throughout this document. In Section~\ref{sec:semigroup} we construct, imposing minimal assumptions and taking a purely analytical approach, the zigzag semigroup $(P(t))_{t \geq 0}$ on $L^2(\mu)$, where we characterize the domain of the infinitesemal generator $L$ explicitly (Theorem~\ref{thm:semigroup}). Under mild conditions it is established that the resolvent of $(P(t))_{t \geq 0}$ is compact with immediate strong implications for the spectrum.
Then, in Section~\ref{sec:generatorspectrum} the spectrum of the zigzag semigroup is studied under the assumption of unimodality of $e^{-U}$ and vanishing refreshment, $\lambda_{\refr}(x) = 0$ for all $x$. It turns out that in this case we can find a holomorphic function $Z : \C\rightarrow \C$ such that the spectrum of $L$ is identical to the roots of $Z$ (Theorem~\ref{thm:spectrum}). Under a further suitable polynomial growth condition on the potential function, a spectral mapping theorem is shown to hold, so that the spectrum of $(P(t))_{t \geq 0}$ can expressed in terms of the roots of $Z$ (Theorem~\ref{thm:spectralmapping}), yielding in particular a characterization of the spectral gap of $(P(t))_{t \geq 0}$ in $L^2(\mu)$ (Theorem~\ref{thm:decay}). 

Interestingly, if $U$ is symmetric, as discussed in Section~\ref{sec:symmetric}, then the state space $L^2(\mu)$ can be separated into two subspaces that are invariant with respect to the zigzag semigroup. In particular this implies a decomposition of the spectrum (Theorem~\ref{thm:spectrum-symmetric}).
In Section~\ref{sec:perturbation} we determine the effect of small perturbations of the generator on the spectrum of the zigzag semigroup, allowing us to study the effect of a small non-zero refreshment intensity. 

In Section~\ref{sec:examples} some fundamental examples are discussed along with a numerical illustration of the spectrum. Interestingly, in the examples we consider (including a Gaussian target distribution) a slight increase above zero in the zigzag refreshment intensity results in a larger spectral gap and thus an improved convergence in $L^2$. In contrast, the asymptotic variance increases when the refreshment rate increases \cite{BierkensDuncan2016}. This apparent contradiction between convergence in law and convergence of empirical averages is also observed in reversible, nearly-periodic Markov chains \cite{Rosenthal2003} and non-reversible Markov chains \cite{Vialaret2020}.

Finally we include an appendix containing several technical lemmas.

\section{Notation and assumptions}
\label{sec:notation-assumptions}

Let $E = \R \times \{-1,+1\}$ and equip $E$ with the product topology generated by the usual topologies on $\R$ and $\{-1,+1\}$. We also equip $E$ with the product $\sigma$-algebra generated by products of Lebesgue sets in $\R$ and all subsets of $\{-1,+1\}$.
% Write $M(E)$ for the measurable functions on $E$ and $B(E)$ for the bounded measurable functions on $E$.

Associated to a \emph{potential function} $U : \R \rightarrow \R$,  which we assume to be continuously differentiable, we define the \emph{switching intensity} function
\begin{equation}\label{eq:switching-rate} \lambda(x,\theta) := (\theta U'(x) \vee 0) + \lambda_{\refr}(x), \quad (x,\theta) \in E.\end{equation}
Here $(a \vee b) := \max(a,b)$ for any $a, b \in \R$, and $\lambda_{\refr} : \R \rightarrow [0,\infty)$ is a continuous function. The function $\lambda_{\refr}$ is referred to as the \emph{refreshment rate} or \emph{excess switching rate}. 
% Throughout this paper we will assume the following condition without further comment.
% 
% \begin{assumption}{A-min}[Minimal assumptions]
% 	\label{ass:construction}
% 	$U$ is continuously differentiable and $\lambda_{\refr}$ is continuous.
% \end{assumption}
In order to obtain the main results in this paper we will often assume that $\lambda_{\refr}(x) = 0$ for all $x \in \R$; this assumption will always be made explicit.

The switching intensities $\lambda$ define (under suitable conditions) a Markov process in $E$, with associated Markov semigroup $(P(t))_{t \geq 0}$ on $B(E)$, the Banach space of bounded measurable functions on $E$. We call $(P(t))$ the \emph{zigzag semigroup}.  For a probabilistic construction see, e.g., \cite{BierkensFearnheadRoberts2016,BierkensRoberts2017}.

It is the aim of this paper to carry out a detailed investigation of properties of this semigroup as a strongly continuous semigroup in $L^2(\mu)$, where $\mu$ denotes a stationary distribution of the process (to be made precise later). We will be particularly interested in spectral properties of the semigroup.

\subsection{Notation}

For a measure $\mu$ on a measurable space $E$ we write $\mu(f) = \int_E f \, d \mu$, for $f \in L^1(\mu)$. Let $E = \R \times \{-1,+1\}$. For $f : E \rightarrow \C$ we write $f^{\theta}  = f(\cdot, \theta)$, $\theta = \pm 1$.  
Recall the Sobolev spaces $W^{k,p}(\Omega, \nu)$ for $\Omega$ an open subset of $\R$ and $\nu$ a Borel measure on $\Omega$ and write $W^{k,p}(\Omega)$ when the underlying measure is Lebesgue measure.
Let $\mu$ denote a Borel measure on $E$. 
We say that $f \in W^{k,p}(\mu)$ if $f(\cdot, \theta) \in W^{k,p}(\R,\mu)$ for $\theta \in \{-1,+1\}$. Similarly we say that $f \in W^{k,p}_{\loc}(E)$ if, for any $\theta \in \{-1,+1\}$, $f(\cdot, \theta)$ restricted to $F$ is in $W^{k,p}(F)$ for any compact set $F \subset \R$.
Mappings $M$ of functions $f : \R\rightarrow \C$ yielding a function $M f : \R\rightarrow \C$ may be extended to mappings of functions $f : E \rightarrow \C$ by defining $(M f)^{\theta} = Mf (\cdot, \theta) = M f^{\theta}$ for $\theta = \pm 1$. For example, 
for $f \in W^{1,p}(E)$ we write $\partial_x f :  E \rightarrow \R$ for the function such that $(\partial_x f)^{\theta} = \partial_x (f^{\theta})$ for $\theta = \pm 1$.
Similarly a function space of functions $f : \R \rightarrow \R$ is readily extended to functions $f : E \rightarrow \R$. For example, $C_c^{\infty}(E)$ denotes the space of all functions $f : E \rightarrow \C$ such that $f^{\theta} \in C_c^{\infty}(\R)$ for $\theta = \pm 1$.

\subsection{Assumptions on the potential function}
\label{sec:assumptions}
We introduce several different assumptions on the potential function $U$ for later reference.
See Section~\ref{sec:examples} for the verification of these conditions for a specific family of potential functions.
% . In all cases, the switching intensities which define the zigzag process are defined by 

% Assumption~\ref{ass:construction} holds in particular if $U$ is continuously differentiable and $\lambda_{\refr}$ is continuous.
% 
% We will rely upon various specific conditions throughout this paper. 

% We will now list several more precise assumptions occurring in this paper.

\begin{assumption}{A1}
	\label{ass:sobolev}
	% Suppose $U \in C^2(\R)$, $e^{-U}$ is integrable, $\lim_{|x| \rightarrow \infty} |U'(x)| = + \infty$ and, for some $\delta_1 \in (0,1)$, $\delta_2 > \delta_1 - 1$, $M_1 > 0$ and $M_2 \in \R$,
	% \[ \delta_2 |U'(x)|^2 + M_2 \leq  U''(x) \leq \delta_1 |U'(x)|^2 + M_1, \quad x \in \R.\]
	$U \in C^2(\R)$, $e^{-U} \in L^1(\R)$ and, for some $\delta \in (0,1)$, $m \in \R$, and $M > 0$,
	\[ m \leq  U''(x) \leq \delta |U'(x)|^2 + M, \quad \text{$x \in \R$.}\]
	Furthermore there are constants $c_1, c_2 \geq 0$ such that $\lambda_{\refr}(x) \leq c_1 + c_2 |U'(x)|$ for all $x \in \R$.
\end{assumption}

\begin{assumption}{A2}
	\label{ass:compactness}
	% Suppose $U \in C^2(\R)$, $e^{-U}$ is integrable, $\lim_{|x| \rightarrow \infty} |U'(x)| = + \infty$ and, for some $\delta_1 \in (0,1)$, $\delta_2 > \delta_1 - 1$, $M_1 > 0$ and $M_2 \in \R$,
	% \[ \delta_2 |U'(x)|^2 + M_2 \leq  U''(x) \leq \delta_1 |U'(x)|^2 + M_1, \quad x \in \R.\]
	$\lim_{x \rightarrow \infty} U'(x) = + \infty$ and $\lim_{x \rightarrow -\infty} U'(x) = -\infty$.
\end{assumption}

% We remark that Assumptions~\ref{ass:sobolev} and~\ref{ass:compactness} combined imply a Poincar\'e inequality; see \cite[Theorem 8.6.2]{LorenziBertoldi2007}.

%\begin{assumption}{A3}[Strong Integrability]
%	\label{ass:strong-integrability}
%	For any $\alpha \in \R$,
%	\begin{equation} \label{eq:integrability}
%	\int_{\R} \exp(-\alpha x - U(x)) \, d x < \infty.
%	\end{equation}
%\end{assumption}

\begin{assumption}{A3}[Unimodality]
	\label{ass:unimodal}
	$U(0) = 0$, $U'(x) \leq 0$ for $x \leq 0$ and $U'(x) \geq 0$ for $x \geq 0$.
\end{assumption}

% \begin{assumption}{A4}[No refreshment]
% 	\label{ass:norefresh}
% 	$\lambda_{\refr}(x) = 0$ for all $x \in \R$.
% \end{assumption}

\begin{assumption}{A4}[Growth condition]
	\label{ass:polynomial-tail}
	There are constants $C \leq 0$, $M \geq 0$, $m > 0$ and $p > 1$ such that 
	\begin{equation}
	\label{eq:polynomial-tail} U(y) \geq U(x) + C + m|x-y|^p \quad \text{for all $x, y$ for which  $M \leq x \leq y$ or $y \leq x \leq -M$.}
	\end{equation}
\end{assumption}

Note that for continuously differentiable $U$,  Assumption~\ref{ass:polynomial-tail} cannot hold for $C > 0$. Indeed, if~\eqref{eq:polynomial-tail} would hold with  $C > 0$ then for all $x \geq M$
\[ U'(x) = \lim_{h \downarrow 0} \frac{U(x+h) - U(x)}{h} \geq \lim_{h \downarrow 0}\frac C h + m h^{p-1} = +\infty. \]
\begin{remark}
If Assumption~\eqref{eq:polynomial-tail} holds for some $M > 0$, then it also holds for $M = 0$ (for a different value of $m$), which simplies proofs later on. This result is delegated to the appendix, see Lemma~\ref{lem:polynomial-tail}.
\end{remark}

Examples of potential functions for which Assumption~\ref{ass:polynomial-tail} is satisfied are given by $U(x) = |x|^p$ for $p > 1$, with $C = 0$ and $m = 1$. This can be seen by defining, for fixed $x \geq 0$, $f(y) := |y|^p - |x|^p - |y-x|^p$ and noting that $f'(y) \geq 0$ for $y \geq x$.

\begin{remark}
For convenience of the reader we provide the following lemma which gives an easily verifiable condition for Assumption~\ref{ass:polynomial-tail} to hold.
% remark that the following Assumption~\ref{ass:m-convex} is more intuitive and easier to verify than Assumption~\ref{ass:polynomial-tail}. Lemma~\ref{lem:implications-m-convexity} 
%  establishes that Assumption~\ref{ass:m-convex} implies Assumption~\ref{ass:polynomial-tail}.
The reverse implication does not hold: see Lemma~\ref{lem:assumptions-family} for an example where the conditions of Lemma~\ref{lem:implications-m-convexity} are not satisfied whereas Assumptions~\ref{ass:compactness} and~\ref{ass:polynomial-tail} are satisfied.
\end{remark}

%We remark that Assumption~\ref{ass:compactness} and Assumption~\ref{ass:polynomial-tail} are satisfied if the following assumption holds.

% \begin{assumption}{A6}[$m$-strong convexity outside a compact set]
% 	\label{ass:m-convex}
% \end{assumption}

\begin{lemma}
	\label{lem:implications-m-convexity}
	Suppose $U \in C^2(\R)$ and $U''(x) \geq m$ for all $|x| \geq M$, where $m > 0$ and $M \geq 0$.
 Then Assumptions~\ref{ass:compactness} and~\ref{ass:polynomial-tail} hold.
\end{lemma}
\begin{proof}
The implication of Assumption~\ref{ass:compactness} is immediate.
	If the assumptions of the lemma are satisfied then $U'(x) \geq 0$ for $x$ sufficiently large (say for $x \geq \widetilde M \geq M$). By integrating it follows that for $y \geq x \geq \widetilde M$ and $\widetilde m = m/2$ we have that 
	\[ U(y) \geq U(x) + U'(x) (y-x) +  \widetilde m  (y-x)^2 \geq U(x) + \widetilde m  (y-x)^2.\]
	We may use an analogous argument for $y \leq x \leq -M$ to obtain Assumption~\ref{ass:polynomial-tail} for $C = 0$ and $p = 2$. 
\end{proof}

In Section~\ref{sec:symmetric} we will investigate the implications of the following symmetry assumption.

\begin{assumption}{A5}[Symmetry]
	\label{ass:symmetric}
	$U(x) = U(-x)$ for $x \in \R$.
\end{assumption}

\begin{remark}
 For convenience of the reader we remark that the following list of assumptions implies all conditions mentioned so far, but is somewhat stronger than necessary. In particular (iii) and (v) are stronger than necessary for most of the results.

 	\begin{itemize}
		\item[(i)] $U \in C^2(\R)$.
		\item[(ii)]  For some $\delta \in (0,1)$, and $M > 0$, 
		\[ U''(x) \leq \delta |U'(x)|^2 + M, \quad x \in \R.\]
		\item[(iii)] Symmetry: $U(x) = U(-x)$ for $x \in \R$.
		\item[(iv)] Unimodality: $U(0) = 0$ and $U'(x) \geq 0$ for $x \geq 0$.
		\item[(v)] $m$-strong convexity: $U''(x) \geq m$ for all $|x| \geq M$, where $m > 0$ and $M \geq 0$.
	\end{itemize}
In particular note that $e^{-U} \in L^1(\mathbb R)$ is implied by (v).
\end{remark}

\begin{example}[Gaussian distribution]
	Suppose $U(x) = \tfrac {x^2}{2 \sigma^2}$ with $\sigma> 0$. Then Assumptions~\ref{ass:sobolev}, \ref{ass:compactness}, \ref{ass:unimodal}, \ref{ass:polynomial-tail} and \ref{ass:symmetric} are satisfied. Here Assumption~\ref{ass:polynomial-tail} follows, e.g., by Lemma~\ref{lem:implications-m-convexity}.  We will study a more general family of distributions, including the Gaussian distribution, in Section~\ref{sec:examples}. 
\end{example}

\begin{example}[$t$-distribution]
	Suppose $U(x) = \left(\frac{\alpha+1}{2} \right) \log \left( 1 + \frac {x^2}{\alpha} \right)$, with $\alpha > 0$. In this case the probability measure on $\mathbb R$ with  density proportional to $e^{-U}$ is the $t$-distribution with $\alpha$ degrees of freedom. In particular if $\alpha = 1$ this is the Cauchy distribution.  Then $U'(x) = \frac{(\alpha+1)x}{\alpha + x^2}$ and $U''(x) = \frac{(\alpha + 1)(\alpha - x^2)}{(x^2 + \alpha)^2}$.   Since $U''$ is a bounded function for any $\alpha > 0$, Assumption~\ref{ass:sobolev} is satisfied. However, $|U'(x)| \rightarrow 0$ as $|x| \rightarrow \infty$ for any $\alpha > 0$, so that Assumption~\ref{ass:compactness} is not satisfied. Assumptions~\ref{ass:unimodal} and \ref{ass:symmetric} are trivially satisfied; Assumption~\ref{ass:polynomial-tail} is not satisfied.
\end{example}

\section{The zigzag semigroup in $L^2(\mu)$}
\label{sec:semigroup}

In this section we take a purely functional analytic approach to the construction of the zigzag semigroup in $L^2(\mu)$. That is, we do not refer to the probabilistic construction of the process as carried out, e.g., in \cite{BierkensRoberts2017}. This construction is discussed in Section~\ref{sec:construction}. Next, we establish that under reasonable conditions (i.e. Assumptions~\ref{ass:sobolev}, \ref{ass:compactness}) the semigroup has a compact resolvent, which is important in the characterization of the spectrum from Section~\ref{sec:generatorspectrum} onwards.

\subsection{Construction of the zigzag semigroup}
\label{sec:construction}

% For the construction of the semigroup, we only assume Assumption~\ref{ass:construction}, without further mention. 
Define measures $\nu$ on $\R$ and $\mu$ on $E$, respectively by 
\[ \nu(A) = \int_A e^{-U(x)} \, d x, \quad \text{and} \quad  \mu(A \times \{ \theta\}) = \nu(A), \quad A \in \mathcal B(\R), \, \theta \in \{-1,+1\}.\]
At this point there is no requirement that $e^{-U}$ be integrable. Therefore the measure $\mu$ is not necessarily finite, but as a consequence of the continuity of $U$ it is $\sigma$-finite. 
% \todo{Is it true that we do not need integrability? Very likely that somewhere we are using this.}
We consider the complex Hilbert space $L^2(\mu)$ with inner product $\langle \cdot, \cdot \rangle$ and norm $\|\cdot\|$. The space $L^2(\mu)$ is separable by \cite[p. 36]{Davies2007}.
%Throughout this section we consider $p \geq 1$ to be fixed.
Define a linear operator $(L, \mathcal D(L))$ on the domain
\begin{equation}
\label{eq:domain}
\mathcal D(L) = \{ f \in L^2(\mu) \cap W^{1,2}_{\loc}(E) : (x,\theta) \mapsto \theta \partial_x f(x,\theta) + \lambda(x,\theta) (f(x,-\theta) - f(x,\theta)) \in L^2(\mu) \}.
\end{equation}
by
\begin{equation}
\label{eq:generator}
L f(x,\theta) =  \theta \partial_x f(x,\theta) + \lambda(x,\theta) (f(x,-\theta) - f(x,\theta)).
\end{equation}
We introduce the \emph{flipping operator} $F f(x,\theta) = f(x,-\theta)$ and we will often use the shorthand notation $L f = \theta \partial_x f + \lambda (F f - f)$.

The aim of this section is to establish that $L$ is the generator of a  semigroup $(P(t))_{t \geq 0}$ in $L^2(\mu)$, which we will refer to as the \emph{zigzag semigroup}. In order to do so, we will first show that $L$ is a closed dissipative operator (Lemmas~\ref{lem:closed},~\ref{lem:dissipative}), determine a core for $L$ (Lemma~\ref{lem:core}), and next 
characterize the adjoint of $L$, finding essentially that $L^{\star} = FLF$ (Lemmas~\ref{lem:adjoint},~\ref{lem:adjoint-symmetry}).
These results then immediately imply the wellposedness of the Cauchy problem, i.e., the construction of the semigroup $P(t)$, see Theorem~\ref{thm:semigroup}.

\begin{lemma}
	\label{lem:closed}
	The operator $(L, \mathcal D(L))$ is closed.
\end{lemma}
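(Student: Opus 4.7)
The plan is to verify closedness directly from the definition: suppose $f_n \in \mathcal D(L)$ with $f_n \to f$ in $L^2(\mu)$ and $Lf_n \to g$ in $L^2(\mu)$, and show that $f \in \mathcal D(L)$ and $Lf = g$. The only nontrivial membership condition to verify is $f \in W^{1,2}_{\loc}(E)$, since $f \in L^2(\mu)$ is immediate and the $L^2(\mu)$-condition on $Lf$ in the definition of $\mathcal D(L)$ will follow from $Lf = g$ once we have the weak derivative.

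The key algebraic observation is that on $\mathcal D(L)$ one may rearrange
\[ \theta \partial_x f_n = L f_n - \lambda(F f_n - f_n). \]
Fix any open interval $F \subset \R$ with compact closure. Since $U$ is continuous, $e^{-U}$ is bounded above and below by positive constants on $F$, so the $L^2(\mu)$ and $L^2(F,dx)$ norms are equivalent on functions supported in $F$. In particular $f_n \to f$ and $Lf_n \to g$ in $L^2(F \times \{\pm 1\}, dx)$. By Assumption~\ref{ass:construction}, $\lambda(\cdot, \theta)$ is bounded on $F$, hence $\lambda(Ff_n - f_n) \to \lambda(Ff - f)$ in $L^2(F \times \{\pm 1\}, dx)$. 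Combining, $\theta \partial_x f_n$ converges in $L^2(F, dx)$ for each $\theta = \pm 1$ to $h := g - \lambda(Ff - f)$.

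Now I would invoke the standard fact that distributional differentiation is continuous from $\mathcal D'(F)$ to $\mathcal D'(F)$, and that $L^2(F)$-convergence implies convergence in $\mathcal D'(F)$. Applied to $f_n^\theta \to f^\theta$ in $L^2(F)$, this gives $\partial_x f_n^\theta \to \partial_x f^\theta$ in $\mathcal D'(F)$; by uniqueness of distributional limits, $\partial_x f^\theta = \theta h^\theta \in L^2(F)$. Since $F$ was arbitrary, $f \in W^{1,2}_{\loc}(E)$. Then
\[ Lf = \theta \partial_x f + \lambda(Ff - f) = h + \lambda(Ff-f) = g \in L^2(\mu), \]
so $f \in \mathcal D(L)$ and $Lf = g$, proving closedness.

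The main step that requires a bit of care is the passage from global $L^2(\mu)$-convergence to local $L^2$-convergence with respect to Lebesgue measure — this is where the continuity of $U$ and the local boundedness of $\lambda$ (Assumption~\ref{ass:construction}) are used. Everything else reduces to the closedness of the weak derivative operator on $L^2_{\loc}$, which is a standard consequence of distribution theory and is the only ``analytic'' input needed.
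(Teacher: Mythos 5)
Your proof is correct and takes essentially the same approach as the paper's: rearrange $\theta\partial_x f_n = Lf_n - \lambda(Ff_n-f_n)$, use local boundedness of $\lambda$ and continuity of $U$ to pass from $L^2(\mu)$-convergence to local $L^2$-convergence of the derivatives, and conclude via closedness of the distributional derivative. Your write-up is, if anything, slightly more explicit than the paper about why the local convergence holds before testing against $C_c^\infty$ functions.
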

\begin{proof}
	Let $(f_n) \subset \mathcal D(L)$ be a converging sequence in $L^2(\mu)$ with $\lim_{n \rightarrow \infty} f_n = f \in L^2(\mu)$, and suppose $(g_n) := (L f_n)$ converges in $L^2(\mu)$ to $g$.
	Note that \begin{equation} \label{eq:dfnx} \partial_x f_n(x,\theta) = \theta \Big(g_n(x,\theta) - \lambda(x,\theta) (f_n(x,-\theta) - f_n(x,\theta))\Big)\end{equation} and define in similar spirit 
	\[ h(x,\theta) := \theta \Big( g(x,\theta) - \lambda(x,\theta) (f(x,-\theta) - f(x,\theta)) \Big).\] We will first show that $f \in W^{1,2}_{\loc}(E)$ and that $\partial_x f = h$. To this end let $\phi \in C_c^{\infty}(E)$. Then
	\begin{align*} -\int_E f \partial_x \phi \, d x & = -\int_E \left(\lim_{n \rightarrow \infty} f_n \right) \partial_x \phi \, d x = -\lim_{n \rightarrow \infty} \int_E f_n \partial_x \phi \, d x \\
	& =\lim_{n \rightarrow \infty} \int_E \left(\partial_x f_n\right) \phi \, d x = \int_E \left(\lim_{n \rightarrow \infty} \partial_x f_n \right) \phi \, d x = \int_E h \phi \, d x. \end{align*}
	In he above derivation we were allowed to exchange the limit and integral twice, since (for the first interchange), $f_n \rightarrow f$ in $L^2(\mu)$, and using (for the second interchange) that $f_n \rightarrow f$ in $L^2(\mu)$, $g_n \rightarrow g$ in $L^2(\mu)$, and $\lambda$ is bounded on compact intervals, and thus $\partial_x f_n$ converges in $L^2_{\text{loc}}(\R)$ by~\eqref{eq:dfnx}.
	This establishes that $\partial_x f = h$. Furthermore, taking a subsequence, $\theta \partial_x f_{n_k} =  g_{n_k} - \lambda(F f_{n_k} - f_{n_k})\rightarrow g - \lambda(F f - f) = \theta \partial_x f$ almost everywhere (by $L^2$-convergence of $f_n$ and $g_n$). Therefore $g_{n_k} = L f_{n_k} \rightarrow \theta \partial_x f + \lambda (F f - f) = g$ almost everywhere. Since also $g_{n_k} \rightarrow g$ in $L^2(\mu)$ we have $f \in \mathcal D(L)$ and $Lf=g$.
\end{proof}

\begin{lemma}
 \label{lem:core}
$C_c^{\infty}(E)$ is a core for $L$.
\end{lemma}

\begin{proof}
We use ideas from the proof of \cite[Proposition 8.7.3]{LorenziBertoldi2007}. Let $f \in \mathcal D(L)$, and let $\xi : [0,\infty) \rightarrow [0,1]$ be a smooth function such that $\xi = 1$ in $[0,1/2]$ and $\xi = 0$ in $[1, \infty)$, and define $\xi_n(x) = \xi(|x|/n)$. Then define $f_n(x,\theta) = f(x,\theta) \xi_n(x)$. Then every $f_n$ has compact support, $f_n \in W^{1,2}(\mu)$, and $f_n \rightarrow f$ in $L^2(\mu)$ by dominated convergence.
Furthermore we have
\begin{align*} & L f_n(x,\theta) 
 = \theta \xi_n'(x)  f(x,\theta)  + \xi_n(x) L f(x,\theta).
\end{align*}
Note that $\sup_{x \in \R} |\xi_n'(x)| \leq C /n$ for some $C > 0$.
It follows that, as $n \rightarrow \infty$,
\[ \| L f_n(x,\theta) - L f(x,\theta) \|_{L^2(\mu)} \leq C/n \|f\|_{L^2(\mu)} + \| (1 - \xi_n) L f\|_{L^2(\mu)} \rightarrow 0, \]
where the second term converges to zero by dominated converence. Thus, functions with compact support are dense in $\mathcal D(L)$ with respect to the graph norm of $L$. We can now limit ourselves to the situation in which $f$ has compact support. Let $\varphi \in C_c^{\infty}(\R)$ be a smooth function, compactly supported in $[-1,1]$ such that $\varphi = 1$ in $[-1/2,1/2]$, $\varphi(x) \in [0,1]$ for all $x$, and $\| \varphi\|_{L^1(\R)} = 1$. Set $\varphi_n(x) = n \varphi(x/n)$ for $n \in \N$, and for compactly supported $f$, define
\[ f_n(x,\theta) = (f \star \varphi_n)(x,\theta) := \int_{\R} \varphi_n(x -y) f(y,\theta) \, d y, \quad x \in \R, \quad n \in \N.\]
Then $f_n \in C_c^{\infty}(E)$, uniformly supported in $\operatorname{supp}(f) + B(1)$, and $f_n \rightarrow f$ in $L^2(\mu)$. Furthermore, since $\partial_x f_n = (\partial_x f) \star \varphi_n$, and $\lambda$ is bounded uniformly in $n$ on the support of $(f_n)$, it follows that $\| L f_n - L f \|_{L^2(\mu)} \rightarrow 0$.
\end{proof}

\begin{lemma}[Dissipativity]
	\label{lem:dissipative}
	$(\lambda^+ + \lambda^-)|f^+ - f^-|^2 \in L^1(\nu)$ for any $f \in \mathcal D(L)$,  and
	\begin{equation} \label{eq:dirichlet-form} \Re \int_{E} (L f) \overline f \, d \mu \leq - \tfrac 1 2 \int_{\R} (\lambda^+ + \lambda^-)|f^+ - f^-|^2 \, d \nu, \quad f \in  \mathcal D(L).\end{equation}
\end{lemma}
\begin{proof}
First we establish the result for $f \in C_c^{\infty}(E)$. 
	By partial integration, using compact support of $f$, and noting that~\eqref{eq:switching-rate} leads to the equality
	$\lambda^+(x) - \lambda^-(x) = U'(x)$, we find that 
	\begin{align*}
	& \Re \sum_{\theta = \pm 1} \int_{-\infty}^\infty (L f)^{\theta} \overline{f^{\theta}} \, d \nu(x) \\
	& = \Re  \int_{-\infty}^\infty \left\{\left(\partial_x f^+ + \lambda^+ (f^- - f^+) \right) \overline{f^+} + \left(-\partial_x f^- + \lambda^- (f^+ - f^-) \right) \overline{f^-} \right\}  e^{-U} \, d x \\
	& = \int_{-\infty}^\infty \tfrac 1 2 \partial_x (|f^+|^2 e^{-U}) + \Re \left(\tfrac 1 2 U' f^+  + \lambda^+ (f^- - f^+) \right) \overline{f^+} e^{-U} \, dx \\
	& \quad  + \int_{-\infty}^\infty (-\tfrac 1 2 \partial_x (|f^-|^2 e^{-U})  + \Re \left(-\tfrac 1 2  U' f^- + \lambda^- (f^+ - f^-) \right) \overline{f^-}  e^{-U} \, d x \\
	& = \Re  \int_{-\infty}^\infty \left( \lambda^+ f^- \overline{f^+} + \lambda^- f^+ \overline{f^-} - \tfrac 1 2 (\lambda^+ + \lambda^-) |f^+|^2 - \tfrac 1 2 (\lambda^+ + \lambda^-)|f^-|^2  \right) e^{-U} \,dx \\
	& =  - \tfrac 1 2 \int_{-\infty}^\infty (\lambda^+ + \lambda^-) |f^+ - f^-|^2 e^{-U} \, d x.
	\end{align*}
	
	For $f \in \mathcal D(L)$, using that $C_c^{\infty}(E)$ is a core for $L$ (Lemma~\ref{lem:core}),  take a sequence $f_n \rightarrow f$, with $f_n \in C_c^{\infty}(E)$, and with convergence in the graph norm of $L$.
	By Fatou, 
	\[ \tfrac 1 2\int_{\R} (\lambda^+ +\lambda^-) |f^+ - f^-|^2 \, d \nu \leq - \Re \int_E (L f) \overline f \, d \mu < \infty.\]
\end{proof}

Recall that the adjoint operator of a densely defined operator $(A, \mathcal D(A))$ on $L^2(\mu)$ is defined on the domain
\[ \mathcal D(A^{\star}) = \{ g \in L^2(\mu) : \text{there is an $h \in L^2(\mu)$ such that  $\langle A f, g \rangle = \langle f, h\rangle$ for all $f \in \mathcal D(A)$}\}, \]
by $A^{\star} g = h$, with $h$ the function as described in the domain of the adjoint.
%% See Engel, Nagel, Def B.8

Define an operator $(L^{\times}, \mathcal D(L^{{\times}}))$ by
\[ \mathcal D(L^{{\times}}) = \{ g \in L^2(\mu) \cap W^{1,2}_{\loc}(E): (x,\theta) \mapsto - \theta \partial_x g(x,\theta) + \lambda(x,-\theta) (g(x,-\theta) - g(x,\theta)) \in L^2(\mu) \}\]
and
\[ L^{{\times}} g (x,\theta) =  - \theta \partial_x g(x,\theta) + \lambda(x,-\theta) (g(x,-\theta) - g(x,\theta)), \quad g \in \mathcal D(L^{{\times}}), \, (x,\theta) \in E.\]
This operator is going to be our candidate for the adjoint operator $L^{\star}$.
\begin{lemma}
	\label{lem:adjoint}
	$(L^{\star},\mathcal D(L^{\star}))= (L^{\times},\mathcal D(L^{{\times}}))$.
\end{lemma}
\begin{proof}
    We first show that $\mathcal D(L^{\star}) \subset \mathcal D(L^{\times})$, and that $L^{\star}$ conincides with $L^{\times}$ on its domain.
	Suppose $g \in \mathcal D(L^{\star})$, and let $h \in L^2(\mu)$ be so that $\langle Lf, g \rangle = \langle f, h\rangle$ for every $f \in \mathcal D(L)$. Note that $C_c^{\infty}(E) \subset \mathcal D(L)$. For $f \in C_c^{\infty}(E)$ and $\theta = \pm 1$, we have 
	\begin{align*} \sum_{\theta = \pm 1} \int_{\R} \theta \partial_x f(x,\theta) \overline{ g(x,\theta)} e^{-U(x)} \, d x & = \langle \theta \partial_x f,  g \rangle \\
	& = \langle (L f - \lambda(F f-f)), g \rangle = \langle  f, h \rangle - \langle \lambda (F f - f ), g \rangle\\
	&  = \sum_{\theta = \pm 1} \int_{\R} f \overline{\left(h - F(\lambda g) + \lambda g\right)}e^{-U} \, d x.\end{align*}
	Since this holds in particular for any $f$ such that either $f(\cdot, + 1) =0$ or $f(\cdot, -1) =0$, it follows that $\theta g e^{-U}$ has a weak derivative $ - \left( h - F(\lambda g) + \lambda g \right)e^{-U}$. From this we may conclude (by Lemma~\ref{lem:productrule}) that $g = (\theta g e^{-U}) (\theta e^U) \in W^{1,2}_{\loc}(E)$ with weak derivative
	\[ \partial_x g = - \theta \left( h - F(\lambda g) + \lambda g \right) + U' g.\]
	Isolating $h$ on the left hand side gives
	\begin{align*} h(x,\theta) & = - \theta \partial_x g(x,\theta) + \lambda(x,-\theta) g(x,-\theta) - \lambda(x,\theta) g(x,\theta) + \theta U'(x) g(x,\theta) \\
	&  = - \theta \partial_x g(x,\theta) + \lambda(x,-\theta) (g(x,-\theta) -  g(x,\theta)),
	\end{align*}
	using that $\theta U' = \lambda - F \lambda$. 
	Since $h \in L^2(\mu)$ the right-hand side is also in $L^2(\mu)$. So $g \in \mathcal D(L^{{\times}})$ and $h = L^{{\times}} g$. 
	
	Conversely, if $g \in \mathcal D(L^{\times})$ and $f \in \mathcal D(L)$, we may approximate $f$ by $\tilde f \in C^{\infty}_c(E)$. By partial integration it follows that $\langle L \tilde f, g\rangle = \langle \tilde f, L^{\times} g \rangle$. Since $\tilde f$ can be arbitrarily close to $f$ in the graph norm of $L$ by Lemma~\ref{lem:core}, it follows that $\langle L f, g \rangle = \langle f, L^{\times} g \rangle$. This establishes that $\mathcal D(L^{\times}) \subset \mathcal D(L^{\star})$.
\end{proof}

The following lemma establishes that $L^{{\times}}  = FLF$. 

\begin{lemma} \label{lem:adjoint-symmetry}
	$f \in \mathcal D(L)$ if and only if $F f \in \mathcal D(L^{{\times}})$, and $FL f = L^{{\times}} F f$ for $f \in \mathcal D(L)$. In particular $L^{\star} = L^{\times}$ is dissipative.
\end{lemma}
\begin{proof}
	If $f \in \mathcal D(L)$, then $f \in W^{1,2}_{\loc}(E)$, and therefore $F f \in W^{1,2}_{\loc}(E)$. Applying $L^{{\times}}$ (formally) to $Ff$ yields
	\[ L^{{\times}} F f(x,\theta) = -\theta \partial_x f(x,-\theta) + \lambda(x,\theta) (f(x,\theta) - f(x,-\theta)) = F L f(x,\theta).\]
	Since $L f \in L^2(\mu)$, it follows that $F L f \in L^2(\mu)$. This establishes that $F f \in \mathcal D(L^{{\times}})$ and $FL f = L^{{\times}} F f$ for $f \in \mathcal D(L)$.
	The reverse inclusion is analogous.
	Since $L$ is dissipative by Lemma~\ref{lem:dissipative}, this applies to $L^{\times}$.
\end{proof}

% A strongly continuous semigroup on $L^2(\mu)$ is called \emph{positive} if for any real-valued nonnegative function $f \in L^2(\mu)$, $P(t) f \geq 0$, $\mu$-almost everywhere for all $t \geq 0$, and \emph{conservative} if $P(t) \1 = \1$ for all $t \geq 0$.
We summarize our findings in the following theorem.

\begin{theorem}
	\label{thm:semigroup}
	$L$ is a closed dissipative operator which generates a strongly continuous 
% 	positive conservative 
	contraction semigroup $(P(t))_{t \geq 0}$ in $L^2(\mu)$. The adjoint of $L$ is given by $L^{\star} = F L F$, i.e., $\mathcal D(L^{\star}) = \{ F f : f \in \mathcal D(L)\}$ and $L^{\star} g = FLF g$ for $g \in \mathcal D(L^{\star})$. The adjoint semigroup $(P^{\star}(t))_{t \geq 0}$ is given by $P^{\star}(t) = F P(t) F$ for $t \geq 0$.
\end{theorem}

\begin{proof}
	The fact that $L$ generates a strongly continuous semigroup  follows by \cite[Theorem II.3.17]{EngelNagel2000}, using that $L$ and $L^{\star}$ are dissipative. In particular the range of $\gamma - L$ is equal to $L^2(\mu)$ (\cite[Theorem II.3.15]{EngelNagel2000}) for any $\gamma > 0$. 
% 	The fact that this semigroup is a semigroup of positive operators follows by the characterization of such semigroups \cite[Theorem C-II-1.2]{Arendt1986} and the dispersiveness of $L$ (Lemma~\ref{lem:dispersive}). 
The statement on the adjoints of $L^{\star}$ and $P^{\star}$ follows by Lemma~\ref{lem:adjoint}.
\end{proof}

\subsection{Compactness of the resolvent of the zigzag semigroup generator}
\label{sec:compactness}
We will now investigate conditions under which the resolvent of the generator of the zigzag semigroup is compact. This is of fundamental importance in the characterization of the spectrum in subsequent sections.

Let $W^{1,2}(\nu)$ denote the usual Sobolev space of functions in $L^2(\nu)$ which have a weak derivative in $L^2(\nu)$. Since $W^{1,2}(\nu) \subset W^{1,2}_{\text{loc}}(\R)$, all $f \in W^{1,2}(\nu)$ are absolutely continuous.  Finally we denote by $W^{1,2}(\mu)$ the (equivalence classes of) measurable functions $f : E \rightarrow \R$ such that $f^{\theta} \in W^{1,2}(\nu)$ for $\theta  = \pm 1$. 

We may obtain a better understanding of $\mathcal D(L)$ if we suppose Assumption~\ref{ass:sobolev} and~\ref{ass:compactness} are satisfied. In particular, $e^{-U}$ is assumed to belong to $L^1(\R)$ so that $\nu$ and $\mu$ are finite measures. The relevance of these assumptions follows for a large part from the following lemmas.

\begin{lemma}[{\cite[Lemma 8.5.2, Theorem 8.5.3]{LorenziBertoldi2007}}]
	\label{lem:sobolev-classical}
	Suppose that Assumption~\ref{ass:sobolev} holds. Then for any $p \in [2, +\infty)$ there exists a positive constant $C$ such that
	\[ \int_{\R} |f|^p |U'|^2 d \nu \leq C \|f\|_{W^{1,p}(\nu)}^p, \quad f \in W^{1,p}(\nu).\]
	If additionally Assumption~\ref{ass:compactness} is satisfied, then the embedding $W^{1,p}(\nu) \subset L^p(\nu)$ is compact for any $p \in [2,+\infty)$.
\end{lemma}

\begin{lemma}
\label{lem:sobolev-norm}
Suppose that $U$ satisfies Assumption~\ref{ass:sobolev}.
Suppose $f \in W^{1,2}_{\loc}(\R) \cap L^2(\nu)$ is such that $f' - U' f \in L^2(\nu)$. Then $f \in W^{1,2}(\nu)$ and 
\begin{equation}
 \label{eq:sobolev-norm} 
 \int_{\R} |f'|^2 \, d \nu \leq -m \int_{\R} |f|^2 \, d \nu + \int_{\R} |f' - U' f|^2 \, d \nu < \infty.
\end{equation}
\end{lemma}
\begin{proof}
First suppose $f \in C_c^{\infty}(\R)$.
We compute
\begin{align*}
\int_{\R} |f' - U' f |^2 \, d \nu = \int_{\R} \left\{ |f'|^2 - f' U' \overline f - U' f (\overline{f'}) +|U'|^2 |f|^2 \right\} \, d \nu.
\end{align*}
For the cross terms we find by partial integration that
\begin{align*}
    \int_{\R} \{  - (f') U' \overline f - U' f (\overline{f'}) \} \, d \nu & = - \int_{\R}  (\partial_x |f|^2) U' \, e^{-U} \, d x \\
    & = \int_{\R} \left\{ |f|^2 U'' - |U'|^2 |f|^2 \right\} e^{-U} \, d x.
\end{align*}
Thus,
\begin{align*}
\int_{\R} |f'-U' f |^2 \, d \nu = \int_{\R} \left\{ |f'|^2 + |f|^2 U'' \right\} d \nu. % \geq \int_{\R} \left\{ |f'|^2 - c |f|^2 \right\} \, d \nu.
\end{align*}
Combining with the assumption that $U'' \geq m$ and rearranging yields the stated conclusion for $f \in C_c^{\infty}(\R)$.

Analogous to the proof of~Lemma~\ref{lem:core}, it may be established that $C_c^{\infty}(\R)$ is a core for the closed operator $(C, \mathcal D(C))$ on $L^2(\nu)$ with $C f = f' - U' f$ and $\mathcal D(C) = \{ f \in L^2(\nu) \cap W^{1,2}_{\loc} : C f \in L^2(\nu)\}$; see Lemma~\ref{lem:C-core} for details. Letting $f_n \rightarrow f$ in the graph norm of $C$ with $(f_n) \subset C_c^{\infty}(\R)$, we find that, using Fatou's lemma, and after taking a subsequence for which $f'_n$ converges almost everywhere, 
\begin{align*}
\| f'\|^2_{L^2(\nu)} & = \| \lim_{n \rightarrow \infty} f_n'\|^2_{L^2(\nu)} \\
& \leq 
 \liminf_{n \rightarrow \infty} \| f'_n\|^2_{L^2(\nu)} \\
 & \leq \liminf_{n \rightarrow \infty} \left( -m \| f_n\|_{L^2(\nu)}^2 + \| C f_n \|^2_{L^2(\nu)}\right) \\
 & = -m \| f\|_{L^2(\nu)}^2 + \| C f \|^2_{L^2(\nu)}  <\infty.
\end{align*}
 \end{proof}

We can now state sufficient conditions for the resolvent of the generator of the zigzag semigroup to be compact.

\begin{theorem}[Compact resolvent]
	\label{thm:compact-resolvent}
	Suppose Assumption~\ref{ass:sobolev} is satisfied. Then $\mathcal D(L) = W^{1,2}(\mu)$. If also  Assumption~\ref{ass:compactness} holds then the embedding $W^{1,2}(\mu) \subset L^2(\mu)$ is compact and in particular, for any $\gamma > 0$, the resolvent operator $(\gamma - L)^{-1}: L^2(\mu) \rightarrow L^2(\mu)$ is compact.
\end{theorem}
\begin{proof}
First we prove that $\mathcal D(L) \subset W^{1,2}(\mu)$.
Suppose $f \in \mathcal D(L)$ and write $ L f = h$. Write $g_1 = f^+ - f^-$ and $g_2 = f^+ + f^-$.
We have
\begin{align*}
 \partial_x g_1 & = h^+ - \lambda^+(f^- - f^+) + h^- -\lambda^-(f^+- f^-) = h^+ + h^- + U' g_1,\\
 \partial_x g_2 & = h^+  -\lambda^+(f^- - f^+) - h^- + \lambda^- (f^+ - f^-) = h^+ - h^- - (2 \lambda_{\refr} + |U'|) g_1.
\end{align*}
We see that $g_1$ satisfies $g_1' - U' g_1 = h^+ + h^- \in L^2(\nu)$ so that, by Lemma~\ref{lem:sobolev-norm}, $g_1 \in W^{1,2}(\nu)$.
Next by the estimate on $\lambda_{\refr}$ of Assumption~\ref{ass:sobolev}, Lemma~\ref{lem:sobolev-classical} and the second equation, $g_2 \in W^{1,2}(\nu)$. The inclusion $\mathcal D(L) \subset W^{1,2}(\mu)$ follows since $f^{\pm}$ may be expressed as linear combinations of $g_1$ and $g_2$.

Conversely, we now prove that $W^{1,2}(\mu) \subset \mathcal D(L)$.
Since $\lambda_{\refr}(x) \leq c_1 + c_2 |U'(x)|$ for some constants $c_1, c_2$, it follows that $|\lambda(x,\theta)| \leq C_1 + C_2 |U'(x)|$ for some constants $C_1$, $C_2$. 
By Assumption~\ref{ass:sobolev} and Lemma~\ref{lem:sobolev-classical}, $\| f \|_{L^2(\mu)} + \| L f\|_{L^2(\mu)} \leq \widetilde C \|f\|_{W^{1,2}(\mu)}$ for some $\widetilde C > 0$, which establishes that $W^{1,2}(\mu) \subset \mathcal D(L)$.

Let $\gamma > 0$. Recall that $\gamma - L : \mathcal D(L) \rightarrow L^2(\mu)$ is bijective, so the inverse map exists, and is bounded by dissipativity (see \cite[Proposition II.3.14]{EngelNagel2000}. 
Since the embedding $W^{1,2} \hookrightarrow L^2(\mu)$ is compact by Lemma~\ref{lem:sobolev-classical} it follows from \cite[Proposition II.4.25]{EngelNagel2000} that the resolvent is compact.
\end{proof}

\begin{remark}
If Assumption~\ref{ass:sobolev} is satisfied, then~\eqref{eq:dirichlet-form} holds with equality, i.e., for all $f \in \mathcal D(L)$ we have
\[ \Re \int_{E} (L f) \overline f \, d \mu = - \tfrac 1 2 \int_{\R} (\lambda^+ + \lambda^-)|f^+ - f^-|^2 \, d \nu, \quad f \in  \mathcal D(L).\]
This follows from the proof of Lemma~\ref{lem:dissipative}. Indeed in the proof the above relation is established for $f \in C_c^{\infty}(E)$. Now for $f \in \mathcal D(L) = W^{1,2}(\mu)$, take a sequence $(f_n) \subset C_c^{\infty}(E)$ converging in $W^{1,2}(\mu)$ towards $f$, and note that both sides of the above equation converge, using Lemma~\ref{lem:sobolev-classical}.
\end{remark}

The fact that the resolvent is compact under suitable conditions leads naturally to the question whether the semigroup is eventually compact. We will now establish that this is not the case.  The following lemma first states in a technical way that the zigzag semigroup consists simply of translations in regions of the space where the switching intensity vanishes, a necessary ingredient for the conclusion of Proposition~\ref{prop:not-eventually-compact} that the zigzag semigroup is in general not eventually compact.

\begin{lemma}
	\label{lem:noswitching-is-just-translation}
	Suppose $\lambda_{\refr}(x) = 0$ for all $x$.
	Suppose $\theta U'(x) \leq 0$  for some $\theta \in \{-1,+1\}$ and all $x \in I$, where $I = [a,b]$ is a non-empty interval. Then $P(t) f(x,\theta) = f(x + \theta t,\theta)$ for all $t \geq 0$ and $\mu$-almost all $x \in \R$ for which $\{ x + \theta s : 0 \leq s \leq t\} \subset I$.
\end{lemma}
\begin{proof}
	First suppose $f \in \mathcal D(L)$. Write $\varphi(t,x) := P(t) f(x,\theta)$ with $\theta$ as in the assumption of the lemma. Then, for $x \in I$
	\[ \partial_t \varphi(t,x) = \partial_t P(t) f (x,\theta) = L P(t) f(x,\theta) = \theta \partial_x P(t) f(x,\theta) = \theta \partial_x \varphi(t,x).\]
	Fix $(x, t)$ such that $x + \theta s \in [a,b]$ for $0 \leq s \leq t$.
	Consider the characteristic curve $X(s) =x + \theta s$, $T(s) = t - s$.
	We find that, since $X(s) \in I$ for all $0 \leq s \leq t$, then
	$\frac{d}{ds} \varphi(T(s), X(s)) = 0$, so that 
	\[ \varphi(t, x) = \varphi(T(0),X(0)) =  \varphi(T(t), X(t)) = \varphi(0, x + \theta t) = f(x + \theta t).\]
	Now if $f_n \rightarrow f$ in $L^2(\mu)$ with  $(f_n) \subset \mathcal D(L)$, then $P(t) f_{n_k}(x,\theta) \rightarrow P(t) f(x,\theta)$, $\mu$-almost all $x$, for a subsequence $f_{n_k}$, and $f_{n_{k_l}}(x + \theta t, \theta) \rightarrow f(x + \theta t, \theta)$, $\mu$-almost all $x$, for a further subsequence, which yields the result for general $f$.
\end{proof}

% \begin{example}[The zigzag semigroup is in general not eventually compact]
\begin{proposition}
	\label{prop:not-eventually-compact}
	Suppose Assumption~\ref{ass:unimodal} is satisfied. Furthermore assume that $\lambda_{\refr}(x) = 0$ everywhere.
% 	For example $U(x) = \tfrac 1 2 x^2$, corresponding to a standard normal marginal distribution on $\R$. 
% We will show that, 
Then, for any $t > 0$, the operator $P(t)$ is not compact.
\end{proposition}

\begin{proof}
	For $n \in \N$  define functions $f_n$ by
	\begin{align*} \widetilde f_n^+(x) &= \1_{[-1/n,-1/(n+1)]}(x),  \quad & \widetilde f_n^-(x) &= 0, \quad &\widetilde f_n(\cdot,\pm 1)& = \widetilde  f_n^{\pm}, \quad &f_n &= \widetilde f_n / \|\widetilde f_n\|_{L^2(\mu)}.
	\end{align*}
	Then $(f_n)$ is a bounded sequence of functions in $L^2(\mu)$. Therefore if $P(t_0)$ were compact for some $t_0 > 0$, it should be possible to find a convergent subsequence of $(P(t_0)f_n)_{n=1}^{\infty}$. However we will show that this is impossible.
	
	By Assumption~\ref{ass:unimodal}, $U'(x) \leq 0$ for $x \leq 0$. By Lemma~\ref{lem:noswitching-is-just-translation},
	%	 if $x \leq -t$, then we know that no switch has occurred by time $t$ if we started from $(x,\theta)$ with $\theta = +1$. Thus
	\[ P(t_0) f (x,+1) = f(x+t_0,+1) \quad \text{if $x \leq -t_0$, \quad for any $f \in L^2(\mu)$.}\]
	Suppose $m \neq n$. Then 
	\begin{align*}
	\| P(t_0) f_n - P(t_0) f_m\|_{L^2(\mu)}^2 & \geq  \int_{-\infty}^{-t_0} |P(t_0) f_n(x,+1) - P(t_0) f_m(x,+1)|^2 e^{-U(x)} \, d x  \\
	& = \int_{-\infty}^{-t_0} |f_n^+(x+t_0) - f_m^+(x+t_0)|^2 e^{-U(x)} \, d x \\
	& = \int_{-\infty}^{0} \left( |f_n^+(y)|^2 + |f_m^+(y)|^2 \right) e^{-U(y-t_0)} \, d y \\
	& = \frac{\int_{-1/n}^{-1/{(n+1)}} e^{-U(y-t_0)} \, d y}{\int_{-1/n}^{-1/{(n+1)}} e^{-U(y)} \, d y} + \frac{\int_{-1/m}^{-1/{(m+1)}} e^{-U(y-t_0)} \, d y}{\int_{-1/m}^{-1/{(m+1)}} e^{-U(y)} \, d y} \\
	&\geq 2 e^{-U(-1-t_0)+U(0)}.
	\end{align*}
	This establishes that no subsequence of $(P(t_0)f_n)_{n \in \N}$ is Cauchy, and therefore that $P(t_0)$ is not compact.
\end{proof}

% Proposition~\ref{prop:not-eventually-compact} establishes that the zigzag semigroup with canonical switching rates (i.e. $\lambda_{\refr} = 0$) for a unimodal distribution is not eventually compact. 
% Recall that eventually norm continuous semigroups with compact resolvent are eventually compact \cite[Lemma II.4.28]{EngelNagel2000}. Since the zigzag semigroup resolvent is compact (Theorem~\ref{thm:compact-resolvent}) it follows that the zigzag semigroup can not be eventually norm continuous (at least for unimodal distributions with canonical switching rates).
% It follows that in the unimodal setting, the zigzag semigroup corresponding to the non-canonical switching rates $\lambda(x,\theta) = (\theta U'(x) \vee 0) + \lambda_{\refr}$ with $\lambda_{\refr} > 0$ can not be immediately compact. Indeed, the generator of zigzag semigroup corresponding to canonical switching rates ($\lambda_{\refr} = 0$) can be obtained as a bounded perturbation of the generator with $\lambda_{\refr} > 0$. If the non-canonical semigroup were to be immediately compact, then the canonical semigroup would also be immediately compact (\cite[Theorem III.1.16]{EngelNagel2000}), a contradiction.

%\todo{What happens if we consider non-canonical switching rates? The argument can perhaps be extended, using $P(t) f_n(x,1) \geq e^{-\lambda_{\refr} t} f_n(x + t,+1)$ for $x \leq -t$. Not sure yet.}

\section{Spectral theory of the zigzag generator}
\label{sec:generatorspectrum}
This section will describe key results concerning the characterization of the spectrum of the zigzag semigroup under the assumptions of unimodularity and no refreshments. The main result is Theorem~\ref{thm:spectrum} which, in summary, gives a description of the spectrum of $L$ as consisting of the roots of a particular complex valued function.

As usual define the \emph{resolvent set} $\rho(A)$ of a closed operator $A$ by
\[ \rho(A) = \{ \gamma \in \C: \text{$\gamma -A$ is invertible and $(\gamma - A)^{-1}$ is bounded}\}\]
and the \emph{spectrum} $\sigma(A)$ by $\sigma(A) = \C \setminus \rho(A)$. The \emph{point spectrum} of $A$ is defined as
\[\sigma_p(A) = \{ \gamma \in \C: \text{$\gamma - A$ is not injective} \}.\]
Elements $\gamma \in \sigma_p(A)$ are called \emph{eigenvalues}.
For a disjoint decomposition of the spectrum $\sigma(A) = \sigma_c \cup \sigma_u$ where $\sigma_u$ is closed and  $\sigma_c$ is compact, the \emph{spectral projection} $P_c$ is defined as
\[ P_c = \frac 1 {2 \pi \i} \int_{\Gamma} (\zeta - A)^{-1} \, d \zeta,\]
where $\Gamma$ is a Jordan curve in the complement of $\sigma_u$ and enclosing $\sigma_c$. See \cite[Section IV.1]{EngelNagel2000} or \cite[Section III.6.4]{Kato1995} for details. If $\Gamma$ encloses only a single element $\gamma \in \sigma(A)$ then we write $P_{\gamma} = P_c$.

We start with a simple corollary of Theorem~\ref{thm:compact-resolvent}.
\begin{corollary}
	\label{cor:compact-resolvent}
	Suppose Assumptions~\ref{ass:sobolev} and~\ref{ass:compactness} are satisfied. Then the spectrum of $L$ consists entirely of isolated eigenvalues with finite multiplicities, and the resolvent operator $(\gamma - L)^{-1}$ is compact for every $\gamma \in \rho(L)$.
\end{corollary}
\begin{proof}
	This follows from Theorem~\ref{thm:compact-resolvent} and \cite[Theorem III.6.29]{Kato1995}.
\end{proof}

There are also some interesting immediate consequences of the fact that $L^{\star} = F L F$ which we state in some generality in Lemma~\ref{lem:J-selfadjoint}.
Let $(\mathcal H, \langle \cdot, \cdot, \rangle)$ denote a Hilbert space, and let $J$ be a unitary operator on $\mathcal H$.
We call a closed operator $(A, \mathcal D(A))$ on $\mathcal H$ \emph{$J$-selfadjoint} if $A^{\star} = J A J^{\star}$. 
In the case of the zigzag process we have that $L$ is $J$-selfadjoint for $J = F$.

\begin{lemma}
	\label{lem:J-selfadjoint}
	Suppose $(A, \mathcal D(A))$ is a closed operator on $\mathcal H$ such that $A$ is $J$-selfadjoint. 
	\begin{itemize}
		\item[(i)] $\rho(A)=\rho(A^{\star})$.
		\item[(ii)] $\sigma(A) = \{ \overline \gamma : \gamma \in \sigma(A)\}$ and $\sigma_p(A) = \{ \overline \gamma : \gamma \in \sigma_p(A)\}$ (i.e. $\sigma(A)$ and $\sigma_p(A)$ are closed under taking complex conjugate).
		\item[(iii)]  If $\gamma_1$, $\gamma_2$, $\gamma_1 \neq \gamma_2$, are eigenvalues of $A$ with eigenvectors $\phi_1$, $\phi_2$, respectively, then either $\gamma_1 = \overline \gamma_2$ or
		$\phi_1$ and $\phi_2$ are $J$-orthogonal, i.e., $\langle J \phi_1, \phi_2 \rangle = 0$.
		%	
		%	If $\gamma$ is an eigenvalue of $A$ with eigenvector $\phi$, then either $\gamma \in \R$ or $\langle J \phi ,\phi \rangle = 0$.
		%	\item[(iv)]
		\item[(iv)]
		If $\phi$ is an eigenvector of $A$ with eigenvalue $\gamma$, then $J \phi$ is an eigenvector of $A^{\star}$ with eigenvalue $\gamma$.
		\item[(v)] The adjoint of the spectral projection $P_{\gamma}$ associated to an isolated eigenvalue $\gamma \in \sigma(A)$ satisfies
		\[ (P_{\gamma})^{\star} = J P_{\overline\gamma} J^{\star}\]
		and is equal to the spectral projection associated with $A^{\star}$ for the eigenvalue $\overline \gamma$.
	\end{itemize}
\end{lemma}	

\begin{proof}
	\begin{itemize}
		\item[(i)] This follows from $(\gamma - A^{\star})^{-1} = J (\gamma - A)^{-1} J^{\star}$ for $\gamma \in \rho(A)$.
		\item[(ii)] For general closed operators $A$ on a Hilbert space, we have that $\sigma(A^{\star})$ equals the complex conjugate of $\sigma(A)$, and similarly for $\sigma_p(A)$ (e.g., \cite[Theorem III.6.22]{Kato1995}). Combining with the above equality used in  the proof of (i) gives the stated result.
		%	\item[(iii)] This follows from
		%	\[ \gamma \langle J \phi, \phi \rangle = \langle J A \phi, \phi \rangle = \langle A^{\star} J \phi, \phi \rangle = \langle J \phi, A \phi \rangle = \overline \gamma \langle J \phi, \phi \rangle.\]
		\item[(iii)] This follows from 
		\[ \gamma_1 \langle J \phi_1, \phi_2 \rangle = \langle J A \phi_1, \phi_2 \rangle  = \langle A^{\star} J \phi_1, \phi_2 \rangle = \langle J \phi_1, A \phi_2 \rangle = \overline \gamma_2 \langle J \phi_1, \phi_2 \rangle.\]	
		\item[(iv)] If $\phi$ is an eigenvector of $A$ with eigenvalue $\gamma$, then $A^{\star} J \phi = J A \phi = \gamma J \phi$.
		\item[(v)] Let  $\overline \Gamma$ denote a counterclockwise contour in the complex plane around (only) the eigenvalue $\overline \gamma$. By \cite[Theorem III.6.22]{Kato1995}, the adjoint of the corresponding spectral projection satisfies 
		\[ (P_{\gamma})^{\star} = \frac 1 { 2 \pi \i} \int_{\overline \Gamma} (\zeta - A^{\star})^{-1} \, d \zeta = \frac 1 {2 \pi \i} J \int_{\overline \Gamma} (\zeta - A)^{-1} \, d \zeta J^{\star} = J P_{\overline \gamma}J^{\star}.\] 
	\end{itemize}
\end{proof}

For later reference we note the following general observation for eigenfunctions of linear operators on function spaces.
\begin{lemma}
	\label{lem:general-result-eigenfunctions}
	Suppose $(A, \mathcal D(A))$ is a closed operator on $L^2(\mu)$. Suppose 
	\[ \psi \in \mathcal D(A) \Longrightarrow \Re \psi \in \mathcal D(A) \quad \text{and}\quad \Im \psi \in \mathcal D(A),\] and suppose $A$ maps (a.e.) real valued functions in its domain into (a.e.) real valued functions.
	Suppose $\phi$ is an eigenfunction of $A$ with eigenvalue $\gamma$. Then $\overline \phi$ is an eigenfunction of $A$ with eigenvalue $\overline \gamma$.
\end{lemma}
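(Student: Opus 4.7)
The plan is to reduce the claim to the familiar fact that $\overline{A\psi} = A\overline\psi$ when $A$ preserves reality, combined with the conjugate symmetry $\overline{\gamma\phi} = \overline\gamma\,\overline\phi$. Concretely, I would decompose $\phi = u + \i v$ with $u := \Re\phi$ and $v := \Im\phi$, and write $\gamma = \alpha + \i \beta$ with $\alpha,\beta \in \R$. By the first hypothesis, both $u$ and $v$ lie in $\mathcal D(A)$, hence so does $\overline\phi = u - \i v$ by linearity of the domain.

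Next I would unfold the eigenvalue equation $A\phi = \gamma \phi$. By linearity, $A\phi = Au + \i Av$, while the right-hand side expands to $(\alpha u - \beta v) + \i(\beta u + \alpha v)$. The second hypothesis guarantees that $Au$ and $Av$ are real-valued a.e., so the left-hand side is already written as a sum of a real and a purely imaginary function. Equating real and imaginary parts almost everywhere then yields the two identities
\begin{equation*}
Au = \alpha u - \beta v, \qquad Av = \beta u + \alpha v.
\end{equation*}

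With these in hand, the result is immediate: using linearity one more time,
\begin{equation*}
A\overline\phi = Au - \i Av = (\alpha u - \beta v) - \i(\beta u + \alpha v) = (\alpha - \i \beta)(u - \i v) = \overline\gamma\,\overline\phi,
\end{equation*}
which shows that $\overline\phi$ is an eigenfunction with eigenvalue $\overline\gamma$. Note that $\overline\phi \neq 0$ since $\phi \neq 0$.

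There is essentially no serious obstacle here; the entire content of the lemma is the legitimacy of the real/imaginary decomposition, which is precisely what the two hypotheses on $A$ are designed to secure. The only point worth being careful about is invoking the second hypothesis at the right moment—namely, to conclude that $Au, Av$ are real a.e. so that the equality $Au + \i Av = (\alpha u - \beta v) + \i(\beta u + \alpha v)$ of a.e.\ real and imaginary parts may legitimately be split into two scalar identities, rather than being a single complex identity only.
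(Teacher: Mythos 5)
Your proof is correct and uses essentially the same idea as the paper's: decompose $\phi$ into real and imaginary parts and exploit the hypothesis that $A$ preserves real-valuedness. The paper's version is slightly more compact—it computes $\overline{A\overline\phi} = \overline{A\Re\phi - \i A\Im\phi} = A\Re\phi + \i A\Im\phi = A\phi = \gamma\phi$ directly and then conjugates, rather than splitting the eigenvalue equation into the two real identities $Au = \alpha u - \beta v$ and $Av = \beta u + \alpha v$—but the mathematical content is identical.
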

\begin{proof}
	By linearity of $A$,
	\[ \overline{A \overline \phi} = \overline {A \Re \phi- \i  A \Im \phi}  = A \Re \phi + \i A \Im \phi =  A \phi = \gamma \phi,\]
	which establishes the result.
\end{proof}

We are now ready to state the main result of this section, which characterizes $\sigma(L)$.
Define functions $\psi^{\pm}(\gamma) := \int_0^{\pm \infty} U'(\xi) e^{\mp 2 \gamma \xi - U(\xi)} \, d \xi$, i.e.,
\begin{equation} \label{eq:psi+-} \psi^{+}(\gamma) := \int_0^{\infty} U'(\xi) e^{- 2 \gamma \xi - U(\xi)} \, d \xi \quad \text{and} \quad \psi^-(\gamma) := -\int_{-\infty}^0 U'(\xi) e^{2 \gamma \xi - U(\xi)}\, d \xi, \quad \gamma \in \C, \end{equation}
and 
\begin{equation} \label{eq:Z} Z(\gamma) := 1 - \psi^+(\gamma) \psi^-(\gamma), \quad \gamma \in \C.\end{equation}
Define the set
\begin{equation}
\label{eq:eigenvalues} \Sigma := \{ \gamma \in \C:  Z(\gamma) = 0 \}.
\end{equation}
Under Assumption~\ref{ass:compactness} the functions $\psi^{\pm}$ are well defined, using Lemma~\ref{lem:integrals}.
\begin{theorem}[Spectrum, resolvent and eigenfunctions of the zigzag generator]
	\label{thm:spectrum}
	Suppose Assumptions~\ref{ass:sobolev}, \ref{ass:compactness} and \ref{ass:unimodal} are satisfied. Furthermore assume $\lambda_{\refr}(x) = 0$ for all $x$. Then $\sigma(L) = \sigma_p(L) = \Sigma$.
	
	If $\gamma \in \rho(L)$ and $h \in L^2(\mu)$ then $f = (\gamma -L)^{-1} h$ is given by
	\begin{align} \label{eq:resolvent}
	\nonumber f^+(x) & = \begin{cases} e^{\gamma x} \left(k^+ + \int_x^0 e^{-\gamma \xi} h^+(\xi) \, d \xi \right), \quad & x \leq 0, \\e^{\gamma x + U(x)} \left(k^+ - \int_0^x e^{-\gamma \xi - U(\xi)} \left[ h^+(\xi) + U'(\xi) e^{-\gamma \xi} \left(k^- + \int_0^{\xi} e^{\gamma \eta} h^-(\eta) \, d \eta \right) \right] \, d \xi \right), \quad & x > 0,\end{cases} \\
	f^-(x) &= \begin{cases} e^{-\gamma x + U(x)} \left( k^- - \int_x^0 e^{\gamma \xi - U(\xi)} \left[ h^-(\xi) - U'(\xi) e^{\gamma \xi} \left( k^+ + \int_{\xi}^0 e^{-\gamma \eta} h^+(\eta) \, d \eta \right) \right] \, d \xi \right), \quad & x \leq 0, \\
	e^{-\gamma x} \left( k^- + \int_0^x e^{\gamma \xi} h^-(\xi) \, d \xi \right), \quad & x > 0.\end{cases}
	\end{align}
	The constants $k^{\pm} = k^{\pm}(\gamma; h)$ are given by
	\begin{equation} \label{eq:constants-k}
	\begin{pmatrix} k^+(\gamma;h) \\ k^-(\gamma;h) \end{pmatrix}= \frac 1 {Z(\gamma)} \begin{pmatrix} 1 & \psi^+(\gamma) \\ \psi^-(\gamma) & 1 \end{pmatrix}K(\gamma) h,
	\end{equation}
	with $K(\gamma) : L^2(\mu) \rightarrow \C^2$ the bounded linear mapping  given by
	\begin{equation} \label{eq:integral-operator} K(\gamma) h = \begin{pmatrix} \int_0^{\infty} e^{-\gamma \xi - U(\xi)} h^+ (\xi) \, d \xi + \int_0^{\infty} U'(\xi) e^{-2 \gamma \xi - U(\xi)} \int_0^{\xi} e^{\gamma \eta} h^-(\eta) \, d \eta \,  d \xi \\
	\int_{-\infty}^0 e^{\gamma \xi - U(\xi)} h^-(\xi) \, d \xi - \int_{-\infty}^0 U'(\xi) e^{2 \gamma \xi - U(\xi)} \int_{\xi}^0 e^{-\gamma \eta} h^+(\eta) \, d \eta \, d \xi \end{pmatrix}.\end{equation}
	
	For every $\gamma \in \sigma_p(L)$, the corresponding eigenfunctions form a one-dimensional space spanned by the function $f_{\gamma} \in \mathcal D(L)$ defined by
	\begin{align*}
	f^+_{\gamma}(x) & = \psi^+(\gamma) e^{\gamma x}, \quad (x \leq 0), & \quad f^+_{\gamma}(x) & = e^{\gamma x + U(x)}  \int_x^{\infty} U'(\xi) e^{-2 \gamma \xi - U(\xi)} \, d \xi, \quad (x \geq 0), \\
	\quad f^-_{\gamma}(x) & = e^{-\gamma x}, \quad (x \geq 0),& \quad  f^-_{\gamma}(x) &= - \psi^+(\gamma)e^{-\gamma x + U(x)} \int_{-\infty}^x U'(\xi) e^{2 \gamma \xi - U(\xi)} \, d \xi, \quad (x \leq 0).
	\end{align*}
	
	If $\gamma_1, \gamma_2\in \sigma_p(L)$, $\gamma_1 \neq \gamma_2$, with corresponding eigenfunctions $f_1$, $f_2$ then either $\gamma_1 = \overline \gamma_2$ or $\langle F f_1, f_2 \rangle = 0$.
\end{theorem}

%\todo{Obtain multiplicities through contour integration of resolvent.}

\begin{proof}
	
	\emph{Claim: $\Sigma \subset \sigma(L)$}.
	
	\emph{Proof of claim}.
	We will prove the equivalent statement that $\rho(L) \subset \C\setminus \Sigma$.
	Suppose $\gamma \in \rho(L)$. In particular, for all $h \in L^2(\mu)$, there is a solution $f \in \mathcal D(L)$ to the equation $(\gamma - L) f = h$.$f$ satisfies the coupled system of ordinary differential equations
	\begin{align*}
	\gamma f^+ - \partial_x f^+ - \lambda^+ (f^- - f^+) & = h^+, \\
	\gamma f^- + \partial_x f^- - \lambda^- (f^+ - f^-) & = h^-.
	\end{align*}
% 	This holds in particular for functions $h \in L^2(\mu)$ which have compact support. 
	For $x \leq 0$, we have $\lambda^+(x) = 0$ and $\lambda^-(x) = -U'(x)$. We find that 
	\[ f^+(x) = e^{\gamma x} \left( k^+  + \int_{x}^0 e^{-\gamma \xi} h^+(\xi) \, d \xi \right), \quad x \leq 0, \] for some constant $k^+$.
	By variation of constants, for $x \leq 0$,
	\begin{align*} f^-(x) & = e^{-\gamma x + U(x)} \left( k^- - \int_{x}^0 e^{\gamma \xi - U(\xi)} \left[ h^-(\xi) + \lambda^-(\xi) f^+(\xi)\right] \, d \xi \right) \\
	& =  e^{-\gamma x + U(x)} \left( k^- - \int_{x}^0 e^{\gamma \xi - U(\xi)} \left[ h^-(\xi) -U'(\xi) e^{\gamma \xi} \left(k^+ + \int_{\xi}^0 e^{-\gamma \eta} h^+(\eta)\, d \eta \right)\right] \, d \xi \right).
	\end{align*}
	Similarly, for $x \geq 0$, we obtain
	\begin{align*}
	f^-(x) & = e^{-\gamma x} \left( k^- + \int_0^x e^{\gamma \xi} h^-(\xi) \, d \xi \right), \\
	f^+(x) & = e^{\gamma x + U(x)} \left(k^+ - \int_0^x e^{-\gamma \xi - U(\xi)}[ h^+(\xi) + U'(\xi) f^-(\xi)] \, d \xi\right) \\
	& = e^{\gamma x + U(x)} \left(k^+ - \int_0^x e^{-\gamma \xi - U(\xi)}\left[ h^+(\xi) + U'(\xi) e^{-\gamma \xi} \left( k^- + \int_0^\xi e^{\gamma \eta} h^-(\eta) \, d \eta \right)\right] \, d \xi \right).
	\end{align*}
	Here no new integration constants are introduced because we required continuity of $f$. Indeed by assumption $f \in \mathcal D(L)$ and thus $f$ is  continuous.
	
	By Lemma~\ref{lem:integrals}, we have that all the integrals occuring in the expressions for $f^+(x)$ for $x \geq 0$ and (analogously) $f^-(x)$ for $x \leq 0$, converge as $|x| \rightarrow \infty$. In order for $f \in L^2(\mu)$, it follows that necessarily
		\begin{align} \nonumber  k^+ - \int_0^{\infty} e^{-\gamma \xi - U(\xi)} \left[h^+(\xi) + U'(\xi) e^{-\gamma \xi} \left( k^- + \int_0^\xi e^{\gamma \eta} h^-(\eta) \, d \eta \right) \right] \, d \xi & =0 \quad & \text{and} \\
	\label{eq:relations-k+-} k^- -  \int_{-\infty}^0 e^{\gamma \xi - U(\xi)} \left[ h^-(\xi) -U'(\xi) e^{\gamma \xi} \left(k^+ + \int_{\xi}^0 e^{-\gamma \eta} h^+(\eta)\, d \eta \right)\right] \, d \xi & = 0,
	\end{align}
for otherwise the multiplication by $e^{\pm \gamma x + U(x)}$ in the expression for $f$ would lead to the norm of $f$ being equal to $\infty$. (Note that  the exponential growth by $e^{\pm \Re \gamma x}$ in $f^+$ for $x  \geq 0$ and $f^-$ for $x \leq 0$  is not problematic since it is compensated by $e^{-U(x)}$ in the $L^2(\mu)$ inner product, which by Assumption~\ref{ass:compactness} will decay faster than any exponential.)
	Condition~\eqref{eq:relations-k+-} can be written in terms of a linear system for $k^{\pm}$:
	\begin{equation} \label{eq:system-k+-} \begin{pmatrix} 1 & - \psi^+(\gamma) \\ -\psi^-(\gamma) & 1 \end{pmatrix} \begin{pmatrix} k+ \\ k^- \end{pmatrix} = K(\gamma) h \end{equation}
	where, for $\gamma \in \C$,  $K(\gamma) : L^2(\mu) \rightarrow \C^2$ is defined by~\eqref{eq:integral-operator}. The boundedness of $K$ follows by the proof of  Lemma~\ref{lem:integrals}.
	
	%\[\begin{pmatrix} h^+ \\ h^- \end{pmatrix}\begin{pmatrix} \int_0^{\infty} e^{-\gamma \xi - U(\xi)} \left[ h^+(\xi) + U'(\xi) e^{-\gamma \xi} \int_0^{\xi} e^{\gamma \eta} h^-(\eta) d \eta \right] \, d \xi \\
	%\int_{-\infty}^0 e^{\gamma \xi - U(\xi)} \left[ h^-(\xi) - U'(\xi) e^{\gamma \xi} \int_{\xi}^0 e^{-\gamma \eta} h^+(\eta) \, d \eta \right] \, d \xi \end{pmatrix}.
	%\]
	Now suppose $\gamma \in \Sigma$. In this case the system~\eqref{eq:system-k+-} is singular. However the linear map $K(\gamma) : L^2(\mu) \rightarrow \C^2$ is surjective, even when we restrict the domain to the functions in $L^2(\mu)$ having compact support. 
	Indeed, we can produce the vector $\begin{pmatrix}1 & 0\end{pmatrix}^T$ on the right hand side by choosing 
	\[ h^-(x) = 0 \quad \text{and} \quad h^+(x) = e^{\gamma x + U(x)} \1_{[0,1]}(x), \quad x \in \R,\]
	and in an analogous way we can choose $h$ so that the vector $\begin{pmatrix}0 & 1 \end{pmatrix}^T$ is obtained on the right hand side. Thus the right hand side as a function of $h$ having compact support has range $\C^2$, whereas the left hand side can only span a one-dimensional subspace due to the assumption $\gamma \in \Sigma$. 
	It follows that there are choices $h$ such that no solution to the resolvent system $(\gamma  - L) f = h$ exist, so that $\gamma \notin \rho(L)$; a contradiction.
	The solution of~\eqref{eq:system-k+-} yields the stated expression for $k^{\pm}$.
	
	%By the condition~\eqref{eq:relations-k+-}, we obtain the stated expression for $f$. 
	
	\emph{Claim: $\sigma_p(L) \subset \Sigma$}.
	
	\emph{Proof of claim}.
	Suppose $\gamma \in \sigma_p(L)$. There exists a function $f \in \mathcal D(L)$ such that $L f = \gamma f$. This corresponds to the system
	\begin{align*}
	\gamma f^+ - \partial_x f^+ - \lambda^+ (f^- - f^+) & = 0, \\
	\gamma f^- + \partial_x f^- - \lambda^- (f^+ - f^-) & = 0.
	\end{align*}
	Solving for $f^+$ for $x \leq 0$ yields $f^+(x) = c^+ e^{\gamma x}$, and similarly $f^-(x) = c^- e^{-\gamma x}$ for $x \geq 0$ for some constants $c^{\pm} \in \C$. Then using variation of constants and insisting upon continuity in $x = 0$, yields
	\begin{align*}
	f^+(x) & = e^{\gamma x + U(x)} \left(c^+ - c^- \int_0^x U'(\xi) e^{-2 \gamma \xi - U(\xi)} \, d \xi \right), \quad & x \geq 0, \\
	f^-(x) & = e^{-\gamma x + U(x)} \left(c^- + c^+ \int_x^0 U'(\xi) e^{2 \gamma \xi - U(\xi)} \, d \xi \right), \quad & x \leq 0.
	\end{align*}
	Again the integrals converge as $|x| \rightarrow \infty$ by Lemma~\ref{lem:integrals}.  Now in order for $f \in L^2(\mu)$, we require that the terms within parantheses in the expressions for $f^{\pm}$ vanish as $|x| \rightarrow \infty$. This yields the conditions
	\begin{equation} \label{eq:eigenfunctions-constants} 
	c^+ = c^- \psi^+(\gamma), \quad  \text{and}  \quad 
	c^- = c^+ \psi^-(\gamma).
	\end{equation}
	This system admits a non-trivial solution if and only if $\gamma \in \Sigma$. The choice for $f$ in the statement of the proposition is obtained by taking $c^- = 1$ and $c^+ = \psi^+(\gamma)$.
	
	We have established that $\Sigma \subset \sigma(L)$ and $\sigma_p(L) \subset \Sigma$. Since $L$ has compact resolvent (Corollary~\ref{cor:compact-resolvent}), $\sigma_p(L) = \sigma(L)$ and the proof is complete.
	%
	%We only have to  consider welldefinedness of the different terms in the first equation; the argument is analogous for the second equation. The first term
	%$\int_0^{\infty} e^{-\gamma \xi - U(\xi)} h^+(\xi) \, d \xi$
	%is welldefined by Lemma~\ref{lem:integral-converges}.
	%Next the term
	%\[ \int_0^{\infty} U'(\xi) e^{-2 \gamma \xi - U(\xi)} \, d \xi \]
	%is welldefined by the integrability required in Assumption~\ref{ass:resolvent-equation}. Indeed, by partial integration,
	%\begin{align*}
	% \int_0^{\infty} \left| U'(\xi) e^{-2  \gamma \xi - U(\xi)} \right| \, d \xi
	%= e^{-U(0)} - 2 \Re \gamma \int_0^{\infty} e^{-2 (\Re \gamma) \xi- U(\xi)} \, d \xi < \infty.
	%\end{align*}
	
	The final statement of the theorem follows by an application of Lemma~\ref{lem:J-selfadjoint} (iii).
\end{proof}

\begin{corollary}
	\label{cor:spectral-projection}
	The spectral projection corresponding to $\gamma \in \sigma(L)$ is given by
	\begin{equation} \label{eq:spectral-projection} P_{\gamma} = \frac 1 {2 \pi \i} \int_{\Gamma} \frac 1 { Z(\zeta)} M(\zeta) \begin{pmatrix}1 & \psi^+(\zeta) \\ \psi^-(\zeta) & 1 \end{pmatrix} K(\zeta) \, d \zeta,  \end{equation}
	where $\Gamma$ is a Jordan contour in $\C$ enclosing the eigenvalue $\gamma$ and no other eigenvalues, and where the family of bounded linear operators $(M(\zeta))_{\zeta \in \rho(L)}$ is given for fixed $\zeta \in \rho(L)$ as a bounded operator $M(\zeta) : \C^2 \rightarrow L^2(\mu)$  by 
% 	\[ (M(\zeta)k)^+ = M^+(\zeta) k, \quad (M\zeta)k)^- = M^-(\zeta) k, \] 
% 	where \begin{pmatrix} M^+(\zeta) k \\ M^-(\zeta) k\end{pmatrix}$
% 	with $M^+(\zeta), M^-(\zeta) : \C^2 \rightarrow L^2(\nu)$ given by
	\begin{align*} (M(\zeta)a)^+(x) & := M^+(\zeta) a (x)  := \begin{cases} e^{\zeta x} a^+, \quad & x \leq 0, \\
	e^{\zeta x + U(x)} \left( a^+ - a^- \int_0^x U'(\xi) e^{-2 \zeta \xi - U(\xi)} \, d \xi \right),\quad & x > 0, \end{cases} \\
	(M(\zeta)a)^-(x) & := M^-(\zeta) a(x) := \begin{cases} e^{-\zeta x + U(x)} \left(a^- + a^+ \int_x^0 U'(\xi) e^{2 \zeta x - U(\xi)} \, d \xi \right), \quad & x \leq 0, \\
	e^{-\zeta x} a^-,  \quad & x > 0,
	\end{cases}
	\end{align*}
	for $a = \begin{pmatrix} a^+ \\ a^- \end{pmatrix} \in \C^2$.
\end{corollary}
Note that the integral in~\eqref{eq:spectral-projection} is a concatenation of bounded linear operators and as such the integrand is a well defined mapping from $\rho(L)$ into the bounded operators on $L^2(\mu)$, which makes the integral over closed contours well defined as a Bochner integral.

%\todo{at a suitable location explain the notation $\begin{pmatrix}c^+ \\ c^- \end{pmatrix} \in \C^2$.}
\begin{proof}
	By \cite[proof of Theorem 6.17]{Kato1995}, the spectral projections are given for $\gamma \in \sigma(L)$ by
	\[ P_{\gamma}  = \frac 1 {2 \pi \i} \int_{\Gamma} (\zeta - L)^{-1} \, d \zeta,\]
	where $\Gamma$ is a contour enclosing $\gamma$ as stated.
	Note that the terms in the resolvent expression~\eqref{eq:resolvent} which do not involve $k^{\pm}$ amount to integral operators by an analytic integral kernel. We may check that such terms are holomorphic as operator valued functions of the complex-valued argument. Indeed, as discussed in \cite[Section VII.1.1]{Kato1995} weak and strong notions of holomorphicity coincide, and we may check that for example the operator $T^+(\zeta)$, constituting the non-$k^{\pm}$-dependent terms in $((\zeta -L)^{-1}h)^+$, given by 
	\[ T^+(\zeta) h(x) = \begin{cases} e^{\zeta x} \int_x^0 e^{-\zeta \xi} h^+(\xi) \, d \xi, \quad &x \leq 0, \\
	                 e^{\zeta x + U(x)} \left( - \int_0^x e^{-\zeta \xi - U(\xi)} \left[h^+(\xi) + U'(\xi) e^{-\zeta \xi} \int_0^{\xi} e^{\zeta \eta} h^-(\eta) \, d \eta \right] \right), \quad & x > 0,
	                \end{cases} \]
	                is holomorphic as a function of $\zeta$ by verifying that $\langle g, T^+(\zeta) h \rangle_{L^2(\nu)}$ is holomorphic for indicator functions $g$, $h$. Analogously this can be verified for $((\zeta -L)^{-1}h)^-$.

	Thus, if we perform the complex contour integration all terms in~\eqref{eq:resolvent} which do not involve $k^{\pm}$ vanish, since such terms are holomorphic in $\zeta$. The constants $k^{\pm}$ are given by~\eqref{eq:constants-k}.
	Combining these observations yields the stated expression.
\end{proof}

\begin{corollary}[Simple roots] 
	\label{cor:simple-root}
	Suppose the assumptions of Theorem~\ref{thm:spectrum} are satisfied and $\gamma$ is a simple root of $Z$, so that $\gamma \in \sigma(L)$ and $Z'(\gamma) \neq 0$. The corresponding spectral projection has rank one and is given by
	\[ P_{\gamma} h =  \frac{ \begin{pmatrix} 1  & \psi^+(\gamma) \end{pmatrix}K(\gamma) h}{Z'(\gamma) \psi^+(\gamma)} f_{\gamma}\]
	where $f_{\gamma}$ is the eigenfunction corresponding to $\gamma$ as introduced in Theorem~\ref{thm:spectrum}.
	An alternative expression for $P_{\gamma}$ is given by
	\[ P_{\gamma} h = \frac{\langle h, F \overline f_{\gamma} \rangle}{\langle f_{\gamma}, F \overline f_{\gamma} \rangle} f_{\gamma}.\]
\end{corollary}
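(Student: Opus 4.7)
The starting point is the contour integral expression for the spectral projection from Proposition~\ref{prop:spectral-projection}. Since $\gamma$ is a simple zero of $Z$, the function $\zeta \mapsto 1/Z(\zeta)$ has a simple pole at $\gamma$ with residue $1/Z'(\gamma)$, while $M(\zeta)$, $K(\zeta)$ and $\psi^{\pm}(\zeta)$ are holomorphic in a neighbourhood of $\gamma$. A direct application of the residue theorem therefore collapses the contour integral to
\[ P_\gamma h = \frac{1}{Z'(\gamma)}\, M(\gamma) \begin{pmatrix} 1 & \psi^+(\gamma) \\ \psi^-(\gamma) & 1 \end{pmatrix} K(\gamma)\, h. \]
The key observation is that $Z(\gamma)=0$ forces $\psi^+(\gamma)\psi^-(\gamma)=1$, so the middle matrix factors as the outer product
\[ \begin{pmatrix} 1 & \psi^+(\gamma) \\ \psi^-(\gamma) & 1 \end{pmatrix} = \begin{pmatrix} 1 \\ \psi^-(\gamma) \end{pmatrix} \begin{pmatrix} 1 & \psi^+(\gamma) \end{pmatrix}, \]
which exhibits $P_\gamma$ as a rank-one operator.

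To pin down the range, I would verify that $M(\gamma)(1,\psi^-(\gamma))^{\mathsf T}$ is a scalar multiple of the eigenfunction $f_\gamma$. This is a direct case-by-case comparison of the piecewise definitions of $M(\gamma)$ and $f_\gamma$ from Theorem~\ref{thm:spectrum}: rewriting $\int_0^x U'(\xi) e^{-2\gamma\xi-U(\xi)}\,d\xi = \psi^+(\gamma) - \int_x^\infty U'(\xi) e^{-2\gamma\xi-U(\xi)}\,d\xi$ on the positive half-line, using the symmetric identity on the negative half-line, and invoking $\psi^+(\gamma)\psi^-(\gamma)=1$, one obtains $M(\gamma)(1,\psi^-(\gamma))^{\mathsf T} = f_\gamma/\psi^+(\gamma)$. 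Substitution into the residue expression then produces the first displayed formula in the corollary.

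For the alternative expression, the cleanest route is to identify the row functional $h \mapsto (1,\psi^+(\gamma))K(\gamma)h$ as an $L^2(\mu)$-inner product. Expanding $K(\gamma) h$ according to~\eqref{eq:integral-operator} and swapping the order of integration by Fubini in the two mixed terms yields
\[ \begin{pmatrix} 1 & \psi^+(\gamma)\end{pmatrix} K(\gamma)\, h = \int_E h\,(F f_\gamma)\,d\mu = \langle h, F\overline{f_\gamma}\rangle. \]
Substituting back gives $P_\gamma h = \langle h, F\overline{f_\gamma}\rangle\, f_\gamma / (Z'(\gamma)\psi^+(\gamma))$; specializing $h=f_\gamma$ and using $P_\gamma f_\gamma = f_\gamma$ identifies the normalizing constant as $\langle f_\gamma, F\overline{f_\gamma}\rangle = Z'(\gamma)\psi^+(\gamma)$, which is in particular nonzero. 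Consistently, Lemma~\ref{lem:general-result-eigenfunctions} together with Lemma~\ref{lem:J-selfadjoint}(iv) confirms that $F\overline{f_\gamma}$ is the eigenfunction of $L^\star$ with eigenvalue $\overline\gamma$, i.e.\ exactly the left-eigenvector one expects in a biorthogonal projection formula at a simple eigenvalue. The only real work is the piecewise bookkeeping in the identification $M(\gamma)(1,\psi^-(\gamma))^{\mathsf T}\propto f_\gamma$ and in the Fubini exchange producing $\langle h, F\overline{f_\gamma}\rangle$; everything else is formal residue calculus driven by $Z(\gamma)=0$.
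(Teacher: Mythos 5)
Your proof of the first displayed formula follows essentially the same route as the paper: apply the residue theorem to Proposition~\ref{prop:spectral-projection}, exploit $\psi^+(\gamma)\psi^-(\gamma)=1$ to factor the singular matrix as an outer product, and identify $M(\gamma)\begin{pmatrix}1 & \psi^-(\gamma)\end{pmatrix}^{\mathsf T}$ piecewise with $f_\gamma/\psi^+(\gamma)$. That part is a match.

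For the alternative expression you take a genuinely different, more computational route. You verify by Fubini that $\begin{pmatrix}1 & \psi^+(\gamma)\end{pmatrix}K(\gamma)h = \langle h, F\overline{f_\gamma}\rangle$, and then determine the normalizing constant $Z'(\gamma)\psi^+(\gamma)=\langle f_\gamma, F\overline{f_\gamma}\rangle$ by applying the established formula to $h=f_\gamma$ and using $P_\gamma f_\gamma = f_\gamma$. The paper instead argues abstractly: since $P_\gamma$ has rank one, $P_\gamma h = \langle h, g\rangle f_\gamma$ for some $g$; then $P_\gamma^\star m = \langle m, f_\gamma\rangle g$, so $g$ is an eigenfunction of $L^\star$ with eigenvalue $\overline\gamma$ (since $P_\gamma^\star$ is the spectral projection of $L^\star$ at $\overline\gamma$); Lemma~\ref{lem:general-result-eigenfunctions} and Lemma~\ref{lem:J-selfadjoint}(iv) then force $g$ to be proportional to $F\overline{f_\gamma}$, with the constant fixed by $P_\gamma^2 = P_\gamma$. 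Your computation is self-contained (it avoids invoking the two lemmas) but requires the explicit Fubini bookkeeping; the paper's argument is structural and shorter, deferring exactly your Fubini identity to Remark~\ref{rem:expression-Z-prime} as a ``direct computation.'' Both are correct, and you even acknowledge the structural picture at the end, so the two approaches are mutually illuminating rather than in conflict.
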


\begin{proof}
	Note that $Z(\gamma) = 0$ yields $\psi^-(\gamma) = 1/\psi^+(\gamma)$. The Cauchy residue theorem applied to the expression for $P_{\gamma}$ of Corollary~\ref{cor:spectral-projection} yields that
	\[ P_{\gamma} = \frac 1 {Z'(\gamma)} M(\gamma) \begin{pmatrix} 1 \\  \psi^-(\gamma) \end{pmatrix} \begin{pmatrix}1 & \psi^+(\gamma) \end{pmatrix} K(\gamma).\]
	Next we compute
	\begin{align*}
	M^+(\gamma) \begin{pmatrix}1 \\ \psi^-(\gamma) \end{pmatrix}(x)& = \begin{cases} e^{\gamma x}, \quad & x \leq 0, \\
	e^{\gamma x + U(x)} \left(1 - \psi^-  \int_0^x U'(\xi) e^{-2 \gamma \xi - U(\xi)} \, d \xi \right),\quad & x > 0, \end{cases} \\
	& =  \begin{cases} e^{\gamma x}, \quad & x \leq 0, \\
	\psi^-(\gamma) e^{\gamma x + U(x)}\int_x^{\infty} U'(\xi) e^{-2 \gamma \xi - U(\xi)} \, d \xi,\quad & x > 0, \end{cases} 
	\end{align*}
	and similarly
	\[ M^-(\gamma) \begin{pmatrix} 1 \\ \psi^-(\gamma) \end{pmatrix}(x) = \begin{cases}  - e^{-\gamma x + U(x)} \int_{-\infty}^x  U'(\xi) e^{2 \gamma x - U(\xi)} \, d \xi, \quad & x \leq 0, \\
	\psi^-(\gamma)e^{-\gamma x},  \quad & x > 0.
	\end{cases}\]
	Multiplying by $\frac{\psi^+(\gamma)}{\psi^+(\gamma)}$ and using that $\psi^+(\gamma) \psi^-(\gamma) = 1$ yields the stated result.
	
	Furthermore since $P_{\gamma}$ is of rank one and has range $\operatorname{span} \{f_{\gamma}\}$, there exists a $g \in L^2(\mu)$ such that $P_{\gamma} h = \langle h, g \rangle f_{\gamma}$.  
	We have $P_{\gamma}^{\star} m = \langle m, f_{\gamma} \rangle g$, for $m \in L^2(\mu)$.
	Since $P_{\gamma}^{\star}$ is the spectral projection corresponding to eigenvalue $\overline \gamma$ of $L^{\star}$ (see \cite[Theorem III.6.22]{Kato1995}), it follows that $g$ is an eigenfunction of $L^{\star}$ with eigenvalue $\overline \gamma$. By Lemma~\ref{lem:general-result-eigenfunctions}, $\overline g$ is an eigenfunction of $L^{\star}$ with eigenvalue $\gamma$, and by Lemma~\ref{lem:J-selfadjoint} (iv), $F \overline g$ is an eigenfunction of $L$ with eigenvalue $\gamma$, i.e., $F \overline g$ is parallel to $f_{\gamma}$. It follows that $F \overline g = \alpha f_{\gamma}$ for some $\alpha \in \C$, i.e., $g = \overline{\alpha} F \overline {f_{\gamma}}$. From $P_{\gamma}^2 = P_{\gamma}$ it follows that $\langle f_{\gamma}, g \rangle = 1$ which yields the correct value for $\alpha$.
\end{proof}
%
%\todo{What happens if a zero of $Z(\gamma)$ has multiplicity $> 1$? Or does this show in some way that all roots are simple?}

\begin{remark} \label{rem:expression-Z-prime}
	Corollary~\ref{cor:simple-root} yields an expression for $Z'(\gamma)$ for $\gamma \in \sigma(L)$, for $Z'(\gamma)\neq 0$:
	\[ Z'(\gamma) = \frac{\langle f_{\gamma}, F \overline f_{\gamma} \rangle}{\langle h, F \overline f_{\gamma} \rangle}\frac{\begin{pmatrix} 1  & \psi^+(\gamma) \end{pmatrix}K(\gamma) h}{\psi^+(\gamma)}.\] It may be computed directly that $\begin{pmatrix} 1  & \psi^+(\gamma) \end{pmatrix}K(\gamma) h = \langle h, F \overline f_{\gamma} \rangle$, so that 
	\[Z'(\gamma) = \langle f_{\gamma}, F \overline f_{\gamma} \rangle/\psi^+(\gamma) = \psi^-(\gamma) \langle f_{\gamma}, F \overline f_{\gamma} \rangle.\] In fact this expression for $Z'(\gamma)$ remains valid if $Z'(\gamma) = 0$, which may be verified by direct computation, showing that $\langle f_{\gamma}, F \overline f_{\gamma} \rangle = 0$, and making using of the following lemma. 
	%	\joris{But perhaps this yields a result that $Z'(\gamma) \neq 0$.}
\end{remark}

\begin{lemma}
	\label{lem:psi}
	Suppose the assumptions of Theorem~\ref{thm:spectrum} hold.  Then the functions $\psi^{\pm}$ (as defined in~\eqref{eq:psi+-}) are holomorphic, and satisfy
	\begin{equation} \label{eq:psi_alternative} \psi^{\pm}(\gamma) = 1 \mp 2 \gamma \int_0^{\pm \infty} e^{\mp 2 \gamma \xi - U(\xi)}\, d \xi.\end{equation}
	Furthermore
	\begin{equation} \label{eq:dpsi} \frac{d}{d \gamma} \psi^{\pm}(\gamma) = \mp 2 \int_0^{\pm \infty} e^{\mp 2 \gamma \xi - U(\xi)} \, d \xi + 4 \gamma \int_0^{\pm \infty} \xi e^{\mp 2 \gamma \xi - U(\xi)} \, d \xi, \quad \gamma \in \C.\end{equation}
\end{lemma}
\begin{proof}
	This follows by partial integration.
\end{proof}

\begin{proposition}[Dominant eigenvalue]
	\label{prop:dominant-eigenvalue}
	Suppose the assumptions of Theorem~\ref{thm:spectrum} hold. Then 0 is a simple eigenvalue of $L$ (i.e. its spectral projection has rank one). The spectral projection maps onto the space of constant functions.
\end{proposition}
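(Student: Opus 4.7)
The plan is to apply Corollary~\ref{cor:simple-root} by showing that $\gamma=0$ is a root of $Z$ whose derivative $Z'(0)$ is nonzero, and then to identify the eigenfunction $f_0$ as a constant, so that the range of the spectral projection is $\operatorname{span}\{\mathbf{1}\}$.

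First I would check that $0\in\sigma(L)$. Using the alternative representation~\eqref{eq:psi_alternative} from Lemma~\ref{lem:psi}, one reads off $\psi^{\pm}(0)=1$, so $Z(0)=1-\psi^+(0)\psi^-(0)=0$; by Theorem~\ref{thm:spectrum} this gives $0\in\sigma_p(L)=\sigma(L)$. Substituting $\gamma=0$ into the eigenfunction formula of Theorem~\ref{thm:spectrum} and using $\int_x^{\infty} U'(\xi)e^{-U(\xi)}\,d\xi=e^{-U(x)}$ (and symmetrically for the negative half-line, using Assumption~\ref{ass:compactness} to ensure $e^{-U(x)}\to 0$ at $\pm\infty$) yields $f_0^\pm(x)\equiv 1$, i.e.\ $f_0=\mathbf{1}$. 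This is consistent with the immediate observation $L\mathbf{1}=0$.

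The main step is to verify that $0$ is a simple root of $Z$. The cleanest route is to invoke Remark~\ref{rem:expression-Z-prime}, which gives
\[ Z'(\gamma)=\psi^-(\gamma)\,\langle f_\gamma, F\overline{f_\gamma}\rangle. \]
At $\gamma=0$ we have $\psi^-(0)=1$ and $f_0=F f_0=\overline{f_0}=\mathbf{1}$, hence $Z'(0)=\langle \mathbf{1},\mathbf{1}\rangle=\|\mathbf{1}\|^2_{L^2(\mu)}=2\nu(\R)$, which is strictly positive and finite by Assumption~\ref{ass:sobolev} (since $e^{-U}\in L^1(\R)$). The same value can also be obtained by direct differentiation: from Lemma~\ref{lem:psi},
\[ (\psi^\pm)'(0)=\mp 2\int_0^{\pm\infty}e^{-U(\xi)}\,d\xi, \]
and combining with $Z'(\gamma)=-(\psi^+)'(\gamma)\psi^-(\gamma)-\psi^+(\gamma)(\psi^-)'(\gamma)$ gives $Z'(0)=2\int_{\R}e^{-U}\,d\xi=2\nu(\R)$.

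Finally, since $Z'(0)\neq 0$, Corollary~\ref{cor:simple-root} applies: the spectral projection $P_0$ has rank one and acts by $P_0 h=c(h)\,f_0$ for a bounded linear functional $c$. Because $f_0=\mathbf{1}$, the range of $P_0$ is precisely the one-dimensional subspace of constant functions, which is the claim. I expect no significant obstacle here; the only delicate point is justifying finiteness of $Z'(0)$, which is immediate from integrability of $e^{-U}$.
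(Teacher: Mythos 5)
Your proof is correct and takes essentially the same approach as the paper: verify $\psi^{\pm}(0)=1$ so $Z(0)=0$, compute $Z'(0)=2\nu(\R)>0$, and identify the eigenfunction at $\gamma=0$ as constant. The paper argues directly via the derivative formula~\eqref{eq:dpsi} from Lemma~\ref{lem:psi} rather than first invoking Remark~\ref{rem:expression-Z-prime}, but you supply that direct computation as well, so the arguments coincide.
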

\begin{proof}
	From~\eqref{eq:psi_alternative}, $\psi^{\pm}(0) = 1$, so that $Z(0) = 1 - \psi^+(0) \psi^-(0) = 0$.
	Furthermore, using~\eqref{eq:dpsi}, 
	\[ \frac{d}{d \gamma} Z(0) = - \psi^+(0) \frac{d}{d \gamma} \psi^-(0) - \psi^-(0) \frac{d}{d \gamma} \psi^+(0)= 2 \int_{-\infty}^{\infty} e^{-U(\xi)} \, d \xi \neq 0.\]
	The range of the spectral projection coincides with the span of the eigenfunction corresponding to the eigenvalue 0, which is constant  by Theorem~\ref{thm:spectrum}.
\end{proof}

Define the closed subspace $L^2_0(\mu) := \{ f \in L^2(\mu) : \mu(f) = 0 \} = \1^{\perp}$. From Proposition~\ref{prop:dominant-eigenvalue} and Corollary~\ref{cor:simple-root} it follows that $L^2_0(\mu) = (I-P_0) L^2(\mu)$, where $P_0 = \langle \cdot, \1 \rangle / \langle \1, \1 \rangle \1$ denotes the spectral projection corresponding to the eigenvalue 0. Since spectral projections associated with $L$ commute with the semigroup generated by $L$, it follows that the semigroup $(P(t))_{t \geq 0}$ leaves $L^2_0(\mu)$ invariant. The restriction of $(P(t))_{t \geq 0}$ to $L^2_0(\mu)$ has generator $L_0$, defined to be the restriction of $L$ to the domain $\mathcal D(L_0) = \mathcal D(L) \cap L^2_0(\mu)$. This establishes the following result.

\begin{proposition}[Poisson equation]
	\label{prop:poisson-equation}
	Suppose the assumptions of Theorem~\ref{thm:spectrum} are satisfied. Then $\sigma(L_0) = \sigma_p(L_0) = \sigma(L) \setminus\{0\}$. In particular $L_0$ admits a bounded inverse.
\end{proposition}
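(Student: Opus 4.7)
The plan is to view Proposition~\ref{prop:poisson-equation} as a straightforward consequence of the spectral decomposition induced by the simple isolated eigenvalue $0$, combined with the compactness of the resolvent of $L$.

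First I would invoke Corollary~\ref{cor:compact-resolvent}, which under the assumptions of Theorem~\ref{thm:spectrum} ensures that $(\gamma - L)^{-1}$ is compact for every $\gamma \in \rho(L)$ and that $\sigma(L)$ consists entirely of isolated eigenvalues of finite multiplicity. In particular $0 \in \sigma(L)$ is isolated, and by Proposition~\ref{prop:dominant-eigenvalue} its associated spectral projection $P_0$ has rank one, with $P_0 L^2(\mu) = \operatorname{span}\{\1\}$. Consequently $L^2(\mu) = P_0 L^2(\mu) \oplus L_0^2(\mu)$, and both summands are $L$-invariant (since $P_0$ commutes with $L$ on $\mathcal D(L)$).

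Next I would apply the standard spectral decomposition theorem for closed operators associated with an isolated spectral set (e.g.\ \cite[Theorem III.6.17]{Kato1995} or \cite[Proposition IV.1.16]{EngelNagel2000}): if $L_1$ denotes the restriction of $L$ to $P_0 L^2(\mu)$ and $L_0$ the restriction to $L_0^2(\mu)$, then $\sigma(L_1) = \{0\}$ and $\sigma(L_0) = \sigma(L)\setminus\{0\}$. Since $L_0^2(\mu)$ is a closed $L$-invariant subspace, the resolvent of $L_0$ is the restriction of $(\gamma - L)^{-1}$ to $L_0^2(\mu)$ for any $\gamma \in \rho(L) = \rho(L_0) \cup \rho(L_1)$, and hence inherits compactness. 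This forces $\sigma(L_0) = \sigma_p(L_0)$, the spectrum consisting entirely of eigenvalues.

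Finally, since $0$ was explicitly removed from $\sigma(L_0)$, we have $0 \in \rho(L_0)$, so $L_0^{-1}$ exists and, by the compactness of the resolvent, is in fact a compact bounded operator on $L_0^2(\mu)$. There is no real obstacle here; the only point requiring care is to justify that the restriction of the resolvent of $L$ to the invariant subspace $L_0^2(\mu)$ actually yields the resolvent of $L_0$, which follows directly from $P_0 L \subset L P_0$ and the definition $\mathcal D(L_0) = \mathcal D(L) \cap L_0^2(\mu)$.
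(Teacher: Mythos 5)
Your proof follows essentially the same line of argument as the paper's (the paper's "proof" is the discussion preceding the proposition, defining $P_0$, noting that spectral projections commute with the semigroup, and restricting $L$ to $L^2_0(\mu)$). You have, if anything, spelled out the references to Kato/Engel--Nagel and the inheritance of compactness more explicitly than the paper does. One small slip: you wrote $\rho(L) = \rho(L_0) \cup \rho(L_1)$, but since $\sigma(L) = \sigma(L_0) \cup \sigma(L_1)$ this should be an intersection, $\rho(L) = \rho(L_0) \cap \rho(L_1)$; fortunately the argument only uses $\rho(L) \subset \rho(L_0)$, which is the correct direction, so nothing breaks.
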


\subsection{Spectral gap}

The following theorem establishes that the non-trivial spectrum of $L$ is strictly separated from the imaginary axis. The proof depends upon Lemmas~\ref{lem:no-finite-accumulation-point} and~\ref{lem:no-infinite-accumulation-point}, which may be found in the appendix.

\begin{theorem}[Spectral gap]
\label{thm:spectralgap}
Suppose the assumptions of Theorem~\ref{thm:spectrum} are satisfied. Define 
\[ \kappa := \inf \{-\Re \gamma : \gamma \in \sigma(L), \gamma \neq 0\}.\]
Then $\kappa > 0$.
\end{theorem}

\begin{proof}
Suppose on the contrary there exists a sequence $(\gamma_n) \in \sigma(L) \setminus \{0\}$ such that  $\Re \gamma_n \rightarrow 0$. By Lemma~\ref{lem:no-finite-accumulation-point}, it follows that $\Im \gamma_n$ is unbounded (for otherwise there would be an accumulation point $\i \beta$ for $\beta \in \R$). By extracting a subsequence, we may assume that $|\Im \gamma_n | \rightarrow \infty$. However, by Lemma~\ref{lem:no-infinite-accumulation-point}, applied to a small interval $\alpha \in [-\varepsilon, 0]$ this is impossible.
\end{proof}

\subsection{A spectral mapping theorem}

In this section we will establish a spectral mapping theorem, which maps the spectrum of the infinitesemal generator to the spectrum of the semigroup. If the semigroup were eventually compact, this would be immediate (see, e.g., \cite[Corollary IV.3.12]{EngelNagel2000}); however this is not the case as we have established in Proposition~\ref{prop:not-eventually-compact}. 

By the Gearhart-Herbst theorem \cite[Theorem 4.5]{Gearhart1978}  spectrum of the semigroup may also originate from vertical lines in the complex plane (i.e. $\Re \gamma = \text{constant}$) along which the resolvent of the generator is unbounded. We verify in Proposition~\ref{prop:resolvent-bound} that this does not happen if we include Assumption~\ref{ass:polynomial-tail}, which details a sufficiently fast decay of the stationary distribution in its tails. Using Proposition~\ref{prop:resolvent-bound} we can then prove the spectral mapping theorem, Theorem~\ref{thm:spectralmapping}. 
%
%\begin{proposition}
%	\label{prop:resolvent-expression}
%	Suppose Assumptions~\ref{ass:sobolev}, \ref{ass:compactness}, \ref{ass:strong-integrability}, \ref{ass:unimodal}, \ref{ass:norefresh} and~\ref{ass:polynomial-tail} are satisfied. There exist holomorphic families of bounded linear operators $E : \C \rightarrow L(\C^2 \times L^2(\mu)), L^2(\mu)$ and $F: \C \rightarrow L(L^2(\mu), \C^2 \times L^2(\mu))$ such that 
%	\[ (\gamma - L)^{-1} = E(\gamma) \begin{pmatrix} \frac 1 {Z(\gamma)} \mathrm{Id}_{\C^2} & 0 \\ 0 & \mathrm{Id}_{L^2(\mu)} \end{pmatrix} F(\gamma), \quad \gamma \in \rho(L). \]
%	There exist constants $C(\alpha)$ depending only on $\alpha \in \R$ such that the operator norms of $E(\cdot)$ and $F(\cdot)$ may be bounded as:
%	\[ \sup_{\beta \in \R} \|E (\alpha + \i \beta)\| + \|F (\alpha + \i \beta)\| \leq C(\alpha)\quad \text{for all} \quad \alpha \in \R.\]
%	Furthermore $Z : \C \rightarrow \C$ is holomorphic and for any $\alpha \in \R$ we have $\lim_{|\beta| \rightarrow \infty} Z(\alpha + \i \beta) = 1$.
%%	Then for any $\alpha \in \R$ we have that $\gamma = \alpha + \i \beta \in \rho(L)$ for $|\beta|$ sufficiently large, and $\limsup_{|\beta| \rightarrow \infty} \| (\alpha + \i \beta -L)^{-1}\| < \infty$. \todo{Collect other results that we might need to obtain the spectral mapping theorem.} 
%	\todo{adapt to more standard notation if appropriate.}
%	
%	\todo{Can we write it as $1/(Z(\gamma))\mathrm{Id}_{\C}$ instead of $1/(Z(\gamma))\mathrm{Id}_{\C^2}$? Does it matter?}
%\end{proposition}

\begin{proposition}
	\label{prop:resolvent-bound}
	Suppose Assumptions~\ref{ass:sobolev}, \ref{ass:compactness}, \ref{ass:unimodal} and~\ref{ass:polynomial-tail} are satisfied. Furthermore assume $\lambda_{\refr}(x) = 0$ for all $x$.  There is a family of constants $C(\alpha)$ such that for all $\alpha \in \R$ we have
	\[ \limsup_{|\beta| \rightarrow \infty} \|(\alpha + \i \beta - L)^{-1}\| \leq C(\alpha).\]
\end{proposition}

% \todo{Point to Lemma~\ref{lem:convolut}}
\begin{proof}
	Suppose $\gamma \in \rho(L)$ and write $\gamma = \alpha + \i \beta$, where $\alpha, \beta \in \R$. In this proof we will often write $C(\alpha)$ for a positive constant which depends only on $\alpha$, whose value may be different at different locations in the proof.
	Let $h \in L^2(\mu)$ and let $f = (\gamma - L)^{-1} h$.
	The proof of Theorem~\ref{thm:spectrum} gives an expression for $f$ in terms of $h$. It only remains to provide adequate bounds for the expressions.
	
	We have by the proof of Theorem~\ref{thm:spectrum}, for $x \leq 0$, that 
	\[ f^+(x) = k^+ e^{\gamma x}  + e^{\gamma x} \int_x^0 e^{-\gamma \xi} h^+(\xi) d \xi.\]
	We will first the following claim.
	
	\emph{Claim 1:} $\|x \mapsto f^+(x) \1_{\{x \leq 0\}}\|_{L^2(\nu)} \leq C(\alpha) \left(|k^+| + \|h^+\|_{L^2(\nu)}\right)$.
	
	\emph{Proof of Claim 1.}
	For the first term of $f^+$, $\int_{-\infty}^0 |e^{2 \gamma x}| e^{-U(x)} \, d x = \int_{-\infty}^0 e^{2 \alpha x} e^{-U(x)} \, d x  \leq C(\alpha)$ by Assumption~\ref{ass:compactness}.
	For the second term of $f^+$, by Lemma~\ref{lem:convolution}, taking $\psi(y) = h^+(-y)\1_{\{y \geq 0\}}$ and $\phi(y) = e^{-\gamma y}\1_{\{y \geq 0\}}$, we find that (substituting $x = - y$ and $\xi = -\eta$, and noting that Assumption~\ref{ass:polynomial-tail} is invariant under this reparametrization)
	\begin{align*} \left\| x \mapsto  e^{\gamma x}\int_x^0 e^{-\gamma \xi} h^+(\xi) d \xi  \, \1_{\{x \leq 0\}} \right\|_{L^2(\nu)}  & = \left\| y \mapsto \int_0^y e^{-\gamma (y - \eta)} h^+(-\eta) \, d \eta\, \1_{\{y \geq 0\}} \right\|_{L^2(\tilde \nu)}\\
	& = \| \phi \star \psi\|_{L^2(\tilde \nu)} \\
	& \leq \left( \int_0^{\infty} e^{-\alpha x - C/2 - (m/2)|x|^p} \, d x \right) \|h^+\|_{L^2(\nu)},
	\end{align*} 
	where $\tilde \nu(dy) = e^{-U(-y)} \, d y$.
	This establishes Claim 1.
	
% 	\todo{Alternative proof (?):
% 	Let $g \in L^2(\nu)$. We have 
% 	\begin{align*}  \int_{-\infty}^0 e^{\gamma x} \int_x^0 h^+(\xi) \, d \xi \,  g(x) e^{-U(x)} \, d x & = \int_{-\infty}^0 \int_{-\infty}^{\xi} e^{\gamma x} g(x) e^{-U(x)} \, d x\,  e^{-\gamma \xi} h^+(\xi) \, d \xi  \\
% 	& \leq \int_{-\infty}^0 \left(\int_{-\infty}^{\xi} e^{2 \alpha x - U(x)} \, d x \right)^{1/2}\,  e^{-\alpha \xi} |h^+(\xi)| \, d \xi \, \| g\|_{L^2(\nu)}.
% 	\end{align*}
% 	Thus, it suffices to show that
% 	\[ \xi \mapsto \left(\int_{-\infty}^{\xi} e^{2 \alpha x - U(x)} \, d x \right)^{1/2}\,  e^{-\alpha \xi} e^{U(\xi)}  \in L^2(\nu),\]
% 	or equivalently,
% 	\[ \xi \mapsto \int_{-\infty}^{\xi} e^{2 \alpha x - U(x)} \, d x \,  e^{-2\alpha \xi} e^{U(\xi)} \in L^1(\R).\]}
	
	By an analogous argument, we have that $\|x \mapsto f^-(x) \1_{\{x \geq 0\}}\|_{L^2(\nu)} \leq C(\alpha) \left(|k^-| + \|h^-\|_{L^2(\nu)}\right)$.

	Next we will obtain a similar estimate for
	\[ f^+(x) = e^{\gamma x + U(x)} \left(k^+ - \int_0^x e^{-\gamma \xi - U(\xi)} \left[ h^+(\xi) + U'(\xi) e^{-\gamma \xi} \left(k^- + \int_0^{\xi} e^{\gamma \eta} h^-(\eta) \, d \eta \right) \right]  \, d \xi \right), \quad x \geq 0.\]

	\emph{Claim 2:} $\|x \mapsto f^+(x) \1_{\{x \geq 0\}}\|_{L^2(\nu)} \leq C(\alpha) \left(|k^-| + \| h^+\|_{L^2(\nu)} + \| h^-\|_{L^2(\nu)} \right)$. 
	
	\emph{Proof of Claim 2.}
	Recall from the proof of Theorem~\ref{thm:spectrum} that $k^+$ and $k^-$ satisfy the relations~\eqref{eq:relations-k+-}. By substituting the expression for $k^+$ in the above expression for $f^+$ we obtain
	\[  f^+(x) = e^{\gamma x + U(x)} \int_x^{\infty} e^{-\gamma \xi - U(\xi)} \left[ \widetilde h^+(\xi) + k^- U'(\xi) e^{-\gamma \xi} \right]\, d \xi, \quad x \geq 0.\]
	Here $\widetilde h^+(\xi) = h^+(\xi) + U'(\xi) e^{-\gamma \xi} \int_0^{\xi} e^{\gamma \eta} h^-(\eta) \, d \eta $, $\xi \geq 0$. First we will establish \emph{Claim 2a:}  $x \mapsto \widetilde h^+ (x)\1_{\{x \geq 0\}} \in L^2(\nu)$, with norm depending only on $\|h^+\|_{L^2(\nu)}$, $\|h^-\|_{L^2(\nu)}$ and $\Re \gamma$. We only need  to establish this for the second term of $\widetilde h^+$. 
	We may estimate
	\begin{align*}
	\left|e^{-\gamma \xi} \int_0^{\xi} e^{\gamma \eta} h^-(\eta) \, d \eta \right|\leq e^{-\alpha \xi} \int_0^{\xi} e^{\alpha \eta} |h^-(\eta)| \, d \eta.
	\end{align*}
	An analogous argument as given in the proof of Claim 1 establishes that 
	\[ \phi : x \mapsto e^{-\alpha x} \int_0^x e^{\alpha \eta} |h^-(\eta)| \, d \eta \1_{\{x \geq 0\}} \in L^2(\nu),\] with $\|\phi\|_{L^2(\nu)}$ depending only on $\alpha$ and $\|h^-\|_{L^2(\nu)}$.
	Then $\phi$ has weak derivative $\partial_x \phi(x) = \left( |h^-(x)| - \alpha \phi(x) \right) \1_{\{x \geq 0\}}$ in $L^2(\nu)$ which may be bounded by a constant depending only on $\|h^-\|_{L^2(\nu)}$ and $\alpha$. Thus $\| \phi \|_{W^{1,2}(\nu)} \leq C(\alpha) \|h^-\|_{L^2(\nu)}$. By Lemma~\ref{lem:sobolev-classical}, $U'(x) \phi(x) \in L^2(\nu)$ with norm depending only on $\alpha$ and $h$. This establishes Claim 2a.
	Next we establish \emph{Claim 2b:} $\xi \mapsto U'(\xi) e^{-\gamma \xi} \in L^2(\nu)$ with norm depending only on $\alpha$. Indeed
	\[ \int_0^{\infty} (U'(\xi))^2 e^{-2 \alpha \xi - U(\xi)} \, d \xi < \infty \] by Lemma~\ref{lem:sobolev-classical}. Combining Claims 2a and 2b yields that for $\psi(\xi) = \widetilde h^+(\xi) + k^- U'(\xi)e^{-\gamma \xi}$, 
	\[ \|\psi \|_{L^2(\nu)} \leq C(\alpha) ( \|h^+\|_{L^2(\nu)} + \|h^-\|_{L^2(\nu)} + |k^-| ).\]
	Now Claim 2 is proven by applying the second statement of Lemma~\ref{lem:convolution}, applied to $\psi$ as defined above and $\phi(x) = e^{-\gamma x}$.
	
	We may repeat the proofs of Claims 1 and 2 to establish analogous results for $f^-$. Thus we obtain that
	\[ \|f \|_{L^2(\mu)} \leq C(\alpha) \left(|k^+| + |k^-| + \|h\|_{L^2(\mu)}\right).\]
	
	We will now express $k^{\pm}$ in terms of $h$ and $\gamma$. 
	From~\eqref{eq:system-k+-} and~\eqref{eq:integral-operator}, we obtain
	\[ \begin{pmatrix} k^+ \\ k^- \end{pmatrix} = \frac 1 {Z(\gamma)} A(\gamma)K(\gamma) h, \quad \text{with} \quad A(\gamma) :=  \begin{pmatrix} 1 &  \psi^+(\gamma) \\ \psi^-(\gamma) & 1 \end{pmatrix}.\]
	The first row of $K(\gamma)$ contains the expression
	\[ \int_0^{\infty} e^{-\gamma \xi - U(\xi)} \widetilde h^+(\xi) \, d \xi,\]
	where $\widetilde h^+ \in L^2(\nu)$ is as in the proof of Claim 2 above.
	In the proof of Claim 2 it is established that $\widetilde h^+ \in L^2(\nu)$, with norm depending only on $\|h\|_{L^2(\mu)}$ and $\alpha = \Re \gamma$. The  same holds for the second row and we conclude that $\| K(\gamma) h \| \leq C(\alpha) \|h\|_{L^2(\mu)}$ for some constant $C(\alpha)$ depending only on $\alpha$.
	
	Recall $\psi^+(\gamma) = \int_0^{\infty} U'(\xi) e^{-2 \gamma \xi - U(\xi)} \, d \xi$. Since under our assumptions $x \mapsto U'(x) e^{-U(x)-2 \alpha x}\1_{\{x \geq 0\}} \in L^1(\R)$ for any $\alpha \in \R$, by Riemann-Lebesgue \cite[Theorem C.8]{EngelNagel2000}, we find for any $\alpha \in \R$ that $\lim_{|\beta| \rightarrow \infty} \psi^+(\alpha + \i \beta) = 0$. By an analogous argument, 
	$\lim_{|\beta| \rightarrow \infty} \psi^-(\alpha + \i \beta) = 0$.
	Thus for any $\alpha$ the matrix $\beta \mapsto A(\alpha + \i \beta)$ is bounded. Furthermore \[ \lim_{|\beta| \rightarrow \infty} Z(\alpha + \i \beta) =  \lim_{|\beta| \rightarrow \infty} \left(1 - \psi^+(\alpha+ \i \beta) \psi^-(\alpha + \i \beta) \right) = 1,\]
	so that the roots of $Z$ are contained in a bounded interval along the line $\Re \gamma = \alpha$.

	Combining all estimates yields the stated result.
\end{proof}

\begin{theorem}[Spectrum of the zigzag semigroup]
	\label{thm:spectralmapping}
	Suppose Assumptions~\ref{ass:sobolev}, \ref{ass:compactness}, \ref{ass:unimodal} and~\ref{ass:polynomial-tail} are satisfied. Furthermore assume $\lambda_{\refr}(x) = 0$ for all $x$. 
	Then for all $t > 0$, 
	\begin{equation} \sigma(P(t)) \setminus \{0\} = \sigma_p(P(t)) \setminus \{0\} = \{ e^{\gamma t} : \gamma \in \sigma(L)\}.\end{equation}
\end{theorem}

\begin{proof}
	Fix $t > 0$.
	Spectral mapping of the point spectrum is well known \cite[Theorem IV.3.7]{EngelNagel2000}: $\{ e^{\gamma t} : \gamma \in \sigma_p(L)\} = \sigma_p(P(t)) \setminus \{0\}$. By Theorem~\ref{thm:spectrum} we have $\sigma_p(L) = \sigma(L)$. Since $\sigma_p(P(t)) \setminus \{0\} \subset \sigma(P(t)) \setminus \{0\}$ it only remains to establish that $\sigma(P(t))  \setminus\{0\} \subset \{ e^{\gamma t} : \gamma \in \sigma(L)\}$.
	Suppose $\eta \in \C$, $\eta \neq 0$, and consider the set 
	\[ \Gamma_{\eta} := \{ \gamma \in \C: e^{\gamma t} = \eta\} = \{ \alpha + \beta i : \alpha = (\ln |\eta|) /t, \beta = (\arg \eta) /t + 2 k \pi/t, k \in \Z\}.\]
	Suppose $\Gamma_{\eta} \cap \sigma(L) = \emptyset$. By Proposition~\ref{prop:resolvent-bound} and the above characterization of $\Gamma_{\eta}$, it follows that $\gamma \mapsto \|(\gamma - L)^{-1}\|$ is bounded on $\Gamma_{\eta}$. Therefore, by \cite[Theorem 4.5]{Gearhart1978}, $\eta \notin \sigma(P(t))$. In other words, if $\eta \in \sigma(P(t)) \setminus \{0\}$, then there is a $\gamma \in \Gamma_{\eta} \cap \sigma(L)$. This establishes that $\sigma(P(t)) \setminus \{0\} \subset \{ e^{\gamma t} : \gamma \in \sigma(L)\}$.
\end{proof}

Recall the spectral gap $\kappa$ of $L$,
\[ \kappa := \inf \{ -\Re \gamma: \gamma  \in \sigma_p(L), \gamma \neq 0\},\]
which by Theorem~\ref{thm:spectralgap} satisfies $\kappa > 0$.
By Assumption~\ref{ass:sobolev}, we have that $\mu(E) < \infty$. Define a Borel probability distribution $\pi$ on $E$ by 
\begin{equation} \label{eq:def-pi} \pi(A) = \mu(A)/\mu(E), \quad \text{for all Borel measurable $A \subset E$.}\end{equation}

\begin{theorem}
	\label{thm:decay}
	Suppose the assumptions of Theorem~\ref{thm:spectralmapping} are satisfied. There is a constant $M > 0$ such that for any $f \in L^2(\mu)$, 
	\[ \|P(t) f - \pi(f)\|_{L^2(\mu)} \leq M e^{-\kappa t}\|f - \pi(f)\|_{L^2(\mu)}.\]
\end{theorem}

For the proof we will consider the spectrum of the semigroup $(P(t))_{t \geq 0}$ restricted to $L_0^2(\mu)$ as defined just before Proposition~\ref{prop:poisson-equation}, with infinitesemal generator $\widetilde L$.

\begin{proof}
	Write $\widetilde P(t)$ for $\left. P(t) \right|_{L^2_0(\mu)}$. The argument of Theorem~\ref{thm:spectralmapping} may be repeated to establish that, for $t > 0$,
	\[ \sigma(\widetilde P(t)) \setminus \{0\} = \sigma_p(\widetilde P(t)) \setminus \{0\} = \{ e^{\gamma t} : \gamma \in \sigma(\widetilde L)\} =  \{ e^{\gamma t} : \gamma \in \sigma(L), \gamma \neq 0\}.\]
	Here we used that $0$ is a simple eigenvalue (Proposition~\ref{prop:dominant-eigenvalue}), which is removed from $\sigma(\widetilde L)$, since $L^2_0(\mu)$ is by definition orthogonal to the associated eigenspace of constant functions.
	It then follows from \cite[Proposition IV.2.2]{EngelNagel2000} that there exists an $M > 0$ such that for the operator norm we have $\|P_0(t)\|_{L^2_0(\mu)} \leq M e^{-\kappa t}$. Applying $\widetilde P(t)$ to the function $\overline f = f - \pi(f)$ yields the stated result.
\end{proof}

\section{The symmetric case}
\label{sec:symmetric}

Let us now consider Assumption~\ref{ass:symmetric} (Symmetry): $U(x) = U(-x)$ for all $x$. In this case the zigzag semigroup decouples into two separate semigroups:
\begin{itemize}
	\item[(i)] a `climb-fall' semigroup acting on functions $f(x,\theta)$ which satisfy $f(x,\theta) = f(-x,-\theta)$, and 
	\item[(ii)] a second semigroup acting on functions $f(x,\theta)$ which satisfy $f(x,\theta) = -f(-x,-\theta)$.
\end{itemize}
The climb-fall semigroup (i) can be interpreted as the semigroup corresponding to the process $Y(t)$, where $Y(t) = \Theta(t) X(t)$ with $(X(t),\Theta(t))$ denoting the Zig-Zag process, as described in the introduction. This process $Y(t)$ can (under Assumption~\ref{ass:symmetric}) be seen to be a (piecewise deterministic) Markov process itself, with generator~\eqref{eq:generator-climbfall} below. The other semigroup (ii) is not a Markov semigroup.

We will now discuss this decomposition in more detail.
Define an operator $T \in L(L^2(\mu))$ by 
\[ T f(x,\theta) = \frac 1 {\sqrt{2}}( f^+(x)  + \theta f^{-}(-x)).\]
Then $T$ is unitary: Indeed, if $f, g \in L^2(\mu)$, then
\begin{align*}
\langle T f, T g \rangle & = \int_{\R} (T f)^+(x) (T g)^+(x) \, e^{-U(x)} \, d x + \int_{\R} (T f)^-(x) (T g)^-(x) \, e^{-U(x)} \, d x \\
& = \frac 1 2 \int_{\R} (f^+(x) + f^-(-x))(g^+(x) + g^-(-x)) \, e^{-U(x)} \, d x  \\
& \quad \quad + \frac 1 2 \int_{\R} (f^+(x) - f^-(-x))(g^+(x) - g^-(-x)) \, e^{-U(x)} \, d x \\
& = \int_{\R} \left[ f^+(x) g^+(x) + f^-(-x) g^-(-x) \right] e^{-U(x)} \, d x = \langle f, g\rangle,
\end{align*}
where the last equality holds due to Assumption~\ref{ass:symmetric}.
It may be checked that
\[T^{\star} f(x,\theta)= T^{-1} f(x,\theta) = \frac 1 {\sqrt{2}} (f(\theta x,+1) + \theta f(\theta x,-1)).\]

\begin{remark}
	Define subspaces of $L^2(\mu)$ as follows:
	\begin{align*} \mathcal H_S & := \{ f \in L^2(\mu) : f^+(x) = f^-(-x) \ \text{for} \ x \in \R\}, \quad & \mathcal H^+ & := \{ f \in L^2(\mu) : f^-(x)= 0 \ \text{for} \ x \in \R\} \\
	\mathcal H_A & := \{ f \in L^2(\mu) : f^+(x) = - f^-(-x) \ \text{for} \ x \in \R\}, \quad & \mathcal H^- & := \{ f \in L^2(\mu) : f^+(x)= 0 \ \text{for} \ x \in \R\}.
	\end{align*}
	We can write $f \in L^2(\mu)$ as
	\[ \begin{pmatrix} f^+(x) \\ f^-(x) \end{pmatrix} = \frac 1 2 \begin{pmatrix} f^+(x) + f^-(-x) \\ f^-(x) + f^+(-x) \end{pmatrix} + \frac 1 2 \begin{pmatrix} f^+(x) - f^-(-x) \\ f^-(x) - f^+(-x) \end{pmatrix},\]
	coressponding with the decomposition $L^2(\mu) \cong \mathcal H_S \oplus \mathcal H_A$.
	We thus have that $L^2(\mu) \cong \mathcal H^+ \oplus \mathcal H^-$, $L^2(\mu) \cong \mathcal H_S \oplus \mathcal H_A$, $\mathcal H^+ \perp \mathcal H^-$ and moreover, using Assumption~\ref{ass:symmetric}, $\mathcal H_S \perp \mathcal H_A$.
	Furthermore 
	\[ T \mathcal H_S = \mathcal H^+, \quad T \mathcal H_A = \mathcal H^-, \quad T^{\star}\mathcal H^+ = \mathcal H_S, \quad T^{\star} \mathcal H^- = \mathcal H_A.\]
	In other words $T : \mathcal H_S \oplus \mathcal H_A \rightarrow \mathcal H^+ \oplus \mathcal H^-$ with the mapping $T$ respecting the direct sum.
	
	The motivation for this transformation and these subspaces stems from the following proposition.
\end{remark}

\begin{proposition}
	Suppose Assumptions~\ref{ass:sobolev} and~\ref{ass:symmetric} are satisfied, and $\lambda_{\refr}(x) = \lambda_{\refr}(-x)$ for all $x$.
	The transformation of $(L, \mathcal D(L))$ by $T$, i.e., $(\tilde L, \mathcal D(L))$ with $\widetilde L = T L T^{\star}$, is given by $\mathcal D(\widetilde L) = W^{1,2}(\mu)$ and 
	\[ (\widetilde L f)^{\pm}(x) = \partial_x f^{\pm}(x) + \lambda^+(x) (\pm f^{\pm}(-x) - f^{\pm}(x)), \quad f \in \mathcal D(\widetilde L).\]
\end{proposition}

This proposition states that the subspaces $\mathcal H^{\pm}$ are invariant for $\widetilde L$.
In terms of the original generator $L$, this implies that the decomposition $L^2(\mu) = \mathcal H_S \oplus \mathcal H_A$ is invariant under $L$. That is, $L$ maps $f \in \mathcal H_S \cap \mathcal D(L)$ into $\mathcal H_S$ and $f \in \mathcal H_A \cap \mathcal D(L)$ into $\mathcal H_A$ respectively.

\begin{proof}
	By Theorem~\ref{thm:compact-resolvent} we have $\mathcal D(L) = W^{1,2}(\mu)$  which is invariant under $T$ and its inverse.
	Write $g(\cdot,\theta) = T^{\star} f(\cdot,\theta) = \frac 1 {\sqrt 2} (f^+(\theta \cdot) + \theta f^-(\theta \cdot))$. Then 
	\[ \theta \partial_x g(\cdot,\theta) = \frac 1 {\sqrt 2} \left[(\partial_x f^+)(\theta \cdot) + \theta (\partial_x f^-)(\theta \cdot) \right].\]
	Using the above expression, and $\lambda^+(x) = \lambda^-(-x)$,
	\begin{align*}
	& (T L T^{\star} f)^+(x) \\
	& = \frac 1 {\sqrt 2} \left(L T^{\star} f^+(x) + L T^{\star} f^-(-x)\right) \\
	& = \frac 1 {\sqrt 2} \left(\partial_x g^+(x) - \partial_x g^-(-x) \right) \\
	& \quad \quad + \frac 1 {\sqrt 2} \left( \lambda^+(x) (g^-(x) - g^+(x)) + \lambda^-(-x) (g^+(-x) - g^-(-x)) \right) \\
	& = \frac 1 2 \left( \partial_x f^+(x) + \partial_x f^-(x) + \partial_x f^+(x) - \partial_x f^-(x) \right) \\
	& \quad \quad + \frac 1 2 \lambda^+(x) \left( f^+(-x) - f^-(-x) - f^+(x) - f^-(x) + f^+(-x) + f^-(-x) - f^+(x) + f^-(x) \right) \\
	& = \partial_x f^+(x) + \lambda^+(x) \left( f^+(-x) - f^+(x) \right).
	\end{align*}
	An analogous computation yields 
	\[(TLT^{\star} f)^-(x) = \partial_x f^-(x) - \lambda^+(x) \left(f^-(-x) + f^-(x) \right).\]
\end{proof}

Recall that $\nu$ denotes the measure on $\R$ with density $e^{-U}$ with respect to Lebesgue measure. We see that under the transformation $T$, the generator decouples, and we may thus consider the two generators $L^{\pm}$ on the decoupled spaces given by
\begin{equation} \label{eq:generator-climbfall} L^+ f(x) = f'(x) + \lambda^+(x) (f(-x) - f(x)), \quad f \in W^{1,2}(\nu), \end{equation}
and
\[ L^- f(x) = f'(x) + \lambda^+(x) (-f(-x) - f(x)), \quad f \in W^{1,2}(\nu)\]
independently.

\subsection{Spectral theory of the reduced semigroups}

Here we consider the two semigroups with generator
\[ L^{\pm} f(x) = f'(x) + \lambda^+(x) (\pm f(-x) - f(x)), \quad f \in W^{1,2}(\nu),\]
in $L^2(\nu)$. 
Because from an analytic point of view the two semigroups $L^+$ and $L^-$ are very similar, we carry out the analysis for the semigroups at once, at least as much as possible. We carefully keep track of the effect of the sign $\pm$ in the expressions that follow.
We assume throughout Assumption~\ref{ass:symmetric}, i.e., $U(x) = U(-x)$, $x \in \R$.

Define functions $Z^{\pm} : \C \rightarrow \C$ by 
\begin{equation}
\label{eq:Z-pm} Z^{\pm}(\gamma) =  1 \mp \int_0^{\infty} U'(\eta) e^{-2 \gamma \eta - U(\eta)} \, d \eta = 1 \mp \psi^+(\gamma)
\end{equation}
and sets
$\Sigma^+ \subset \C$ and $\Sigma^- \subset \C$ by
\begin{equation} \label{eq:Sigma} \Sigma^{\pm} = \left \{ \gamma \in \C :Z^{\pm}(\gamma) = 0 \right\}.\end{equation}

\begin{theorem} \label{thm:spectrum-symmetric} Suppose Assumptions~\ref{ass:sobolev}, \ref{ass:compactness}, \ref{ass:unimodal} and~\ref{ass:symmetric} are satisfied. Furthermore assume $\lambda_{\refr}(x) = 0$ for all $x$. Then $\sigma_p(L^{\pm}) = \sigma(L^{\pm}) = \Sigma^{\pm}$.	
	If $\gamma \in \rho(L^{\pm})$ then, for $h \in L^2(\nu)$, $f^{\pm} = (\gamma - L^{\pm})^{-1} h \in W^{1,2}(\nu)$ admits the expression
	\begin{equation} \label{eq:resolvent-symmetric}
	f^{\pm}(x) = \begin{cases}
	e^{\gamma x} \left( \frac{m^{\pm}_{\gamma}(h)}{Z^{\pm}(\gamma)} + \int_x^0 e^{-\gamma \xi} h(\xi) \, d \xi \right), \quad & x \leq 0, \\
	e^{\gamma x + U(x)} \int_x^{\infty} e^{-\gamma \xi - U(\xi)} \left[ h(\xi) \pm U'(\xi) e^{-\gamma \xi} \left(\frac{m^{\pm}_{\gamma}(h)}{Z^{\pm}(\gamma)} + \int_{-\xi}^0 e^{-\gamma \eta} h(\eta) \, d \eta \right)\right] \, d \xi, \quad & x > 0,
	\end{cases}
	\end{equation}
	where $m^{\pm}_{\gamma}(h)$ is given by
	\begin{equation} \label{eq:m-plusmin} m^{\pm}_{\gamma}(h) = \int_0^{\infty} e^{-\gamma \xi - U(\xi)} \left[ h(\xi) \pm U'(\xi) e^{-\gamma \xi}\int_{0}^{\xi} e^{\gamma \eta} h(-\eta) \, d \eta \right] \, d \xi.
	\end{equation}
	For every $\gamma \in \sigma(L^{\pm})$ the associated space of eigenfunctions is one-dimensional and spanned by $f_{\gamma}^{\pm} \in W^{1,2}(\nu)$ given by 
	\begin{equation}
	\label{eq:eigenfunctions-symmetric}
	f^{\pm}_{\gamma}(x) = \begin{cases} e^{\gamma x}, \quad & x \leq 0, \\
	\pm e^{\gamma x + U(x)} \int_x^{\infty} U'(\xi) e^{-2 \gamma \xi - U(\xi)} \, d \xi, \quad & x \geq 0. \end{cases}
	\end{equation}
\end{theorem}

In this proposition, and in the remainder of this manuscript, statements involving $\pm$ hold for the $+$ and $-$ cases, respectively. For example, $\sigma_p(L^+) = \sigma(L^+) = \Sigma^+$, and  $f^+_{\gamma}$ is an eigenfunction corresponding to the eigenvalue $\gamma \in \Sigma^+$ for the operator $L^+$.

\begin{proof}
	The details of this proof (e.g., bounds on integrals) are analogous to those given in the proof of Theorem~\ref{thm:spectrum} and will be omitted. We only carry out the computations yielding the stated expressions.
	First suppose $\gamma \in \rho(L^{\pm})$.
	The resolvent equation is $\gamma f^{\pm} - L^{\pm} f^{\pm} = h$ where $h \in L^2(\nu)$. For $x \leq 0$, since $\lambda^+(x) = 0$, this becomes $\gamma f^{\pm}(x) - (f^{\pm})'(x) = h(x)$, leading to the expression
	\begin{equation} \label{eq:f-negative-x} f^{\pm}(x) = e^{\gamma x} \left( k^{\pm}(\gamma;h) + \int_x^0 e^{-\gamma \xi} h(\xi) \, d \xi \right), \quad x \leq 0,\end{equation}
	where $k^{\pm}(\gamma;h)\in \C$ are integration constants depending on $\gamma$ and $h$, which will be specified below.
	For $x \geq 0$ the resolvent equation may be written as
	\begin{equation} \label{eq:resolvent-positive-x}\gamma f^{\pm}(x) - (f^{\pm})'(x) + U'(x) f^{\pm}(x) = \widetilde h^{\pm}(x), \quad x \geq 0,\end{equation}
	where, for $x \geq 0$,
	\begin{equation*} \label{eq:tilde-h} \widetilde h^{\pm}(x) := h(x) \pm U'(x) f^{\pm}(-x) = h(x) \pm U'(x) e^{-\gamma x} \left[  k^{\pm}(\gamma;h) + \int_{-x}^0 e^{-\gamma \xi} h(\xi)\, d \xi \right].\end{equation*}
	This first order ordinary differential equation admits the solution
	\begin{equation} \label{eq:f-positive-x-1} f^{\pm}(x) = e^{\gamma x +U(x)} \left( k^{\pm}(\gamma;h) - \int_0^x e^{-\gamma \xi - U(\xi)} \widetilde h^{\pm}(\xi) \, d \xi \right), \quad x \geq 0,\end{equation}
	where the integration constant is chosen so that $f^{\pm}$ is continuous in $x$.
	In case $\gamma \notin \Sigma^{\pm}$, let $k^{\pm}(\gamma;h) = m^{\pm}_{\gamma}(h)/Z^{\pm}(\gamma)$ with $m^{\pm}_{\gamma}(h)$ as in~\eqref{eq:m-plusmin}. Analogously to the proof of Theorem~\ref{thm:spectrum},
% 	we may choose $k^{\pm}(\gamma;h) = m^{\pm}_{\gamma}(h)/Z^{\pm}(\gamma)$ with $m^{\pm}_{\gamma}(h)$ as in~\eqref{eq:m-plusmin}.
	it may then be verified that $f^{\pm}(x)$ admits the expression
	\begin{equation} \label{eq:f-positive-x-2} f^{\pm}(x) = e^{\gamma x+ U(x)} \int_x^{\infty} e^{-\gamma \xi - U(\xi)} \widetilde h^{\pm}(\xi) \, d \xi, \quad x \geq 0.\end{equation}
	Indeed, expression~\eqref{eq:f-positive-x-2} is equal to~\eqref{eq:f-positive-x-1} if and only if
	\begin{equation}\label{eq:condition-k} k^{\pm}(\gamma;h) = \int_0^{\infty} e^{-\gamma \xi - U(\xi)} \widetilde h^{\pm}(\xi) \, d \xi = \pm k^{\pm}(\gamma;h) \int_0^{\infty} U'(\xi) e^{-2 \gamma \xi -U(\xi)} \, d \xi + m_{\gamma}^{\pm}(h).\end{equation}
	Solving for $k^{\pm}(\gamma;h)$ and using~\eqref{eq:psi+-} yields the stated expression for $k^{\pm}(\gamma;h)$.
% 	By the arguments used in Theorem~\ref{thm:spectrum} 
	We have, using~\eqref{eq:f-negative-x} and Lemma~\ref{lem:convolution} for $x \le 0$, and \eqref{eq:f-positive-x-2} and Lemma~\ref{lem:integrals} for $x \ge 0$, that $f^{\pm} \in L^2(\nu)$. This fully determines the solution $f^{\pm}$ of the resolvent equation.
    If $\gamma \in \Sigma^{\pm}$ then~\eqref{eq:condition-k} implies that $m_{\gamma}^{\pm}(h)$ which is defined by~\eqref{eq:m-plusmin} is identically equal to 0; a contradiction.
%     from~\eqref{eq:condition-k} it follows that $m_{\gamma}^{\pm}(h) = 0$ for all $h \in L^2(\nu)$.
%     
%     $k^{\pm}(\gamma;h)$ can not be chosen so that~\eqref{eq:f-positive-x-2} holds, contradicting the requirement that $f^{\pm} \in L^2(\nu)$.
	
	Now suppose $\gamma \in \sigma(L^{\pm})$. Since $\sigma(L^{\pm}) \subset \sigma(L)$, it follows by Corollary~\ref{cor:compact-resolvent} that $\gamma$ is in the point spectrum of $L^{\pm}$. A solution to the equation $(\gamma - L^{\pm}) f^{\pm}_\gamma$ is given by
	\[ f^{\pm}_{\gamma}(x) = \begin{cases} e^{\gamma x}, \quad & x \leq 0, \\
	e^{\gamma x + U(x)} \left( 1 \mp \int_0^x U'(\xi) e^{-2 \gamma \xi - U(\xi)} \, d \xi \right), \quad & x \geq 0, \end{cases}
	\]
	insisting upon continuity in $x = 0$. Since we require $f^{\pm}_{\gamma} \in L^2(\nu)$, its growth as $x \rightarrow \infty$ should be controlled, requiring that
	\[ 1 \mp \int_0^{\infty} U'(\xi) e^{-2 \gamma \xi - U(\xi)} \, d \xi = 0,\]
	with the respective limits being well defined by an application of Lemma~\ref{lem:integrals}.
	But this is equivalent to $\gamma \in \Sigma^{\pm}$, and yields the alternative expression~\eqref{eq:eigenfunctions-symmetric}.
\end{proof}

\begin{remark}[Adjoint generator]
Introduce the operator $J : L^2(\nu) \rightarrow L^2(\nu)$ by $J g(x) = g(-x)$. It may be verified that $(L^{\pm})^{\star} = J L^{\pm} J$, a very similar situation as in the previous case where $L^{\star} = F LF$. 
Indeed by partial integration (taking for simplicity $f,g \in C_c^{\infty}(E)$ to be real valued),
\begin{align*}
\int_{\R} f L^{\pm} g  \, d \nu & = \int_{\R} f(x) \left[ g'(x) + \lambda^+(x) \left( \pm g(-x) - g(x) \right) \right] e^{-U(x)} \, d x \\
& = \int_{\R} \left[ - f'(x)  g(x) + U'(x) f(x) g(x) \pm \lambda^+(x) f(x) g(-x) - \lambda^+(x) f(x) g(x) \right] e^{-U(x)} \, d x \\
& = \int_{\R} \left[ -f'(x) g(x) \pm \lambda^+(-x) f(-x) g(x) - \lambda^-(x) f(x) g(x)  \right] e^{-U(x)} \, d x \\
& = \int_{\R} \left[ J \partial_x J f(x) \pm \lambda^+(-x) f(-x) - \lambda^+(-x) f(x) \right] g(x) e^{-U(x)} \, d x \\
& = \int_{\R} (JL^{\pm} J f) g \, d \nu,
\end{align*}
where we used partial integration in the second inequality, performed a change of variables $x \rightarrow -x$ for the integral over $\lambda^+(x) f(x) g(-x)$, and used $U'(x) - \lambda^+(x) = -\lambda^-(x) = -\lambda^+(-x)$ in the third and fourth inequality.

The observation that $(L^{\pm})^{\star} = J L^{\pm} J$ implies that the results of Lemma~\ref{lem:J-selfadjoint} concerning the spectrum of $L^{\pm}$ apply.
\end{remark}

\begin{corollary}[Spectral projection]
	\label{cor:spectral-projection-symmetric}
	Under the assumptions of Theorem~\ref{thm:spectrum-symmetric} the spectral projection corresponding to $\gamma \in \sigma(L^{\pm})$ is given by
	\[ P_{\gamma}^{\pm} h  = \frac 1 {2 \pi \i} \int_{\Gamma} k^{\pm}(\zeta, h) f_{\zeta}^{\pm}\, d \zeta,\]
	where $\Gamma$ is a Jordan contour in $\C$ enclosing the eigenvalue $\gamma$ and no other eigenvalues, where $k^{\pm}$ and $f_{\zeta}^{\pm}$ are as defined in Theorem~\ref{thm:spectrum-symmetric}.
	
	In particular if $\gamma$ is a simple root of $Z^{\pm}$, then $P_{\gamma}^{\pm}$ has rank one and is given by
	\[ P_{\gamma}^{\pm} h = \frac{m^{\pm}_{\gamma}(h)}{\frac{d}{d \gamma} Z^{\pm}(\gamma)} f_{\gamma}^{\pm} = \frac{\langle h, J \overline f^{\pm}_{\gamma} \rangle_{L^2(\nu)}}{\langle f^{\pm}_{\gamma}, J \overline f^{\pm}_{\gamma} \rangle_{L^2(\nu)}} f_{\gamma}^{\pm},\]
	with $m^{\pm}_{\gamma}$ given in Theorem~\ref{thm:spectrum-symmetric}.
\end{corollary}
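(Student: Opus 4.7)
The plan is to mirror the approach taken for the full generator in Proposition~\ref{prop:spectral-projection} and Corollary~\ref{cor:simple-root}, now exploiting that $L^\pm$ is $J$-selfadjoint so that Lemma~\ref{lem:J-selfadjoint} applies with $A = L^\pm$. Start from the abstract formula
\[
P_\gamma^\pm = \frac{1}{2\pi\i}\int_\Gamma (\zeta - L^\pm)^{-1}\,d\zeta,
\]
and substitute the explicit expression~\eqref{eq:resolvent-symmetric}. Inspection shows that the resolvent decomposes as $(\zeta - L^\pm)^{-1}h = k^\pm(\zeta,h)\,f_\zeta^\pm + r(\zeta,\,\cdot\,;h)$, where $k^\pm(\zeta,h) = m^\pm_\zeta(h)/Z^\pm(\zeta)$, the function $f_\zeta^\pm$ is given by~\eqref{eq:eigenfunctions-symmetric} but now viewed as a formal definition valid for every $\zeta \in \C$ (not only for $\zeta \in \Sigma^\pm$), and the remainder $r$ is entire in $\zeta$. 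Indeed, on $\{x \le 0\}$ the coefficient of $k^\pm(\zeta,h)$ in~\eqref{eq:resolvent-symmetric} is $e^{\zeta x}$, and on $\{x > 0\}$ it is $\pm e^{\zeta x + U(x)}\int_x^\infty U'(\xi) e^{-2\zeta \xi - U(\xi)}\,d\xi$; these are precisely the two pieces of $f_\zeta^\pm$. Cauchy's theorem then kills the $r$-contribution and yields the first displayed identity.

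For the simple-root case, a dominated-convergence argument analogous to Lemma~\ref{lem:psi} shows that $\zeta \mapsto m_\zeta^\pm(h)$ and $\zeta \mapsto f_\zeta^\pm$ are holomorphic near $\gamma$, while $Z^\pm$ has a simple zero at $\gamma$ by hypothesis. The Cauchy residue theorem immediately produces $P_\gamma^\pm h = m_\gamma^\pm(h)\,f_\gamma^\pm / (\tfrac{d}{d\gamma}Z^\pm(\gamma))$, establishing rank one with range $\operatorname{span}\{f_\gamma^\pm\}$.

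To upgrade to the inner-product form, I would identify both $m_\gamma^\pm(h)$ and $\tfrac{d}{d\gamma}Z^\pm(\gamma)$ as pairings in $L^2(\nu)$. Swapping the order of integration in the double integral in~\eqref{eq:m-plusmin} and substituting $\eta \mapsto -x$, together with~\eqref{eq:eigenfunctions-symmetric} and the symmetry $U(-x) = U(x)$, yields
\[
m_\gamma^\pm(h) = \int_{-\infty}^\infty h(x)\, f_\gamma^\pm(-x)\, e^{-U(x)}\,dx = \langle h, J\overline{f_\gamma^\pm}\rangle_{L^2(\nu)}.
\]
Applying this with $h = f_\gamma^\pm$, and observing that $\int_0^\xi e^{\gamma\eta} f_\gamma^\pm(-\eta)\,d\eta = \xi$ since $f_\gamma^\pm(-\eta) = e^{-\gamma\eta}$ for $\eta \ge 0$, a further Fubini swap in the first integral of~\eqref{eq:m-plusmin} collapses it to $\pm\int_0^\infty \xi U'(\xi)\,e^{-2\gamma\xi - U(\xi)}\,d\xi$, so that
\[
\langle f_\gamma^\pm, J\overline{f_\gamma^\pm}\rangle_{L^2(\nu)} = \pm 2\int_0^\infty \xi\, U'(\xi)\,e^{-2\gamma\xi - U(\xi)}\,d\xi = \tfrac{d}{d\gamma}Z^\pm(\gamma),
\]
the last equality by differentiating~\eqref{eq:Z-pm} under the integral sign (cf.\ Lemma~\ref{lem:psi}). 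Substituting both identities into the residue formula delivers the second expression for $P_\gamma^\pm$.

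The principal obstacle is clerical rather than conceptual: the piecewise formula~\eqref{eq:resolvent-symmetric} forces the decomposition into $k^\pm(\zeta,h)\,f_\zeta^\pm$ plus a holomorphic remainder to be verified term-by-term on each half-line, and a handful of Fubini exchanges must be justified using the integrability estimates already gathered in Section~\ref{sec:generatorspectrum}. A more conceptual route to the bilinear identity, avoiding the direct computation of $\tfrac{d}{d\gamma}Z^\pm(\gamma)$, would follow Corollary~\ref{cor:simple-root}: $J$-selfadjointness and Lemma~\ref{lem:J-selfadjoint}\,(iv)--(v) force $P_\gamma^\pm h = \langle h, g\rangle f_\gamma^\pm$ with $g$ proportional to $J\overline{f_\gamma^\pm}$, and idempotence $P_\gamma^\pm f_\gamma^\pm = f_\gamma^\pm$ fixes the constant of proportionality.
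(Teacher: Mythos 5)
Your proposal is correct and follows the same overall scheme the paper uses (contour formula for the resolvent, holomorphic decomposition, residue theorem for simple roots). The one place where you diverge from the paper's proof is the passage to the bilinear form $P_\gamma^\pm h = \langle h, J\overline{f_\gamma^\pm}\rangle f_\gamma^\pm / \langle f_\gamma^\pm, J\overline{f_\gamma^\pm}\rangle$. The paper's proof of Corollary~\ref{cor:spectral-projection-symmetric} invokes the argument of Corollary~\ref{cor:simple-root}, which is abstract: rank-one projections are of the form $h\mapsto\langle h, g\rangle f_\gamma^\pm$, $J$-selfadjointness and Lemma~\ref{lem:J-selfadjoint}~(iv)--(v) force $g$ to be a multiple of $J\overline{f_\gamma^\pm}$, and idempotence $\langle f_\gamma^\pm, g\rangle = 1$ normalizes the constant. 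You instead \emph{compute} the identities $m_\gamma^\pm(h) = \langle h, J\overline{f_\gamma^\pm}\rangle_{L^2(\nu)}$ and $\tfrac{d}{d\gamma}Z^\pm(\gamma) = \langle f_\gamma^\pm, J\overline{f_\gamma^\pm}\rangle_{L^2(\nu)}$ directly by Fubini and the change of variables $x\mapsto -x$ (using $U(-x)=U(x)$), and I checked that both computations are correct. This is precisely the ``direct computation'' the authors defer to the remark following the corollary, and your route has the small advantage of establishing both identities unconditionally (as the paper also notes, they remain valid when $\tfrac{d}{d\gamma}Z^\pm(\gamma)=0$), at the cost of being a little more clerical. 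Since you also sketch the abstract route at the end, your proposal in fact covers both arguments; nothing essential is missing.
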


\begin{proof}
	As in the proof of Proposition~\ref{cor:spectral-projection} the non-vanishing terms after integrating the resolvent along a simple closed contour $\Gamma$ enclosing the eigenvalue $\gamma$ are only those depending on $k^{\pm}(\gamma;h)$ since these have $Z^{\pm}(\gamma)$ in the denominator and are thus non-holomorphic on the interior of $\Gamma$. This yields the stated expression. The expressions for the case of a simple root of $Z^{\pm}$ follow analogously to the proof of Corollary~\ref{cor:simple-root}
\end{proof}

\begin{remark}
	As was the case with Remark~\ref{rem:expression-Z-prime}, it may be verified by direct computation that, if $\gamma \in \sigma_p(L^{\pm})$, we have
	\[ m_{\gamma}^{\pm}(h) = \langle h, J \overline f_{\gamma}^{\pm} \rangle \quad \text{and} \quad \frac{d}{d\gamma} Z^{\pm}(\gamma) = \langle f^{\pm}_{\gamma}, J \overline f^{\pm}_{\gamma} \rangle, \]
	also when $\frac{d}{d\gamma} Z^{\pm}(\gamma) = 0$.
\end{remark}

\section{Bounded perturbations}
\label{sec:perturbation}

So far the characterization of the spectrum relies upon the assumption that $\lambda_{\refr}(x) = 0$ for all $x$, in which case relatively explicit solutions to resolvent and eigenvalue equations can be obtained. In order to extend our analysis to the case $\lambda_{\refr} \neq 0$, we investigate the effect of small perturbations of the zigzag generator on its spectrum.

\begin{proposition} \label{prop:perturbation}
	Let $(L,\mathcal D(L))$ denote the generator of the zigzag semigroup in $L^2(\mu)$ and let $B$ be a bounded operator on $L^2(\mu)$. Suppose the assumptions of Theorem~\ref{thm:spectrum} are satisfied. Suppose $\gamma$ is a root of $Z$, so that $\gamma \in \sigma_p(L)$.
	\begin{itemize}
		\item[(i)] For $\epsilon > 0$ sufficiently small 
		there is a set of (repeated) eigenvalues $\gamma_j(\epsilon) \in \sigma_p(L + \epsilon B) = \sigma (L + \epsilon B)$, $j = 1, \dots, m$, with total algebraic multiplicity equal to the algebraic multiplicity $m$ of $\gamma$, such that $\gamma_j(\epsilon) \rightarrow \gamma$ as $\epsilon \downarrow 0$. Furthermore the eigenvalues $\gamma_j(\epsilon)$ admit the asymptotic expansions
		\[ \gamma_j(\epsilon) = \gamma + \epsilon \mu_j^{(1)} + o(\epsilon), \quad j =1,\dots,m,\]
		where $(\mu_j^{(1)})$ are the repeated eigenvalues of the operator $P_{\gamma} B P_{\gamma}$ considered in the $m$-dimensional space $P_{\gamma} L^2(\mu)$, where $P_{\gamma}$ denotes the spectral projection corresponding to $\gamma$.
		\item[(ii)] If moreover $\gamma$ is a simple root of $Z$, so that it is a simple eigenvalue of $L$, then for $\epsilon> 0$ sufficiently small the operator $L + \epsilon B$ has a simple eigenvalue $\gamma(\epsilon)$ such that $\gamma(\epsilon) \rightarrow \gamma$ as $\epsilon \downarrow 0$, and $\gamma(\epsilon)$ admits the asymptotic expansion
		\begin{equation} \label{eq:asymptotic-expansion} \gamma(\epsilon) = \gamma + \epsilon \langle B f_{\gamma}, F \overline f_{\gamma} \rangle_{L^2(\mu)} / \langle f_{\gamma}, F \overline f_{\gamma} \rangle_{L^2(\mu)} + o(\epsilon),\end{equation}
		where $f_{\gamma}$ is the eigenfunction associated to $\gamma$ as given by Theorem~\ref{thm:spectrum}.
	\end{itemize}
\end{proposition}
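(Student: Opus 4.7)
The plan is to exhibit $\gamma$ as an isolated eigenvalue of finite algebraic multiplicity for $L$, check that this property survives the bounded perturbation $\epsilon B$, and then invoke the standard Kato perturbation machinery for finite eigenvalue groups of a holomorphic family of operators of type (A). The second part then reduces to computing the eigenvalue of $P_\gamma B P_\gamma$ on the one-dimensional range of $P_\gamma$ via the explicit formula from Corollary~\ref{cor:simple-root}.

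First I would verify the structural requirements. Under the hypotheses of Theorem~\ref{thm:spectrum} the resolvent of $L$ is compact (Corollary~\ref{cor:compact-resolvent}), so every element of $\sigma(L)=\sigma_p(L)=\Sigma$ is an isolated eigenvalue of finite algebraic multiplicity. For $\zeta_0 \in \rho(L)$ and $\epsilon$ small enough that $\epsilon\|B\|\,\|(\zeta_0-L)^{-1}\|<1$, the identity
\[ (\zeta_0 - L - \epsilon B)^{-1} = (\zeta_0-L)^{-1}\bigl(I - \epsilon B(\zeta_0-L)^{-1}\bigr)^{-1} \]
shows that $L+\epsilon B$ also has compact resolvent, so its spectrum likewise consists of isolated eigenvalues of finite algebraic multiplicity. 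Moreover, since $\mathcal D(L+\epsilon B)=\mathcal D(L)$ is independent of $\epsilon$ and $B$ is bounded, $\epsilon\mapsto L+\epsilon B$ is a holomorphic family of type (A) in the sense of \cite[Section IV.2]{Kato1995}. Choosing a Jordan curve $\Gamma$ separating $\gamma$ from the rest of $\sigma(L)$, the operator
\[ P_\gamma(\epsilon) := \frac{1}{2\pi\i}\int_\Gamma (\zeta - L - \epsilon B)^{-1}\,d\zeta \]
is, for $\epsilon$ small, a well-defined spectral projection depending holomorphically on $\epsilon$ with $P_\gamma(0)=P_\gamma$, and hence of constant finite rank $m$ by continuity. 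Applying the eigenvalue expansion of \cite[Theorem IV.3.18 and Section II.2.3]{Kato1995} to the finite-dimensional family $L+\epsilon B$ restricted to $P_\gamma(\epsilon)L^2(\mu)$ gives the expansions
\[ \gamma_j(\epsilon) = \gamma + \epsilon\mu_j^{(1)} + o(\epsilon), \qquad j=1,\dots,m, \]
with $\mu_j^{(1)}$ the repeated eigenvalues of $P_\gamma B P_\gamma$ on the $m$-dimensional invariant subspace $P_\gamma L^2(\mu)$. This proves (i).

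For (ii), if $\gamma$ is a simple root of $Z$ then $m=1$ and by Corollary~\ref{cor:simple-root},
\[ P_\gamma h = \frac{\langle h, F\overline{f_\gamma}\rangle}{\langle f_\gamma, F\overline{f_\gamma}\rangle}\,f_\gamma, \]
with the denominator nonzero since $\langle f_\gamma,F\overline{f_\gamma}\rangle = \psi^+(\gamma) Z'(\gamma)$ and $Z'(\gamma)\ne 0$ (Remark~\ref{rem:expression-Z-prime}). The one-dimensional operator $P_\gamma B P_\gamma$ acts on $\operatorname{span}\{f_\gamma\}$ as
\[ P_\gamma B P_\gamma f_\gamma = P_\gamma(Bf_\gamma) = \frac{\langle Bf_\gamma, F\overline{f_\gamma}\rangle}{\langle f_\gamma,F\overline{f_\gamma}\rangle}\,f_\gamma, \]
whence the unique eigenvalue $\mu^{(1)}$ is exactly the coefficient appearing in~\eqref{eq:asymptotic-expansion}. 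Combining with (i) gives~\eqref{eq:asymptotic-expansion}.

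The main obstacle, conceptually, is ensuring one is entitled to use the finite-dimensional eigenvalue expansions of Kato in this non-self-adjoint Hilbert-space setting; this reduces cleanly, via the total projection $P_\gamma(\epsilon)$ and the holomorphic type (A) structure, to a question about the finite matrix representing $L+\epsilon B$ on $P_\gamma(\epsilon)L^2(\mu)$. The technical input that makes everything work is the compactness of the resolvent (used twice, to isolate $\gamma$ and to isolate the perturbed eigenvalue group) together with the explicit rank-one projection formula available from Corollary~\ref{cor:simple-root} in the simple case.
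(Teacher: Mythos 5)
Your argument is correct and takes essentially the same route as the paper: compactness of the resolvent (propagated to $L+\epsilon B$) to ensure the spectrum consists of isolated eigenvalues of finite multiplicity, Kato's analytic perturbation theory for the resulting holomorphic family of type (A) (the paper cites \cite[Theorem VIII.2.6]{Kato1995} while you unfold the argument via the total projection $P_\gamma(\epsilon)$ and \cite[Theorem IV.3.18]{Kato1995}, which amounts to the same thing), and finally the explicit rank-one projection formula from Corollary~\ref{cor:simple-root} to identify the first-order coefficient in part (ii). You merely fill in more of the standard Kato machinery that the paper delegates to a citation; the one explicit verification you add, that $\langle f_\gamma, F\overline{f_\gamma}\rangle = \psi^+(\gamma)Z'(\gamma)\neq 0$ since $Z'(\gamma)\neq 0$ and $\psi^+(\gamma)\psi^-(\gamma)=1$, is a worthwhile detail.
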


\begin{proof}
	Since $L$ has compact resolvent, also $L + \epsilon B$ has compact resolvent by \cite[Proposition III.1.12]{EngelNagel2000}, so that $\sigma(L + \epsilon B) = \sigma_p(L+ \epsilon B)$.
	Using that the dependence of $L + \epsilon B$ on $\epsilon$ is holomorphic, by \cite[Theorem VII.1.3]{Kato1995} the resolvent $(\zeta- L - \epsilon B)^{-1}$ is jointly holomorphic in $(\zeta, \epsilon)$ for sufficiently small $\epsilon$. Then \cite[Theorem VII.1.7]{Kato1995} establishes that spectral projections depend holomorphically on $\varepsilon$, in the sense of \cite[Equation (VII.1.4)]{Kato1995}. 
	It then follows that $\gamma$ is a stable eigenvalue in the terminology of~\cite[Section VIII.1.4]{Kato1995}): condition (i) of the definition of stable eigenvalue is satisfied by the mentioned joint analyticity of $(\zeta- L -\epsilon B)^{-1}$ around $\gamma$ and condition (ii) is satisfied by the mentioned analyticity of the spectral projections. We may therefore apply \cite[Theorem VIII.2.6]{Kato1995} to conclude (i). Finally (ii) follows from (i) and the expression for $P_{\gamma}$ obtained in Corollary~\ref{cor:simple-root}.
\end{proof}

\begin{remark}[Perturbations in the symmetric case] \label{rem:expansion-symmetric}
	If Assumption~\ref{ass:symmetric} is satisfied, and if additionally the bounded perturbation $B : L^2(\mu)$ respects the direct sum $B : \mathcal H_S \oplus \mathcal H_A \rightarrow \mathcal H_S \oplus \mathcal H_A$, then the perturbed operator $L + \epsilon B$ decomposes through the transformation $T$, i.e.,
	\[ T ( L + \epsilon B) T^{\star} = \begin{pmatrix}L^+  + \epsilon B^+ & 0 \\ 0 & L^- + \epsilon B^- \end{pmatrix}\]
	with $L^{\pm}$ as before and $B^{\pm} : L^2(\nu) \rightarrow L^2(\nu)$ bounded linear mappingss.
	If moreover the conditions of Theorem~\ref{thm:spectrum} are satisfied and $\gamma$ is a simple eigenvalue of $L^{\pm}$, then we obtain analogously to Proposition~\ref{prop:perturbation} an asymptotic expansion for the eigenvalues $\gamma(\epsilon)$ of the generators $L^{\pm} + \epsilon B^{\pm}$ given by
	\begin{equation} \label{eq:asymptotic-expansion-symmetric} \gamma(\epsilon) = \gamma + \epsilon \langle B^{\pm} f_{\gamma}, J \overline f_{\gamma}^{\pm} \rangle_{L^2(\nu)} / \langle f_{\gamma}^{\pm}, J \overline f_{\gamma}^{\pm} \rangle_{L^2(\nu)} + o(\epsilon),\end{equation}
	where $\gamma$ is a simple eigenvalue of $L^{\pm}$ and $f_{\gamma}^{\pm}$ is the associated eigenfunction as given by Theorem~\ref{thm:spectrum-symmetric} and we recall that $J f(x) = f(-x)$.
\end{remark}

\subsection{Example (additive perturbation of the switching intensity)}

Consider the bounded perturbation operator $B f = F f - f$, so that $L + \epsilon B$ admits the expression
\[ (L+\epsilon B) f(x,\theta) = \theta \partial_x f(x,\theta) + (\max( \theta U'(x), 0) + \epsilon ) (f(x,-\theta) - f(x,\theta)).\]
We see that the perturbation $\epsilon B$ effectively adds a constant $\epsilon$ to the switching intensity. 
In case the potential function $U$ is such that the assumptions of Theorem~\ref{thm:spectrum} are satisfied and furthermore all eigenvalues of $L$ are simple, then by Proposition~\ref{prop:perturbation}-(ii) the perturbed eigenvalues $\gamma(\epsilon)$ allow the asymptotic expansion
\begin{align*} \gamma(\epsilon) &= \gamma + \epsilon \langle B f_{\gamma}, F \overline f_{\gamma} \rangle_{L^2(\mu)} / \langle f_{\gamma}, F \overline f_{\gamma} \rangle_{L^2(\mu)} + o(\epsilon) \\
& = \gamma + \epsilon \left(\langle F f_{\gamma}, F \overline f_{\gamma} \rangle_{L^2(\mu)} / \langle f_{\gamma}, F \overline f_{\gamma} \rangle_{L^2(\mu)}  - 1 \right) + o(\epsilon) \\
& = \gamma + \epsilon \left(\langle f_{\gamma}, \overline f_{\gamma} \rangle_{L^2(\mu)} / \langle f_{\gamma}, F \overline f_{\gamma} \rangle_{L^2(\mu)}  - 1 \right) + o(\epsilon),
\end{align*}
where we subsequently used~\eqref{eq:asymptotic-expansion}, the expression for $B$, and finally the fact that $\langle F f, F g \rangle_{L^2(\mu)} = \langle f, g \rangle_{L^2(\mu)}$ for any $f, g \in L^2(\mu)$.

Furthermore the operator $B$ satisfies the condition stated in Remark~\ref{rem:expansion-symmetric} of preserving the decomposition $\mathcal H_S \oplus \mathcal H_A$. We compute 
\[ T B T^{\star} f(x,+1) = f(-x,+1) - f(x,+1), \quad \text{and} \quad TBT^{\star} f(x,-1) = - f(-x,-1) - f(x,-1),\]
so that in the notation of Remark~\ref{rem:expansion-symmetric}, we have 
\[ B^{\pm} f =  \pm  J f - f.\]
This means that if the additional assumption of symmetry (Assumption~\ref{ass:symmetric}, $U(x) = U(-x)$) is satisfied, then we may determine from~\eqref{eq:asymptotic-expansion-symmetric} the perturbed eigenvalues as
\begin{align*} \gamma(\epsilon) & = \gamma + \epsilon \langle B^{\pm} f_{\gamma}, J \overline f_{\gamma}^{\pm} \rangle_{L^2(\nu)} / \langle f_{\gamma}^{\pm}, J \overline f_{\gamma}^{\pm} \rangle_{L^2(\nu)} + o(\epsilon) \\
& =  \gamma + \epsilon \left( \langle \pm J f_{\gamma}^{\pm}, J \overline f_{\gamma}^{\pm} \rangle_{L^2(\nu)} / \langle f_{\gamma}^{\pm}, J \overline f_{\gamma}^{\pm} \rangle_{L^2(\nu)} - 1 \right) + o(\epsilon) \\
& = \gamma + \epsilon \left( \langle \pm f_{\gamma}^{\pm}, \overline f_{\gamma}^{\pm} \rangle_{L^2(\nu)} / \langle f_{\gamma}^{\pm}, J \overline f_{\gamma}^{\pm} \rangle_{L^2(\nu)} - 1 \right) + o(\epsilon), 
\end{align*}
where we should read `$+$' or `$-$' in the above expression depending on whether $\gamma \in \sigma(L^+) = \Sigma^+$ or $\gamma \in \sigma(L^-) = \Sigma^-$.

Unfortunately the expression does not seem to simplify further and in order to compute the above expansion in particular situations we will have to resort to numerical computation of the integrals involved.

\section{Examples}
\label{sec:examples}

\subsection{A family of medium-heavy to light-tailed symmetric distributions}
\label{sec:family}

Consider 
\begin{equation} \label{eq:symmetricfamily} U(x) = \frac 1 {\beta}\left[(1 + x^2)^{\beta/2} -1\right] \quad \text{where} \quad \beta > 1.\end{equation} 
We compute
\begin{align*}
U'(x) &=  x (1+x^2)^{\beta/2 -1}, \\
U''(x) & =  (1+x^2)^{\beta/2 -2} \left(1 +  (\beta - 1) x^2 \right).
\end{align*}

The case $\beta = 2$ corresponds to the standard normal distribution. Note that for $\beta \leq 1$ already the most basic assumptions of our theory (e.g., Assumption \ref{ass:compactness}) are not satisfied.

\begin{lemma}
	\label{lem:assumptions-family}
	Suppose $U$ is given by~\eqref{eq:symmetricfamily}. Then Assumptions~\ref{ass:sobolev}, \ref{ass:compactness}, \ref{ass:unimodal}, \ref{ass:polynomial-tail} and  \ref{ass:symmetric} are satisfied.
% 	Assumption~\ref{ass:m-convex} is satisfied if and only if $\beta \geq 2$.
\end{lemma}

% Note that $U''(x)$ is bounded from below by a positive constant if and only if $\beta \geq 2$, high
\begin{proof}
	Assumptions~\ref{ass:unimodal} and \ref{ass:symmetric} are immediate. From the expression for $U''(x)$ it follows that $U''(x) \geq 0$ for all $x \in \R$ and $\beta > 1$.
	We have
	\begin{align*}
	\frac{U''(x)}{(U'(x))^2} = (1 + x^2)^{-\beta/2} (1 + (\beta -1)x^2) x^{-2}  \sim (\beta-1) |x|^{-\beta},
	\end{align*}
	establishing that Assumption~\ref{ass:sobolev} is satisfied.
	Furthermore $|U'(x)| \sim |x|^{\beta -1} \rightarrow \infty$ as $|x| \rightarrow \infty$, establishing Assumption~\ref{ass:compactness}.
	
	Asymptotically, $U''(x) \sim (\beta - 1) |x|^{\beta - 2}$ as $|x| \rightarrow \infty$, which establishes Assumption~\ref{ass:polynomial-tail} for $\beta \geq 2$, using Lemma~\ref{lem:implications-m-convexity}.
% 	
% 	If $\beta \geq 4$ then $m$-strong convexity (Assumption~\ref{ass:m-convex}) is immediate from the expression for $U''(x)$.
% 	We compute
% 	\[ U^{(3)}(x) = (\beta-2) x (1+x^2)^{\beta/2-3} (3 + (\beta-1)x^2),\]
% 	establishing that $U$ satisfies Assumption~\ref{ass:m-convex} if and only if $\beta \geq 2$ and therefore implying Assumption~\ref{ass:polynomial-tail} for $\beta \geq 2$ (by Lemma~\ref{lem:implications-m-convexity}).
	
	Finally we establish Assumption~\ref{ass:polynomial-tail} for $1 < \beta < 2$. Let $M > 1$. By symmetry it suffices to consider $y \geq x \geq M$ in Assumption~\ref{ass:polynomial-tail}. For all $x \geq M$, $(1+x^2) \leq \left(1 + \frac 1 {M^2} \right) x^2$. It follows that 
	\begin{equation} \label{eq:gradient-estimate} U'(x) = x (1+x^2)^{\beta/2-1} \geq \left( 1 + \frac 1 {M^2} \right)^{\beta/2 - 1} x^{\beta-1}, \quad x \geq M.\end{equation}
	Define $h(\xi) = U(x+\xi) - U(x) - m \xi^{\beta}$ for $\xi \geq 0$ and fixed $x \geq M$. Then, using~\eqref{eq:gradient-estimate}
	\begin{align*}
	h'(\xi) & = U'(x + \xi) - m \beta \xi^{\beta-1} \geq \left(1 + \frac 1 {M^2} \right)^{\beta/2 -1} \left( x + \xi \right)^{\beta - 1} - m \beta \xi^{\beta -1} \\
	&\geq \left[ \left(1 + \frac 1 {M^2} \right)^{\beta/2 -1}  - m \beta  \right] \xi^{\beta - 1} = 0,
	\end{align*}
	by taking $m = \frac 1 {\beta}  \left(1 + \frac 1 {M^2} \right)^{\beta/2 -1}$.
	We see that $h(\xi)$ is strictly non-decreasing as a function of $\xi$ for any $x \geq M$.
	Thus $h(y-x) \geq h(0) = 0$ for all $y$, establishing that
	\[ U(y) \geq U(x) + m |y-x|^{\beta}, \quad y \geq x \geq M,\]
	as required.
\end{proof}

\subsection{Gaussian distribution}
\label{sec:gaussian}

Suppose $U(x) = x^2/(2\sigma^2)$. Without loss of generality we assume that $\sigma = 1$ and consider $U(x) = x^2/2$. 
This is an instance of the family discussed above, with $\beta = 2$. In particular Assumptions~\ref{ass:sobolev} until~\ref{ass:symmetric} are satisfied.
Let $\erfc(z)= 1- \erf(z)$ denote the complimentary error function, with the error function $\erf(z)$ given by 
\[ \erf(z) = \frac 2 {\sqrt{\pi}} \int_0^z e^{-t^2} \, d t, \quad z \in \C.\]

\begin{proposition}
	Suppose $U(x) = x^2/2$ and $\lambda_{\refr}(x)=0$ for all $x$. Then the spectrum of the zigzag semigroup is given by 
	\[ \sigma(L) =\sigma_p(L) = \Sigma^+ \cup \Sigma^-,\]
	where
	\[ \Sigma^+ = \{ 0 \} \cup \{ \gamma \in \C:  \erfc(\sqrt{2} \gamma) = 0\} \quad \text{and} \quad 
	\Sigma^-  =  \{ \gamma \in \C : \sqrt{\pi}  \gamma e^{2 \gamma^2} \erfc(\sqrt{2} \gamma) = \sqrt{2}\}.\]
	All eigenvalues $\gamma \in \sigma(L)$ are algebraically simple.
\end{proposition}

\begin{proof}
	As established in Theorem~\ref{thm:spectrum-symmetric}, the spectrum consists of eigenvalues only, and is given by $\sigma(L) = \Sigma^+ \cup \Sigma^-$, where
	\[ \Sigma^{\pm} = \sigma(L^{\pm}) = \{ \gamma \in \C: \psi^+(\gamma) = \pm 1\}.\]
	By partial integration (Lemma~\ref{lem:psi})
	\begin{equation} \label{eq:psi-gaussian-case} \psi^+(\gamma) = 1 - \sqrt{2 \pi} \gamma e^{2 \gamma^2} \erfc(\sqrt{2} \gamma), \quad \gamma \in \C,\end{equation}
	resulting in the characterization of $\sigma^{\pm}$.
	From~\eqref{eq:psi_alternative} and~\eqref{eq:dpsi} we have
	\begin{equation} \label{eq:relation-psi-gaussian-case} \psi^+(\gamma) = 1 + \gamma \frac{d}{d\gamma} \psi^+(\gamma) - 4 \gamma^2 \psi^+ (\gamma). \end{equation}
	Note that $\psi^-(\gamma) = \psi^+(\gamma)$ by symmetry of $U$.
	Since $Z(\gamma) = 1 - (\psi^+(\gamma))^2$, it follows that $\frac{d}{d \gamma} Z(\gamma) = - 2 \psi^+(\gamma) \frac{d}{d\gamma} \psi^+(\gamma)$.
	Thus if $\gamma$ is an eigenvalue with algebraic multiplicity larger than one, then $\psi^{+}(\gamma) = \pm 1$ and $\frac{d \psi^+}{d \gamma} = 0$. Assume $\frac{d}{d \gamma}\psi^+(\gamma) = 0$. 
	Using~\eqref{eq:relation-psi-gaussian-case}, in case $\psi^+(\gamma) = 1$, then necessarily $\gamma = 0$; however it was established in the proof of Proposition~\ref{prop:dominant-eigenvalue} that $\frac{d}{d\gamma} Z(0) \neq 0$. In case $\psi^+(\gamma) = -1$, using again~\eqref{eq:relation-psi-gaussian-case}, we find that $\gamma = \pm \frac 1 2 \sqrt{2} \i$, which is however not a root of $\psi^+$ as defined in~\eqref{eq:psi-gaussian-case}.
% 	However by~\eqref{eq:psi-gaussian-case} we find that $\frac{d}{d\gamma} \psi^+(\pm \tfrac 1 2 \sqrt{2} \i) \neq 0$. 
% 	Furthermore it was established in the proof of Proposition~\ref{prop:dominant-eigenvalue} that $\frac{d}{d\gamma} Z(0) \neq 0$.
\end{proof}

Asymptotic expansions for the elements of $\Sigma^+$ can be found in \cite{fettis1973complex}. However for a complete computation of the spectrum we have to rely on numerical computation.

\subsection{Numerical results}
\label{sec:numerics}

By Theorems~\ref{thm:spectrum} and~\ref{thm:spectrum-symmetric}, the spectrum of $L$ and $L^{\pm}$ may be computed as the roots of $Z$ (see \eqref{eq:Z}) and $Z^{\pm}$ (see \eqref{eq:Z-pm}), respectively. For ease of notation we only consider roots of  $Z$ but the exposition applies equally to roots of $Z^{\pm}$.

A suitable numerical routine for determining roots of $Z$ is provided by \cite{Dellnitz2002}, to which we refer for details. For our purposes it is important to remark that the method of \cite{Dellnitz2002} relies on the ability to integrate $Z'(\zeta)/Z(\zeta)$ efficiently along line segments in the complex plane. We have
\[ \frac{Z'(\zeta)}{Z(\zeta)} = - \frac{\psi^-(\zeta) \frac{d}{d\zeta} \psi^+(\zeta) + \psi^+(\zeta) \frac{d}{d\zeta} \psi^-(\zeta)}{1 - \psi^+(\zeta) \psi^-(\zeta)}.\]
In the case of a Gaussian target distribution (Section~\ref{sec:gaussian}), these functions may be expressed in terms of the error function. However for the general family considered in Section~\ref{sec:family} we have to rely on numerical integration to determine the value of $Z'(\zeta)/Z(\zeta)$.

Computer code (in Julia) for visualization of the spectrum and perturbations may be found at \cite{Bierkens2019githubspectral}. Graphical depictions of the spectrum, including the effect of perturbations by a positive switching rate, for the Gaussian case ($\beta =2$) and two other cases are provided in Figure~\ref{fig:plot}. 
In the Gaussian case the rightmost (pairs of) eigenvalues are given (up to 6 significant digits) by $0$, $-0.425665 \pm  1.02295\i$, $-0.957995 \pm 1.40818 \i  $, $-1.26616 \pm 1.66757\i , -1.53940 \pm  -1.90293\i$ and we find that the spectral gap is given by $0.425665$. An interesting observation is that a positive refreshment rate increases the spectral gap (since the spectrum moves towards the left). This phenomenon is also observed in \cite{MicloMonmarche2013}. Furthermore it seems that making the tail more heavy (i.e. decreasing $\beta$) results in an increase of the spectral gap.

\begin{figure}[ht!]
	{\begin{center}
			\begin{subfigure}{0.9 \textwidth}
				\includegraphics[width=\textwidth]{./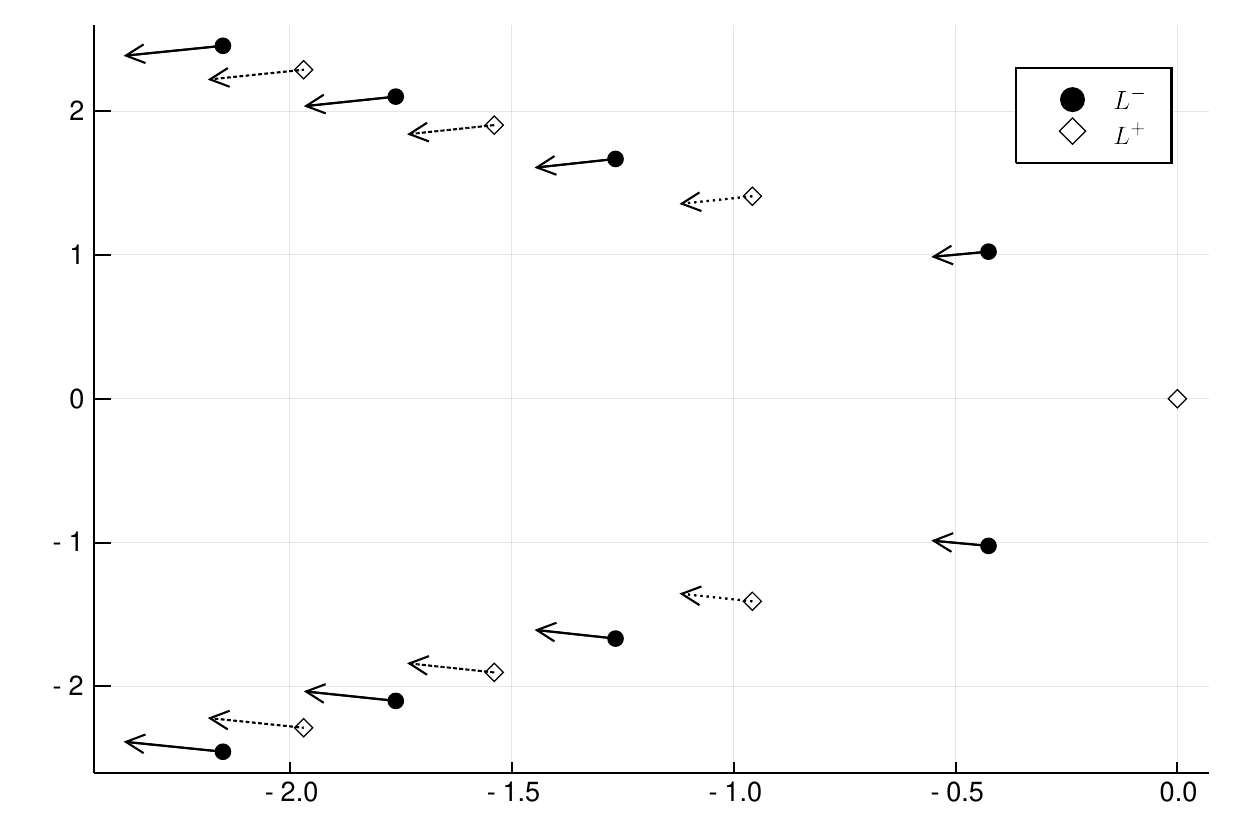}
				\caption{Standard normal distribution ($\beta = 2$).}
			\end{subfigure} \\
			\begin{subfigure}{0.45 \textwidth}
				\includegraphics[width=\textwidth]{./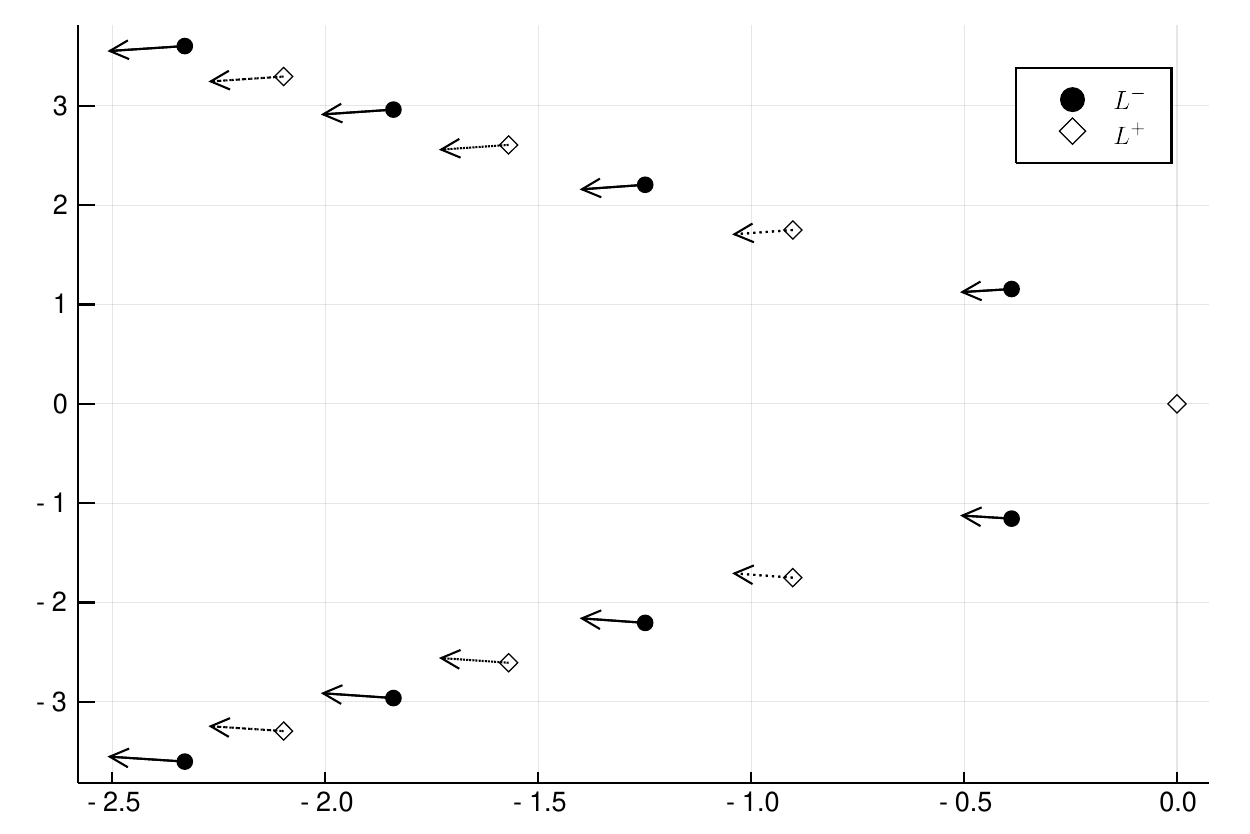}
				\caption{`Light tailed' case $\beta = 2.5$.}
			\end{subfigure}
			\begin{subfigure}{0.45 \textwidth}
				\includegraphics[width=\textwidth]{./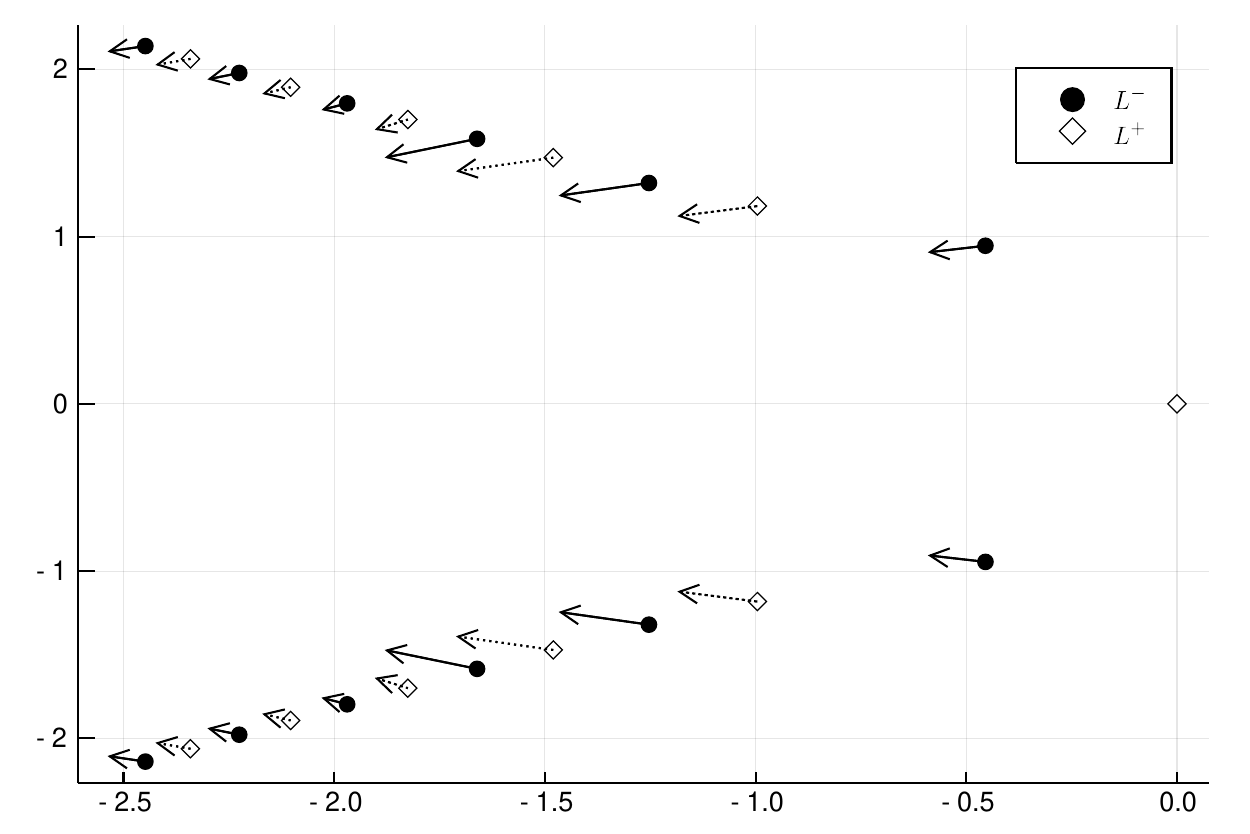}
				\caption{`Heavy tailed' case $\beta = 1.75$.}
			\end{subfigure}
			\caption{The spectra of $L^+$ and $L^-$ for the distributions described in Section~\ref{sec:family}, along with the directions in which the spectrum is perturbed under a small refreshment rate $\lambda_{\refr} = \epsilon > 0$.}
			\label{fig:plot}
	\end{center}}
\end{figure}

\section{Discussion}

In this section we will explore the relation between the results obtained in this paper, and other results concerning exponential convergence to equilibrium.
Assume $\mu(E) < \infty$ and 
recall the notation $\pi$ for the normalization of $\mu$, defined by~\eqref{eq:def-pi}.

\subsection{Connection between spectral gap and geometric ergodicity}

Suppose we have a spectral gap for the zigzag semigroup; e.g., assume the conditions of Theorem~\ref{thm:decay} are satisfied. We consider a time discretization of the zigzag semigroup $(P(t))_{t \geq 0}$ by defining, for a fixed $t_0 > 0$, the Markov kernel
\[ Q(\eta, A) := \int P(t_0)\1_A(x,\theta) \, \eta(dx, d \theta), \]
where $\eta$ is any probability measure on $E = \R \times \{-1,+1\}$.

% let $\pi$ denote the probability distribution on $E$ given by $\pi(A) = \mu(A)/\mu(E)$, for any measurable set $A \subset E$. 
For any signed measure $\eta \ll \pi$ we define $ \| \eta \|_{L^2(\pi)}^2 = \int \left(\frac{d \eta}{d \pi}\right)^2 \, d \pi$.
We say that a Markov chain with transition kernel $Q$ on a general state space $(X,\mathcal X)$ is \emph{$L^2(\pi)$-geometrically ergodic} if there is a function $c(\eta) <\infty$ and a constant $\rho < 1$ such that 
\[ \| Q^n(\eta, \cdot) - \pi \|_{L^2(\pi)} \leq c(\eta) \rho^n,  \]
for all Borel probability distributions $\eta$ on $E$ satisfying $\|\eta\|_{L^2(\pi)} < \infty$.

By considering the adjoint semigroup of the zigzag semigroup we then have the following result.

\begin{corollary}
Under the assumptions of Theorem~\ref{thm:decay}, the Markov kernel $Q$ is $L^2(\pi)$-geometrically ergodic.
\end{corollary}
\begin{proof}
We have that $Q^n(\eta,A) = \int_A P^{\star}(n t_0) \frac{d \eta}{d \pi}\, d \pi$ where $P^{\star}(t)$ is the adjoint semigroup of $P(t)$. Since the result of Theorem~\ref{thm:decay} for $(P(t))_{t \geq 0}$ carries over to $(P^{\star}(t))_{t \geq 0}$ (using Theorem~\ref{thm:semigroup}), the stated claim follows.
\end{proof}

An alternative notion of geometric ergodicity is the following. We say that a chain is \emph{$\pi$-almost everywhere geometrically ergodic} if there is a function $W : E \rightarrow \infty$ and $\rho < 1$ such that for $\pi$-almost all $z$,
\[ \| Q^n(z,\cdot) - \pi\|_{\mathrm{TV}} \leq W(z) \rho^n.\]

Importantly the latter  notion is phrased in terms of $\pi$-almost all initial positions. However, as a result of \cite[Theorem 1]{roberts2001geometric} we have indeed that $Q$ is also $\pi$-almost everywhere geometrically ergodic.
% Suppose the assumptions of Theorem~\ref{thm:decay} are satisfied and define a discrete time Markov transition kernel  by  $Q((x,\theta),A) = P(t_0) \1_{A} (x,\theta)$ for the zigzag semigroup $(P(t))_{t \geq 0}$ and some $t_0 > 0$. 
% In this case the assumptions of the following proposition are satisfied.  In particular we have $L^2(\pi)$-geometric ergodicity by Theorem~\ref{thm:decay}, and $\psi$-irreducibility by \cite[Theorem 5]{BierkensRoberts2017}.
% 
% We remark that this proposition does not rely on a reversibility assumption.
% 
% \begin{proposition}[{\cite[Theorem 1]{roberts2001geometric}}]
% 	\label{prop:L2-geometricallyergodic}
% 	A $\psi$-irreducible chain with stationary distribution $\mu$ which is $L^2(\pi)$ geometrically ergodic is $\pi$-almost everywhere geometrically ergodic.
% \end{proposition}
% 
% Thus, under the assumptions of Theorem~\ref{thm:decay}, we may conclude $\pi$-almost everywhere geometric ergodicity.
% 

\subsection{Comparison to hypocoercivity analysis}

In \cite{Andrieu2018} conditions for hypocoercivity of piecewise deterministic process such as the zigzag process are established. We will rephrase the result of \cite{Andrieu2018} to fit in the present context.

\begin{theorem}[{\cite[Theorem 1]{Andrieu2018}}]
	\label{thm:hypocoercivity}
	Suppose the following assumptions hold:
	\begin{itemize}
		\item[(i)] $U\in C^3(\R)$, $\lambda_{\refr}$ is continuous and $e^{-U} \in L^1(\R)$.
		\item[(ii)] There is a $p \geq 0$ such that $\sup_{x \in \R} |U^{(3)}(x)| / (1 + |x|^p) < \infty$.
		\item[(iii)] There is a constant $c_1 \geq 0$ such that $U''(x) \geq - c_1$ for all $x \in \R$.
		\item[(iv)] $\liminf_{|x| \rightarrow \infty}\{ |U'(x)|^2/2 - U''(x)\} > 0$.
		\item[(v)] The refreshment rate $\lambda_{\refr} : \R \rightarrow (0,\infty)$ is bounded from below and from above as follows. There exists $\underline \lambda> 0$ and $c_{\lambda} \geq 0$ such that for all $x \in \R$,
		\[ 0 < \underline \lambda \leq \lambda_{\refr}(x) \leq \underline \lambda ( 1 + c_{\lambda} |U'(x)|).\]
	\end{itemize}
	Then there exist constants $C > 0$ and $\alpha > 0$ such that, for any $f \in L^2_0(\mu)$ and $t \geq 0$,
	\[ \|P(t) f\|_{L^2_0(\mu)} \leq C e^{-\alpha t} \| f\|_{L^2_0(\mu)}.\]
\end{theorem}
\begin{proof}
We may write the generator of the zigzag semigroup in the notation of \cite[Equation (1)]{Andrieu2018}, 
\[ L f(x,\theta) = \theta \partial_x f(x,\theta) + \lambda_1(x,\theta) ( \mathcal B_1 - I) f(x,\theta) + m_2^{1/2} \lambda_{\text{ref}}(x) \mathcal R_v f(x,\theta),\]
where
\begin{align*} \lambda_1 f(x,\theta) & = (\theta F_1(x))_+, \quad \text{for} \quad   F_1(x) = U'(x), \\
  \mathcal B_1 f(x,\theta) & = f(x,-\theta), \quad  m_2 = 1, \quad \lambda_{\text{ref}}(x) = 2 \lambda_{\refr}(x), \quad \mathcal R_v f(x,\theta) =\tfrac 1 2 (f(x,-\theta) - f(x,\theta)).
\end{align*}
(The factor 2 had to be redistributed between the refreshment rate and the refreshment kernel to be consistent with the definition of $\mathcal R_v$ in \cite{Andrieu2018}.)

We verify that the stated assumptions imply the assumptions A1 and H1-H6 of \cite{Andrieu2018}. Some notations below are from \cite{Andrieu2018}, and may override notation from this paper; in particular $\nu$ for the uniform marginal stationary distribution on the velocities $\{-1,+1\}$.

By Lemma~\ref{lem:core} and Theorem~\ref{thm:semigroup}, Assumption A1 of \cite{Andrieu2018} is satisfied.

By (ii), $U \in C^{(3)}_{\text{poly}}(\R)$. Together with assumptions (iii) and (iv) this implies H1.

We have $F_1(x) = U'(x)$, which by (i) is in $C^2(\R)$. It is then clear that H2 is satisfied.

H3 is satisfied by taking $\varphi(s) = \max(0,s)$.

H4 is satisfied by taking $V = \{-1,+1\}$ and $\nu =\tfrac 1 2 (\delta_{-1} + \delta_{+1})$.

H5 is satisfied for $\mathcal R_v :=  \Pi_v-I$ where $\Pi_v f = \nu(f)$. In particular
if $g \in L^2_0(\nu)$ we have, writing $g = (g_1, g_2)  =(g_1,-g_1)$,
\[ \langle -\mathcal R_v g, g \rangle_{L^2(\nu)} = - \tfrac 1 2 g_1^2 -\tfrac 1 2  g_2^2 + \tfrac 1 4 (g_1+g_2)^2 
% = -\tfrac 1 4 g_1^2 + \tfrac 1 2 g_1g_2 - \tfrac 1 4 g_2^2 
= -\tfrac 1 4 (g_1-g_2)^2 = \|g\|_{L^2(\nu)} .\]

Finally H6 is identical to (v).

\end{proof}

Bounds for the constants $C$ and $\alpha$ can, in principle, be obtained in explicit form although this requires a tedious effort.
The result above may be compared to the same result of Theorem~\ref{thm:decay}, which depends on Assumptions~\ref{ass:sobolev}, \ref{ass:compactness}, \ref{ass:unimodal} and \ref{ass:polynomial-tail}. The above conditions are different from our conditions in several respects. Condition (iv) in the above theorem is stronger than our Assumption~\ref{ass:sobolev}, for example. Furthermore a strictly positive refreshment rate is required. On the other hand we require that $|U'(x)| \rightarrow \infty$ (Assumption~\ref{ass:compactness}), a zero refreshment rate and unimodal target distribution to obtain the explicit characterization of the eigenvalues.

\subsection{Comparison to exponential ergodicity}

In \cite{BierkensRoberts2017} the following result on exponential ergodicity of the zigzag process is established.  A function $V : E \rightarrow \R$ is called \emph{norm-like} if $V(x,\theta) > 0$ for all $(x,\theta) \in E$ and $\lim_{|x|\rightarrow \infty} V(x,\theta) = \infty$ for $\theta = \pm 1$.

\begin{theorem}[{\cite[Theorem 5]{BierkensRoberts2017}}]
	\label{thm:exponentialergodicity}
	Suppose $U$ is continuously differentiable, $\lambda_{\refr}$ is continuous, and for $\lambda^{\pm}(x) = (\pm U'(x) \vee 0) + \lambda_\refr(x)$, we have that 
	\[\liminf_{x \rightarrow \infty} \lambda^+(x) > \limsup_{x \rightarrow \infty}\lambda^-(x) \quad \text{and} \quad \liminf_{x \rightarrow -\infty} \lambda^-(x) > \limsup_{x \rightarrow -\infty} \lambda^+(x).\]
	Then there are constants $c > 0$, $\alpha > 0$ and a continuous norm-like function $V \in L^2(\mu)$, $V > 0$ on $E$, such that for all $(x,\theta) \in E$,
	\[ |P(t) f(x,\theta) - \pi(f)| \leq c (1 + V(x,\theta)) e^{-\alpha t}, \quad t \geq 0, \] for all measurable $f : E \rightarrow \R$ satisfying $|f(x,\theta)| \leq 1 + V(x,\theta)$ on $E$.
\end{theorem}

Note that here the family of operators $P(t)$ represents the extension of the Markov semigroup to a suitable space of measurable functions.
In the case with constant refreshment rate the condition of Theorem~\ref{thm:exponentialergodicity} reduces to the requirement that $\liminf_{x \rightarrow \infty} U'(x) > 0$ and $\limsup_{x \rightarrow -\infty} U'(x) < 0$.
We see that the above result implies, under only mild conditions, exponentially fast convergence in $L^2(\pi)$ for functions $f$ satisfying $|f(x,\theta)| \leq 1 + V(x,\theta)$. However, this class of functions does not contain the full space $L^2(\pi)$. Indeed, by the proof of \cite[Theorem 5]{BierkensRoberts2017}, the function $V$ is growing at an exponential rate as $|x| \rightarrow \infty$, putting a restriction on the growth of the functions $f(x,\theta)$. 

\subsection*{Acknowledgements}
We acknowledge helpful discussions with Sonja Cox, Mark Veraar, Pierre Monmarch\'e, Nils Berglund, Andi Wang and Boris Nectoux. We also acknowledge two reviewers and the associate editor for their valuable remarks.

% 
% \bibliographystyle{plain}
% 
% \bibliography{spectral}

\appendix

\section{Technical lemmas}

\subsection{Weak derivatives}

In this section we recall some basic results on weak derivatives as used in Section~\ref{sec:semigroup}.
A function $f : \R \rightarrow \R$ is said to have a weak derivative $f'$ if, for all $\varphi \in C_c^{\infty}(\R)$, we have
\[ \int_{\R} f \varphi' \, d x = - \int_{\R} f' \varphi\, d x.\]
If furthermore, for $p \geq 1$, $\left. f \right|_U$ and $\left. f' \right|_U \in L^p(U)$ for any open $U \subset \R$ with compact closure, then we write $f \in W^{1,p}_{\loc}(\R)$.

\begin{lemma}
\label{lem:productrule}
Suppose $f \in W^{1,p}_{\loc}(\R)$ and $g \in C^1(\R)$. Then $f g \in W^{1,p}_{\loc}(\R)$, and $(f g)' = f' g + fg'$.
\end{lemma}

\begin{proof}
Let $\varphi \in C_c^{\infty}(\R)$ and write $K = \operatorname{supp} \varphi$. By standard approximation arguments we may find $\tilde g \in C_c^{\infty}(R)$ such that $\sup_{x \in K} |\tilde g(x) - g(x)| + |\tilde g'(x) - g'(x)| < \varepsilon$. We then have by the definition of the weak derivative that
\[ \int_{\R} f \tilde g \varphi' \, d x = \int_{\R} f (\tilde g \varphi)' \, d x - \int_{\R} f \tilde g' \varphi \, d x = - \int_{\R} f' \tilde g \varphi \, d x - \int_{\R} f \tilde g' \varphi \, d x. \]
Since $\varepsilon > 0$ is arbitrary, and $\varphi$ has compact support, this yields
\[ \int_{\R} f g \varphi' \, d x =  - \int_{\R} ( f' g + f g') \varphi \, d x\]
so that $f g$ has weak derivative $f' g + f g'$. Now since $g$ and $g'$ are bounded on any bounded set, and $f \in W^{1,p}_{\loc}(\R)$, it follows that $f g \in W^{1,p}_{\loc}(\R)$.
\end{proof}

\subsection{Assumption~\ref{ass:polynomial-tail} revisited}

The following lemma establishes that the constant $M$ in Assumption~\ref{ass:polynomial-tail} can, without loss of generality, be assumed to be equal to zero.

\begin{lemma}
	\label{lem:polynomial-tail}
	Suppose Assumption~\ref{ass:polynomial-tail} is satisfied for certain values $C \leq 0$, $M >0$, $m > 0$ and $p > 1$. Then it is also satisfied for $M = 0$. That is, for $\widetilde m := 2^{1-p} m$ and some $\widetilde C \leq C$ we have that 
	\begin{equation} \label{eq:polynomial-tail-2} U(y) \geq U(x) + \widetilde C + \widetilde m|x-y|^p \quad \text{for all $x,y$ for which $y \geq x \geq 0$ or $y \leq x \leq 0$}.\end{equation}
\end{lemma}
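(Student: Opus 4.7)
The plan is to reduce by symmetry to the case $y \geq x \geq 0$ (the case $y \leq x \leq 0$ follows by an entirely analogous argument on the negative half-line) and then split into three subcases depending on where $x$ and $y$ sit relative to the threshold $M$. When $M \leq x \leq y$ the hypothesis of Assumption~\ref{ass:polynomial-tail} directly gives what we want, since $\widetilde m \leq m$ and $\widetilde C \leq C$. For the subcase $0 \leq x \leq y \leq M$, I would exploit absolute continuity of $U$ from Assumption~\ref{ass:construction} to set $K := \sup_{[0,M]} U - \inf_{[0,M]} U < \infty$, which gives $U(y) - U(x) \geq -K$, and since $|y-x|^p \leq M^p$ this suffices as long as $\widetilde C$ is taken small enough to absorb $-K - \widetilde m M^p$.

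The main work will be the bridging subcase $0 \leq x \leq M \leq y$, which I would handle by telescoping $U(y) - U(x) = (U(y) - U(M)) + (U(M) - U(x))$. The first difference is controlled by the original hypothesis applied with $x$ replaced by $M$, giving $U(y) - U(M) \geq C + m(y-M)^p$, while the second is at least $-K$ by continuity on $[0,M]$. Then I would invoke the standard convexity inequality $(a+b)^p \leq 2^{p-1}(a^p + b^p)$ for $a, b \geq 0$ with $a = y - M$ and $b = M - x$, yielding
\[
|y-x|^p \leq 2^{p-1}\bigl((y-M)^p + (M-x)^p\bigr) \leq 2^{p-1}\bigl((y-M)^p + M^p\bigr).
\]
Multiplying through by $\widetilde m = 2^{1-p} m$ transforms this into $\widetilde m |y-x|^p \leq m(y-M)^p + m M^p$, which combined with the telescoped estimate gives $U(y) - U(x) \geq (C - K - m M^p) + \widetilde m |y-x|^p$.

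Finally I would choose $\widetilde C := \min\{C,\, -K - \widetilde m M^p,\, C - K - m M^p\}$, which is manifestly $\leq C$ and simultaneously validates all three subcases. The genuine technical point is the bridging subcase, and specifically the convexity inequality: this is exactly where the factor $2^{1-p}$ in the definition of $\widetilde m$ comes from, since it is the constant dictated by the power-mean inequality. Absorbing the boundary error $mM^p$ into $\widetilde C$ rather than into $\widetilde m$ is what keeps the growth rate of the polynomial tail unchanged and yields the clean statement of the lemma.
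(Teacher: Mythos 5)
Your proof is correct and takes essentially the same approach as the paper's: the same three-way case split ($M \le x \le y$, $0 \le x \le y \le M$, and the bridging case $0 \le x \le M \le y$), the same telescoping through $U(M)$, and the same power-mean (Jensen) inequality $(a+b)^p \le 2^{p-1}(a^p+b^p)$, which is precisely where the factor $2^{1-p}$ in $\widetilde m$ comes from. The only cosmetic difference is that on $[0,M]$ you discard the polynomial term and absorb everything into $\widetilde C$, whereas the paper keeps $m|x-y|^p$ there; both arrive at the same final constant $\widetilde C = C - K - mM^p$ (the paper's $C + \widetilde C_1$ with $\widetilde C_1 = C_2 - C_1 - mM^p$).
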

\begin{proof}
	Suppose Assumption~\ref{ass:polynomial-tail} holds. We prove~\eqref{eq:polynomial-tail-2} for $y \geq x \geq 0$; the proof for $y \leq x \leq 0$ is analogous. First suppose $0 \leq x \leq y \leq M$. Write $C_1 := \max_{0 \leq \xi \leq M} U(\xi)$ and $C_2 := \min_{0 \leq \xi \leq M} U(\xi)$. Define $\widetilde C_1 = C_2 - C_1 - m M^p \leq 0$. Then 
	\[ U(y) \geq U(x) + \widetilde C_1 +  m|x-y|^p.\]
	
	Next consider the situation $0 \leq x \leq M \leq y$. We have by Assumption~\ref{ass:polynomial-tail} that 
	\[ U(y) \geq U(M) + C + m | M -y|^p.\]
	Next
	\[ U(M) \geq U(x) + \widetilde C_1 + m |M-x|^p\] by the first part of this proof. Finally we have by Jensen's inequality that
	\[ |y-M|^p + |M-x|^p \geq 2^{1-p} |y-x|^p.\]
	It follows that 
	\[ U(y) \geq U(x) + \widetilde C_1 + m |M-x|^p + C + m | M-y|^p \geq  U(x) + \widetilde C_1  + C + 2^{1-p} m |y-x|^p. \]
	By assumption the required inequality also holds for $M \leq x \leq y$.
	The statement of the lemma follows by taking $\widetilde C := \min(C, \widetilde C_1, C+ \widetilde C_1) = C + \widetilde C_1$.
\end{proof}

\subsection{The operator $C$ and its core.}

As in Section~\ref{sec:compactness}, for $f \in W^{1,2}_{\text{loc}}(\R)$ let $C f = f' - U f'$, and (with slight abuse of notation) restrict $C$ to the domain $\mathcal D(C) = \{ f \in L^2(\nu) \cap W^{1,2}_{\text{loc}}(\R) : C f \in L^2(\nu)\}$. 
\begin{lemma}
\label{lem:C-core}
 $(C, \mathcal D(C))$ is closed and $C_c^{\infty}(\R)$ is a core for $(C, \mathcal D(C))$.
\end{lemma}
\begin{proof}
First we show that $(C, \mathcal D(C))$ is closed. Let $(f_n) \subset \mathcal D(C)$, $g_n = C f_n$, and assume $f_n \rightarrow f$ and $g_n \rightarrow g$ in $L^2(\nu)$. 
Then $f_{n_k} \rightarrow  f$ and $g_{n_k} \rightarrow g$ $\nu$-almost everywhere for a suitable subsequence. Therefore
\[ f_{n_k}'(x) = g_{n_k}(x) + U'(x) f_{n_k}(x) \rightarrow g(x) + U'(x) f(x), \quad \text{$\nu$-almost everywhere}.\]
Now fix an open set $F \subset \R$ with compact closure. Since $U'$ is bounded on $F$, it follows that the subsequence $(f_{n_k}') = (g_{n_k} + U' f_{n_k})$ is Cauchy in $L^2(F)$. Thus $f \in W^{1,2}_{\loc}(\R)$. Next
\begin{align*}
 \int_{\R} |C f -g|^2 d \nu & = \int_{\R} \lim_{k \rightarrow \infty} | C f_{n_k} - g_{n_k}|^2 d \nu \stackrel{\text{Fatou}}{\leq} \lim_{k \rightarrow \infty} \int_{\R}  | C f_{n_k} - g_{n_k}|^2 d \nu = 0.
\end{align*}
Therefore $\|C f \|_{L^2(\nu)} \leq \|C f - g \|_{L^2(\nu)} + \|g\|_{L^2(\nu)} < \infty$ and $C f = g$.

Next we will show that $C_c^{\infty}(\R)$ is a core for $(C, \mathcal D(C))$, in complete analogy to Lemma~\ref{lem:core}.
Let $f \in \mathcal D(C)$. First take functions $\xi_n$ as in the proof of Lemma~\ref{lem:core} and define $f_n(x) := f(x) \xi_n(x)$. Then $f_n \in \mathcal D(C)$, $f_n$ has compact support and $f_n \rightarrow f$ in $L^2(\nu)$ by dominated convergence, and
\[ \|C f_n - C f\|_{L^2(\nu)} \leq \|(C f)(\xi_n - 1)\|_{L^2(\nu)} + \| f \xi_n'\|_{L^2(\nu)} \rightarrow 0 \quad \text{as $n \rightarrow \infty$},\]
the first term by dominated convergence and the second term since $\sup_{x \in \R} |\xi_n'(x)| \leq c/n$ for some positive $c > 0$ independent of $n$.
This shows that compactly supported functions are dense in $\mathcal D(C)$ with respect to the graph norm.

Next take $\varphi_n \in C_c^{\infty}(\R)$, again as in the proof of Lemma~\ref{lem:core}. Let $f \in \mathcal D(C)$ with compact support, and define $f_n(x) = (f \star \varphi_n)(x)$. Then every $f_n \in C_c^{\infty}(\R)$ is supported in $\operatorname{supp}(f) + B(1)$ and $f_n \rightarrow f$ in $L^2(\nu)$. Since $\partial_x f_n = (\partial_x f)\star \varphi_n$ and $U'$ is bounded uniformly in $n$ on the support of $(f_n)$, it follows that $\|C f - C f_n\|_{L^2(\nu)} \rightarrow 0$.
\end{proof}

\subsection{Estimates in the proof of Theorem~\ref{thm:spectrum}}
In the following lemma we verify that the constants that appear as integrals in the proof of Theorem~\ref{thm:spectrum} are well defined.

\begin{lemma}
\label{lem:integrals}
Suppose Assumption~\ref{ass:compactness} holds. Let $h \in L^2(\nu)$. For any $\gamma \in \C$, we have that the following mappings are in $L^1(\R)$.
\begin{align} 
\label{eq:integral1} \xi & \mapsto U'(\xi) e^{-2 \gamma \xi - U(\xi)}, \\
\label{eq:integral2} \xi & \mapsto e^{-2 \gamma \xi - U(\xi)},\\
\label{eq:integral3} \xi & \mapsto e^{-\gamma \xi - U(\xi)} h(\xi), \\
\label{eq:integral4} \xi & \mapsto U'(\xi) e^{-\gamma \xi} \int_0^{\xi} e^{\gamma \eta} h(\eta) \, d \eta. 
\end{align}
\end{lemma}

\begin{proof}
Write $\alpha = \Re \gamma$. It suffices to check integrability over $[0,\infty)$, as integrability over $(-\infty,0]$ is analogous. Next, by continuity and where necessary local integrability of $h$ it suffices to check integrability over $[x_0,\infty)$ for some $x_0 > 0$. Let $x_0$ be sufficiently large, such that, for some constant $c > 0$ such that $c + 2 \alpha > 0$, we have that $U'(x) \geq c$ for $x \geq x_0$. We find
\begin{align*}
 \int_{x_0}^{\infty} \left| U'(\xi) e^{-2 \gamma \xi - U(\xi)} \right| \, d \xi & = \int_{x_0}^{\infty} U'(\xi) e^{-2 \alpha \xi - U(\xi)} \, d \xi \\
 & = e^{-U(x_0)} - 2 \alpha \int_0^{\infty} e^{-U(\xi) - 2 \alpha \xi} \, d \xi.
\end{align*}
If $\alpha \geq 0$, we have already succeeded in establishing~\eqref{eq:integral1}. Moreover, for any value of $\alpha \in \R$,
\begin{align*} \int_{x_0}^{\infty} e^{-U(\xi) - 2 \alpha \xi} \, d \xi & \leq \int_{x_0}^{\infty} e^{- U(x_0) - c(\xi-x_0) - 2 \alpha \xi} \, d \xi  < \infty,
\end{align*}
establishing~\eqref{eq:integral1},~\eqref{eq:integral2}.
This yields, for~\eqref{eq:integral3}, that 
\begin{align*}
 \int_0^{\infty} e^{-\alpha \xi - U(\xi)} |h(\xi)| \, d \xi \leq  \|h\|_{L^2(\nu)} \int_0^{\infty} e^{-2 \alpha \xi - U(\xi)} \, d \xi < \infty
\end{align*}
Finally, for~\eqref{eq:integral4}, 
\begin{align*}
 & \int_0^{\infty} |U'(\xi)| e^{-2 \alpha \xi - U(\xi)} \int_0^{\xi} e^{\alpha \eta} |h(\eta)| \, d \eta \, d \xi \\
 & = \int_0^{\infty} \int_{\eta}^{\infty} |U'(\xi)| e^{-2 \alpha \xi - U(\xi)}  \, d \xi  e^{\alpha \eta + U(\eta) } |h(\eta)| \, e^{-U(\eta)} d \eta,
\end{align*}
so it suffices (for this term) to check that 
\[ \eta \mapsto \int_{\eta}^{\infty} |U'(\xi)| e^{-2 \alpha \xi - U(\xi)} \, d \xi \,   e^{\alpha \eta + U(\eta) } \in L^2(\nu).\] 
We have for $\eta \geq x_0$, by partial integration,
\[ \int_{\eta}^{\infty} |U'(\xi)| e^{-2 \alpha \xi - U(\xi)} \, d \xi  e^{\alpha \eta + U(\eta) } = e^{- \alpha \eta} -2 \alpha \int_{\eta}^{\infty} e^{-2 \alpha \xi - U(\xi)} \, d \xi e^{\alpha \eta + U(\eta)}.\]
The first term has already been establised above to belong to $L^2(\nu)$, the second term can be estimated using $U(\xi) - U(\eta) \geq c(\xi-\eta)$ as
\begin{align*}\int_{\eta}^{\infty} e^{-2 \alpha \xi - U(\xi)} \, d \xi e^{\alpha \eta + U(\eta)} \leq \int_{\eta}^{\infty} e^{-2 \alpha \xi - c(\xi - \eta)} \, d \xi e^{\alpha \eta} = \frac{e^{-\alpha \eta}}{2 \alpha + c} < \infty. \end{align*}
\end{proof}

\subsection{Lemmas for the proof of Theorem~\ref{thm:spectralgap}}

The following two lemmas are instrumental in establishing a spectral gap, i.e., Theorem~\ref{thm:spectralgap}.

\begin{lemma}
\label{lem:no-finite-accumulation-point}
Suppose the assumptions of Theorem~\ref{thm:spectrum} are satisfied. Then $\i \beta \in \sigma(L)$ for $\beta \in \R$ if and only if $\beta = 0$.
In particular $\sigma(L)$ does not have an accumulation point at $\i \beta$ for $\beta \in \R$.
\end{lemma}
\begin{proof}
First note that by Assumption~\ref{ass:unimodal}, $U'(0) = 0$ and $x \mapsto U'(x) e^{-U(x)}$ is a probability density function on $[0,\infty)$.
We have
\[ \psi^+(\i \beta) = \int_0^{\infty} U'(\xi) e^{-2 \i \beta \xi - U(\xi)} \, d \xi  = \int_0^{\infty} U'(\xi) [ \cos (2 \beta \xi) - \i \sin(2 \beta \xi)] e^{- U(\xi)} \, d \xi,\] so that, by Jensen's inequality,
\begin{align*}
 |\psi^{+}(\i \beta)|^2 & = \left( \int_0^{\infty} U'(\xi) \cos(2 \beta \xi) e^{-U(\xi)} \, d \xi \right)^2 +  \left( \int_0^{\infty} U'(\xi) \sin(2 \beta \xi) e^{-U(\xi)} \, d \xi \right)^2 \\
 & \leq \int_0^{\infty} U'(\xi) [\cos^2(2 \beta \xi) + \sin^2(2 \beta \xi)] e^{-U(\xi)} \, d \xi = \int_0^{\infty} U'(\xi) e^{-U(\xi)} \, d \xi = 1,
 \end{align*}
with equality if and only if the integrands are constant, i.e., if and only if $\beta = 0$.
Similarly $|\psi^-(\i \beta)| \leq 1$ with equality if and only if $\beta = 0$. It follows that $Z(\i \beta) = 1 -\psi^+(\i \beta) \psi^-(\i \beta) =  0$ if and only if $\beta = 0$.
% Since $U(x) = U(-x)$, we have that $\psi^+(\gamma) = \psi^-(\gamma)$.
% Fix $\beta \in \R$ and write $\psi^+(\i \beta) = A + \i B$ with $A, B \in \R$.
% Then 
% \begin{align*}
% |A| = |\Re \psi^+(\i \beta) | & = \left| \Re \int_0^{\infty} e^{-2 \i \beta \xi} U'(\xi) e^{-U(\xi)} \, d \xi \right| \\
% & =  \left| \int_0^{\infty} \cos(2 \i \beta \xi) U'(\xi) e^{-U(\xi)} \, d \xi \right|  \\
% & \leq \int_0^{\infty} U'(\xi) e^{-U(\xi)} \, d \xi = 1
% \end{align*}
% with equality if and only if $\beta = 0$.
% Similarly
% \begin{align*}
% |B| := |\Im \psi^+(\i \beta) | & = \left| \int_0^{\infty} \sin(2 \i \beta \xi) U'(\xi) e^{-U(\xi)} \, d \xi \right|  \leq 1.
% \end{align*}
% Now $Z(\i \beta) = 1 - (A + B \i)^2$ and thus $\Re Z(\i \beta) = 1 - A^2 + B^2 \geq 1 - A^2 \geq 0$ with equality if and only if $\beta = 0$.

Now suppose there is $\beta \neq 0$ and a sequence $\gamma_n \in \sigma(L)$ such that $\lim_{n \rightarrow \infty} \gamma_n = \i \beta$. By continuity of $Z$ (Lemma~\ref{lem:psi}), it follows that $Z(\i \beta) = 0$, a contradiction.
Furthermore, since $0$ an eigenvalue, and the spectrum consists of isolated eigenvalues only, 0 can not be an accumulation point either.
\end{proof}

\begin{lemma}
\label{lem:no-infinite-accumulation-point}
Suppose the assumptions of Theorem~\ref{thm:spectrum} are satisfied. For every closed and bounded interval $[a,b] \subset \R$ there is a constant $C > 0$ such that
\[ |\psi^{\pm}(\alpha + \i \beta)| \leq \frac C {|\beta|} \quad \text{for all} \quad \alpha \in [a,b] \quad \text{and} \quad \beta \neq 0.\]
In particular, $\alpha + \i \beta \in \rho(L)$ for $ \alpha \in [a,b]$, $|\beta| > C$.
\end{lemma}

\begin{proof}
By partial integration, and using that by Assumption~\ref{ass:sobolev}, $|U''(\xi)| \leq c_1 + c_2 |U'(\xi)|^2$, for some constants $c_1, c_2 > 0$,
\begin{align*}
 | \psi^+(\alpha + \i \beta) | & = \left| \int_0^{\infty} U'(\xi) e^{-2 \alpha \xi - 2 \i \beta \xi - U(\xi)} \, d \xi \right| \\
 & = \left|  \frac 1 {2 \i \beta} \int_0^{\infty} [ U''(\xi) - 2 \alpha U'(\xi) - (U'(\xi))^2 ] e^{-2 \alpha \xi - 2 \i \beta \xi - U(\xi)} \, d \xi  \right| \\
 & \leq \frac 1 {2 |\beta|} \int_0^{\infty} |U''(\xi) - 2 \alpha U'(\xi) - (U'(\xi))^2| e^{-2 \alpha \xi - U(\xi)} \, d \xi \\
 & \leq \frac 1 {2 |\beta|} \int_0^{\infty} [c_1 +2 \alpha^2 + (c_2 + 3/2) (U'(\xi))^2] e^{-2 \alpha \xi - U(\xi)} \, d \xi,
\end{align*}
and the integral converges by an application of Lemma~\ref{lem:sobolev-classical}. It follows that, for $\alpha \in [a,b]$,
\[| \psi^+(\alpha + \i \beta) | \leq  \frac 1 {2 |\beta|} \int_0^{\infty} [c_1 +2 \max(|a|,|b|)^2 + (c_2 + 3/2) (U'(\xi))^2] e^{-2 a \xi - U(\xi)} \, d \xi =: C^+/|\beta|.\]
The result for $\psi^{-}$ follows analogously, yielding a constant $C^-$, and we may take $C$ as the maximum of the two constants obtained.
Thus for $|\beta| > C$, we have that
\[ |\psi^+(\alpha + \i \beta) \psi^-(\alpha + \i \beta) | \leq C^2/\beta^2 < 1,\]
so that $Z(\alpha + \i \beta) \neq 0$ and therefore $\alpha + \i \beta \in \rho(L)$.
\end{proof}

\subsection{A convolution estimate}
In the proof of Proposition~\ref{prop:resolvent-bound} we will make repeated use of the following lemma.
\begin{lemma}
	\label{lem:convolution}
	Suppose Assumption~\ref{ass:polynomial-tail} is satisfied. Let $\phi : \R \rightarrow \C$ be Lebesgue measurable and $\psi \in L^2(\nu)$. Furthermore assume that
	$\phi(x) =\psi(x) = 0$ for almost every $x < 0$. Let 
	\[ (\phi \star \psi)(x) := \int_{0}^{x} \phi(x-y) \psi(y) \, d y, \quad \text{and} \quad \zeta(x) := e^{U(x)}\int_x^{\infty} \phi(y-x) \psi(y) e^{-U(y)} \, d y,  \quad x \geq 0. \]
	Then
	\[ \| \phi \star \psi\|_{L^2(\nu)} \leq \left( \int_0^{\infty} |\phi(x)| e^{-C/2 - (m/2)|x|^p} \, d x \right) \|\psi\|_{L^2(\nu)},\]
	and
	\[ \| \zeta\|_{L^2(\nu)} \leq \left( \int_{0}^{\infty} |\phi(x)| e^{-C/2 -(m/2)|x|^p}\, d x \right) \| \psi\|_{L^2(\nu)}. \ \]
\end{lemma}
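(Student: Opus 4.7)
The strategy is to reduce both weighted norms on $[0,\infty)$ to ordinary $L^2(\R)$ norms via the substitution $f \mapsto f e^{-U/2}$, rewrite each quantity as a correlation of compactly-supported factors against $\psi$, and then invoke Young's convolution inequality $\|F * G\|_{L^2(\R)} \leq \|F\|_{L^1(\R)} \|G\|_{L^2(\R)}$. By Lemma~\ref{lem:polynomial-tail} we may, without loss of generality, assume Assumption~\ref{ass:polynomial-tail} holds with $M=0$; in particular, for $0 \leq y \leq x$,
\[ \tfrac{1}{2}(U(y) - U(x)) \leq -\tfrac{C}{2} - \tfrac{m}{2}|x-y|^p. \]

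\textbf{Bound for $\phi \star \psi$.} For $x \geq 0$, insert the weight $e^{-U(y)/2}$ on $\psi$ and compensate:
\[ (\phi \star \psi)(x)\, e^{-U(x)/2} \;=\; \int_0^x \phi(x-y)\,\bigl[\psi(y) e^{-U(y)/2}\bigr]\, e^{(U(y)-U(x))/2}\, dy. \]
Setting $\Phi(z) := |\phi(z)| e^{-C/2 - (m/2)z^p}\mathbf 1_{z \geq 0}$ and $\Psi(y) := |\psi(y)| e^{-U(y)/2}\mathbf 1_{y \geq 0}$, the tail bound above gives the pointwise dominance $|\phi \star \psi|(x) e^{-U(x)/2} \leq (\Phi * \Psi)(x)$ for $x \geq 0$, where $*$ is ordinary convolution on $\R$. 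Since $\|\phi \star \psi\|_{L^2(\nu)} = \||\phi \star \psi|\, e^{-U/2}\|_{L^2(\R)}$ and $\|\Psi\|_{L^2(\R)} = \|\psi\|_{L^2(\nu)}$, Young's inequality yields the stated bound.

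\textbf{Bound for $\zeta$.} Substituting $z = y - x$ and inserting the weight in the same way,
\[ \zeta(x)\, e^{-U(x)/2} \;=\; \int_0^\infty \phi(z)\, \bigl[\psi(x+z) e^{-U(x+z)/2}\bigr]\, e^{(U(x) - U(x+z))/2}\, dz. \]
The point here is to split $e^{U(x)/2 - U(x+z)}$ as $e^{(U(x)-U(x+z))/2}\cdot e^{-U(x+z)/2}$: the first factor is absorbed into $\Phi$ via the tail bound, the second into $\Psi$. Hence with the same $\Phi, \Psi$ as above,
\[ |\zeta(x)|\, e^{-U(x)/2} \;\leq\; \int_0^\infty \Phi(z)\, \Psi(x+z)\, dz, \qquad x \geq 0, \]
which is $(\widetilde\Phi * \Psi)(-\cdot)$ for $\widetilde\Phi(z) := \Phi(-z)$, or equivalently is handled directly by Minkowski's integral inequality: $\|\zeta\|_{L^2(\nu)} \leq \int_0^\infty \Phi(z) \|\Psi(\cdot + z)\|_{L^2(\R)}\, dz \leq \|\Phi\|_{L^1}\|\psi\|_{L^2(\nu)}$.

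\textbf{Main difficulty.} There is no real obstacle: the computation is a standard Young/Minkowski estimate in a weighted-$L^2$ setting. The one point that requires care is the symmetric splitting of the exponential weight $e^{U(x)/2 - U(x+z)}$ (and the corresponding one for $\phi \star \psi$) so that exactly half of the Gibbs weight pairs with $\psi$ to produce $\|\psi\|_{L^2(\nu)}$, while the other half combines with the tail estimate from Assumption~\ref{ass:polynomial-tail} to produce the factor $e^{-C/2 - (m/2)z^p}$ inside the $L^1$ norm of $\phi$.
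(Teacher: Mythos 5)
Your proof is correct and follows essentially the same route as the paper: reduce to $M=0$ via Lemma~\ref{lem:polynomial-tail}, absorb half the Gibbs weight into $\psi$ to produce $\hat\psi=\psi e^{-U/2}\in L^2(\R)$, use the tail bound to dominate the remaining exponential factor by $e^{-C/2-(m/2)|\cdot|^p}$ attached to $\phi$, and finish with Young's (equivalently Minkowski's) inequality.
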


\begin{proof}
	By Lemma~\ref{lem:polynomial-tail} we may assume without loss of generality that Assumption~\ref{ass:polynomial-tail} holds with $M = 0$, i.e., there are constants $C \leq 0$, $m > 0$ and $p > 1$ such that 
	\[U(y) \geq U(x) + C + m|x-y|^p \quad \text{for all $x, y$ for which  $0 \leq x \leq y$ or $y \leq x \leq 0$.}
	\]
	Write $\hat \psi(x) = e^{-U(x)/2} \psi(x)$, so that $\|\hat \psi\|_{L^2(\R)} = \|\psi\|_{L^2(\nu)}$.
	By Assumption~\ref{ass:polynomial-tail},
	\begin{align*}
	e^{-U(x)/2} \int_{0}^{x} \phi(x-y) \psi(y) \, d y
	& = e^{-U(x)/2} \int_{0}^{x} \phi(x-y) \hat \psi(y) e^{U(y)/2} \, d y \\
	& \leq \int_{0}^{x} |\phi(x-y)| e^{- C/2 - (m/2)|y-x|^p}|\hat \psi(y)|\, dy  \\
	& = (|\hat \phi|  \star |\hat \psi|)(x),
	\end{align*}
	where
	\[ \hat \phi(x) = \phi(x) e^{-C/2 - (m/2)|x|^p}.\]
	The first result of the lemma follows now from the convolutional inequality $\| f \,  \star \,g\|_{L^2(\R)} \leq \|f \|_{L^1(\R)} \|g \|_{L^2(\R)}$.
	Next
	\begin{align*} e^{-U(x)/2} \zeta(x) & = e^{U(x)/2} \int_x^{\infty} \phi(y-x) \hat \psi(y) e^{-U(y)/2} \, d y \\
	& \leq \int_x^{\infty} |\phi(y-x)| e^{-C/2 -(m/2) |x-y|^p/2} |\hat \psi(y)|  \, d y \\
	& = (|\tilde \phi| \star |\hat \psi|)(x),
	\end{align*}
	where
	\[ \tilde \phi(x) = \phi(-x) e^{-C/2 -(m/2)|x|^p},\]
	which yields the second statement after applying the convolutional inequality again.
\end{proof}

\end{document}